\numberwithin{equation}{section}
\font\tencyr=wncyr10 %scaled \magstephalf
\font\tencyi=wncyi10 %scaled \magstephalf
\font\tencysc=wncysc10 %scaled \magstephalf
\def\rus{\tencyr\cyracc}
\def\rusi{\tencyi\cyracc}
\def\rusc{\tencysc\cyracc}
\newenvironment{proof*}
{\noindent {\sl Proof.}\quad }{\hfill    $\square$}
\renewcommand{\@cite}[2]{[{{\bf #1}\if@tempswa , #2\fi}]}
\renewcommand{\@biblabel}[1]{[{\bf #1}]\hfill}
\newtheorem{thm}{Theorem}[section]
\newtheorem{lm}[thm]{Lemma}%[chapter]
\newtheorem{cl}[thm]{Corollary}%[chapter]
\newtheorem{prop}[thm]{Proposition}%[chapter]
\theoremstyle{remark}
\newtheorem{rmk}[thm]{Remark}
\theoremstyle{definition}
\newtheorem{ex}[thm]{Example}
\newcommand {\be}{{\mathfrak b}}
\newcommand {\g}{{\mathfrak g}}
\newcommand {\h}{{\mathfrak h}}
\newcommand {\ka}{{\mathfrak k}}
\newcommand {\te}{{\mathfrak t}}
\newcommand {\ut}{{\mathfrak u}}
\newcommand {\z}{{\mathfrak z}}
\newcommand {\gln}{{\mathfrak {gl}}_n}
\newcommand {\sln}{{\mathfrak {sl}}_n}
\newcommand {\sltn}{{\mathfrak {sl}}_{2n}}
\newcommand {\sltno}{{\mathfrak {sl}}_{2n+1}}
\newcommand {\slv}{{\mathfrak {sl}}(\BV)}
\newcommand {\spv}{{\mathfrak {sp}}(\BV)}
\newcommand {\spn}{{\mathfrak {sp}}_{2n}}
\newcommand {\sov}{{\mathfrak {so}}(\BV)}
\newcommand {\sone}{{\mathfrak {so}}_{2n}}
\newcommand {\son}{{\mathfrak {so}}_{n}}
\newcommand {\sfr}{\mathsf R}
\newcommand {\eus}{\EuScript}
\newcommand {\ap}{\alpha}
\newcommand {\lb}{\lambda}
\newcommand {\blb}{\boldsymbol{\lambda}}
\newcommand {\N}{{\mathcal N}}
\newcommand {\co}{{\mathcal O}}
\newcommand {\BV}{{\mathbb{V}}}
\newcommand {\BC}{{\mathbb C}}
\newcommand {\BZ}{{\mathbb Z}}
\newcommand {\ad}{{\mathrm{ad\,}}}
\newcommand {\hot}{{\mathsf{ht}}}
\newcommand {\Lie}{{\mathrm{Lie\,}}}
\newcommand {\Ker}{{\mathsf{Ker\,}}}
\newcommand {\Ima}{{\mathsf{Im\,}}}
\newcommand {\rk}{{\mathsf{rk}}}
\newcommand {\tri}{{\mathfrak{sl}}_2}
\newcommand {\GR}[2]{{\textrm{{\color{blue}\bf #1}}}_{#2}}
\newcommand {\GRt}[2]{{\color{blue}{\widetilde{\textrm{\bf #1}}}   }_{#2}   }
\newcommand {\ov}{\overline}
\newcommand {\un}{\underline}
\newcommand {\beq}{\begin{equation}}
\newcommand {\eeq}{\end{equation}}
\newcommand {\edva}{e^{\langle 2\rangle}}
\renewcommand{\le}{\leqslant}
\renewcommand{\ge}{\geqslant}
\renewcommand{\lg}{\langle}
\newcommand{\rg}{\rangle}
\font\Bbbfont=msbm10 scaled 1200%
\font\olala=msam10 scaled 1200%
\font\Bbbsmallfont=msbm8%
\def\varnothing{\hbox {\Bbbfont\char'077}}
\def\square{\hbox {\olala\char"03}}
\begin{document}
\setlength{\parskip}{2pt plus 4pt minus 0pt}
\hfill {\scriptsize January 22, 2021} 
\vskip1ex

\title[Nilpotent orbits and mixed gradings]{Nilpotent orbits and mixed gradings of semisimple Lie algebras}
\author{Dmitri I. Panyushev}
\address{
Institute for Information Transmission Problems of the R.A.S., 
\hfil\break\indent  
Bolshoi Karetnyi per.~19, Moscow 127051, Russia}
\email{panyushev@iitp.ru}
\thanks{This research was funded by RFBR, project {\rus N0} 20-01-00515.}
\keywords{Centraliser, weighted Dynkin diagram, grading, involution}
\subjclass[2010]{17B08, 17B70, 14L30}
\begin{abstract}
Let $\sigma$ be an involution of a complex semisimple Lie algebra $\g$ and $\g=\g_0\oplus\g_1$ the 
related $\BZ_2$-grading. We study relations between nilpotent $G_0$-orbits in $\g_0$ 
and the respective $G$-orbits in $\g$. If $e\in\g_0$ is nilpotent and $\{e,h,f\}\subset\g_0$ is an
$\tri$-triple, then the semisimple element $h$ yields a $\BZ$-grading of $\g$. Our main tool is the 
combined $\BZ\times\BZ_2$-grading of $\g$, which is called a {mixed grading}. We prove, in particular, 
that if $e_\sigma$ is a regular nilpotent element of $\g_0$, then the weighted Dynkin diagram of 
$e_\sigma$, $\eus D(e_\sigma)$, has only isolated zeros. 
It is also shown that if $G{\cdot}e_\sigma\cap\g_1\ne\varnothing$, then the Satake diagram of $\sigma$
has only isolated black nodes and these black nodes occur among the zeros of 
$\eus D(e_\sigma)$. Using mixed gradings related to $e_\sigma$, we define an inner involution
$\check\sigma$ such that $\sigma$ and $\check\sigma$ commute. 
Here we prove that the Satake diagrams for both $\check\sigma$ and $\sigma\check\sigma$ have isolated black nodes.
\end{abstract}
\maketitle

\section{Introduction}
Let $G$ be a complex semisimple algebraic group with $\Lie G=\g$, $\mathsf{Inv}(\g)$ the set of 
involutions of $\g$, and $\N\subset\g$ the set of nilpotent elements. If $\sigma\in \mathsf{Inv}(\g)$, then $\g=\g_0\oplus\g_1$ is the corresponding 
$\BZ_2$-grading, i.e., $\g_i=\g_i^{(\sigma)}$ is the $(-1)^i$-eigenspace of $\sigma$. Let $G_0$ denote 
the connected subgroup of $G$ with $\Lie G_0=\g_0$. Study of the (nilpotent) $G_0$-orbits in $\g_1$
is closely related to the study of the (nilpotent) $G$-orbits in $\g$ meeting $\g_1$. The corresponding 
theory has been developed in~\cite{kr71}. In this article, we look at the $\BZ_2$-graded situation from 
another angle and study properties of nilpotent $G$-orbits meeting $\g_0$. Roughly speaking, the 
general problem is two-fold:

(a) \quad describe the $G$-orbits in $\N$ that meet $\g_0$ and then the $G_0$-orbits in $G{\cdot}e\cap\g_0$;

(b) \quad given an orbit $G_0{\cdot}e\subset\g_0\cap\N$, determine the (properties of) 
orbit $G{\cdot}e$.
\\
The well-known classification of the nilpotent orbits in $\son$ or $\spn$ via partitions comprise 
a solution to (a) for the pairs $(\g,\g_0)=(\sln,\son)$ or $(\sltn,\spn)$, i.e., for the
outer involutions of $\sln$. Here we provide some general results related to (b) and their applications.

For $e\in \g_0\cap\N$, an $\tri$-triple containing $e$, say $\{e,h,f\}$, can be chosen in $\g_0$. 
Combining the $\BZ_2$-grading $\g=\g_0\oplus\g_1$ and $\BZ$-grading $\g=\bigoplus_{i\in\BZ}\g(i)$ 
determined by $h$ yields a $\BZ\times\BZ_2$-grading $\g=\bigoplus_{(i,j)\in \BZ\times\BZ_2}\g_j(i)$, 
which is called a {\it mixed grading\/} related to $(\sigma,e)$ or just a $(\sigma,e)$-{\it grading}.
Here $e\in\g_0(2)$ and $h\in\g_0(0)$. Let $e_\sigma$ denote a regular nilpotent element of 
$\g_0=\g_{0}^{(\sigma)}$. We prove that the {weighted Dynkin diagram\/} of $G{\cdot}e_\sigma$, 
$\eus D(e_\sigma)$, has only isolated zeros. 
Moreover, if $G{\cdot}e_\sigma\cap\g_1^{(\sigma')}\ne\varnothing$ for some 
$\sigma'\in \mathsf{Inv}(\g)$, then the Satake diagram of $\sigma'$, denoted $\mathsf{Sat}(\sigma')$, 
has only isolated black nodes ({\sl IBN} for short), and these black nodes occur among the zeros of 
$\eus D(e_\sigma)$. Let $\mathsf c(\g)$ be the Coxeter number of a simple Lie algebra $\g$.
We show the orbit $G{\cdot}e_\sigma\subset\N$ is even whenever $\mathsf c(\g)$ is even.
It is also proved that if $\g_0$ is semisimple and $e$ is distinguished in $\g_0$, 
then $e$ remains even in $\g$ and the reductive part of the centraliser $\g^e$ is toral (i.e., $e$ is {\it 
almost distinguished\/} in $\g$), see Section~\ref{sect:mixed}.

For a simple Lie algebra $\g$, let $\kappa(\g)$ denote the {maximal} number of 
pairwise disjoint nodes in the Dynkin diagram. If $\mathsf c(\g)$ is even (i.e., 
$\g\ne\mathfrak{sl}_{2n+1}$), then we prove that there is a unique, up to $G$-conjugation, {\bf inner} 
involution $\vartheta$ such that $\eus D(e_\vartheta)$ has $\kappa(\g)$  isolated zeros. This $\vartheta$ is characterised by the property that $\g_1^{(\vartheta)}$ contains a regular nilpotent element of $\g$.
Let $B$ be a Borel subgroup of $G$ and $U=(B,B)$. Using $\vartheta$, we also show that $B$ has a dense 
orbit in $\ut'=[\ut,\ut]$, where $\ut=\Lie U$, and  if $\co$ is the dense $B$-orbit in $\ut'$, then 
$G{\cdot}\co=G{\cdot}e_\vartheta$, see Section~\ref{sect:max_int_inv}. 

As in~\cite{divis}, we say that the orbit $G{\cdot}e\subset \N$ is {\it divisible}, if $\frac{1}{2}\eus D(e)$ 
is again a weighted Dynkin diagram. Then the respective nilpotent orbit is denoted by 
$G{\cdot}\edva$. 
There is an interesting link between mixed gradings and divisible $G$-orbits. If a $(\sigma,e)$-grading 
has the property that $\dim\g_0(0)=\dim\g_1(4)$, then we prove that $G{\cdot}e$ is divisible, $\g_0$ 
is semisimple, and  $\dim\g_0(4k+2)=\dim\g_1(4k+2)$ for all $k\in\BZ$. Moreover, both $e$ and $\edva$ 
are almost distinguished in $\g$ and all such instances are classified, see Section~\ref{sect:divis} for 
details. 

Given $\sigma\in\mathsf{Inv}(\g)$, we define $\Upsilon(\sigma)\in\mathsf{Inv}(\g)$, if $\sigma$ 
is not an inner involution of $\mathfrak{sl}_{2n+1}$. Constructing the map $\Upsilon$ depends on a 
choice of $e_\sigma\in \g_0^{(\sigma)}\cap\N$ and exploits the mixed grading related to $(\sigma, e_\sigma)$, 
%$\Upsilon: \mathsf{Inv}(\g)\to\mathsf{Inv}(\g)$ whenever $\g\ne\mathfrak{sl}_{2n+1}$, 
see Section~\ref{sect:new-invol}. Here $\check\sigma:=\Upsilon(\sigma)$ is always inner, and the involutions
$\sigma$ and $\check\sigma$ commute. Hence $\sigma\check\sigma\in\mathsf{Inv}(\g)$ 
and $\sigma$, $\sigma\check\sigma$ belong to the same connected component of the group
$\mathsf{Aut}(\g)$. The map $\Upsilon$ has the property that $e_\sigma\in \g_1^{(\check\sigma)}$ and 
$e_\sigma\in \g_1^{(\sigma\check\sigma)}$. This implies that the Satake diagrams
$\mathsf{Sat}(\check\sigma)$ and $\mathsf{Sat}(\sigma\check\sigma)$ have only {\sl IBN}.\/ We describe 
a method that allows us to compute (the conjugacy class of) $\check\sigma$ or $\sigma\check\sigma$.
We also
discuss the property that $\sigma$ and $\sigma\check\sigma$ are $G$-conjugate, and its connection
to the divisibility of $G{\cdot}e_\sigma$. The commuting involutions $\sigma$ and $\check\sigma$
provide a $\BZ_2\times\BZ_2$-grading of $\g$, and our construction based on a mixed grading yields
an explicit model for it.

\un{Main notation}. Let $\h$ be a fixed Cartan subalgebra of $\g$, $\Delta$ the root system of $(\g,\h)$, 
and $\Pi$ a set of simple roots. If $\gamma\in\Delta$, then $\g_\gamma$ is the root space in $\g$. 
Write $\g^x$ or $\z_\g(x)$ for the centraliser of $x\in\g$ in $\g$. More generally, 
$\z_\g(M)=\cap_{x\in M}\z_\g(x)$ for a subset $M$ of $\g$. If $G{\cdot}e\subset\N$ is a nonzero  
orbit, then $\eus D(e)$ is its weighted Dynkin diagram. A direct sum of Lie algebras is denoted by
`$\dotplus$'.
\\ \indent
Our man reference for algebraic groups and Lie algebras is~\cite{t41}.
We refer to~\cite{CM} for generalities on nilpotent elements (orbits) and their centralisers.

%%%%%%%%%%%%  Section 2 %%%%%%%%
\section{Preliminaries on nilpotent orbits and involutions} 
\label{sect:prelim}

\noindent
Suppose for a while that $\g$ is a reductive algebraic Lie algebra. We say that $x\in \g$ is {\it regular}, if 
$\dim\g^x=\rk(\g)$. The set of regular elements is denoted by $\g_{\sf reg}$.
Let $\N$ be the set of nilpotent elements of $\g$. For $e\in\N\setminus\{0\}$, let $\{e,h,f\}$ be an 
$\tri$-triple in $\g$. That is, $[h,e]=2e$, $[e,f]=h$, and $[h,f]=-2f$. The semisimple element $h$ is called 
a {\it characteristic\/} of $e$. W.l.o.g. one may assume that $h\in\h$ and $\ap(h)\ge 0$ for all $\ap\in\Pi$. 
By a celebrated result of Dynkin, one then has $\ap(h)\in\{0,1,2\}$~\cite[Theorem\,8.3]{Dy52}. The {\it weighted Dynkin diagram\/} of $G{\cdot}e$ (=\,of $e$), $\eus D(e)$,  is 
the Dynkin diagram of $\g$ equipped with labels $\{\ap(h)\}_{\ap\in\Pi}$. The {\it set of zeros\/} of 
$\eus D(e)$ is the subset $\Pi_0=\{\ap\in\Pi\mid \ap(h)=0\}$.

Let $\g=\bigoplus_{i\in\BZ}\g(i)$ be the $\BZ$-grading determined by $h$, i.e.,  
$\g(i)=\{v\in \g \mid [h,v]=iv\}$. Then $\g^e$ inherits this $\BZ$-grading and $\g(0)=\g^h$. 
%Whenever we wish to stress that $\g(i)$ is determined by $h$, we write $\g(h;i)$ for it.
Recall some standard definitions related to nilpotent elements and $\tri$-triples.  
A nonzero $e\in\N$ is said to be

\textbullet\quad  {\it even}, if the $h$-eigenvalues in $\g$ are even;
\\ \indent
\textbullet\quad  {\it distinguished}, if $\z_\g(e,h,f)$ is the centre of $\g$ (i.e.,
$\z_\g(e,h,f)=0$, if $\g$ is semisimple); 
\\ \indent 
\textbullet\quad  {\it almost distinguished}, if $\z_\g(e,h,f)$ is a toral Lie algebra (=\,Lie algebra of a torus);
\\ 
Write $\te_n$ for an $n$-dimensional toral Lie algebra. 
For $e\in\g_{\sf reg}\cap\N$, any $\tri$-triple $\{e,h,f\}$ is said to be {\it principal} (in $\g$).
Set $\g({\ge}j)=\bigoplus_{i\ge j}\g(i)$.  It is well known that 
\begin{itemize}  
\item[{\sf (i)}] \  $\ad e:\g(i)\to \g(i+2)$ is injective (resp. surjective) if $i\le -1$ (resp. $i\ge -1$). Hence
 $\g^e\subset \g({\ge} 0)$, $\dim\g^e=\dim\g(0)+\dim\g(1)=\dim\g^h+\dim\g(1)$, and $e$ is even if and only if $\dim\g^e=\dim\g^h$. Furthermore, $(\ad e)^i: \g(-i) \to \g(i)$ is bijective.
\item[\sf (ii)] \ a regular element is distinguished and a distinguished element is even~\cite[Theorem\,8.2.3]{CM}; however, there exist 
almost distinguished non-even elements.  \label{n-i}
\item[{\sf (iii)}] \ $\z_\g(e,h,f)=\g^e(0)$ is a maximal reductive subalgebra of $\g^e$~\cite[3.7]{CM}. We also 
write $\g^e_{\sf red}$ for it. Then $\dim\g^e_{\sf red}=\dim\g(0)-\dim\g(2)$ and $e$ is distinguished 
(resp. almost distinguished) if and only if $\g^e$ has no non-central semisimple elements (resp. is solvable).
\item[{\sf (iv)}] \ The nilradical of $\g^e$, $\g^e_{\sf nil}$, is contained in $\g({\ge} 1)$ and
$\dim\g^e_{\sf nil}=\dim\g(1)+\dim\g(2)$. 
\end{itemize}

\noindent From now on, $\g$ is assumed to be semisimple. 
Let $\sigma$ be an involution of $\g$ and $\g=\g_0\oplus\g_1$ the corresponding $\BZ_2$-grading.
Then $\g_0=\g^\sigma$ is reductive, but not necessarily semisimple. We also say that  $(\g,\g_0)$ is a 
{\it symmetric pair}. Whenever we wish to stress that $\g_i$ is defined via certain $\sigma\in\mathsf{Inv}(\g)$, especially when several involutions are being considered simultaneously, we write $\g_i^{(\sigma)}$ for it. We can also write $\g^\sigma$ in place of $\g^{(\sigma)}_0$.
\\ \indent
The centraliser  $\g^x$ is $\sigma$-stable for any $x\in \g_i$, hence $\g^x=\g^x_0\oplus\g^x_1$. It is 
known that
\\ \indent \textbullet \ \ if $x\in\g_1$, then $\dim G_0{\cdot}x=\frac{1}{2}\dim G{\cdot}x$; that is, 
$\dim\g_1-\dim\g_1^x=\dim\g_0-\dim\g_0^x$~\cite[Proposition\,5]{kr71}. 
\\ \indent \textbullet \ \ if $0\ne x\in \g_0$, then $\dim\g_0^x+\rk(\g)> \dim\g_1^x$~\cite[Theorem\,4.4]{ANT13}.
\\[.7ex]
It is well known that the real forms of $\g$ are represented by their {\it Satake diagrams} (see 
e.g.~\cite[Ch.\,4,\S\,4.3]{t41}) and there is a one-to-one correspondence between the real forms and 
$\BZ_2$-gradings of $\g$. Thereby, one associates the Satake diagram to an involution (symmetric 
pair), cf.~\cite{spr87}. A Satake diagram of $\sigma$, $\mathsf{Sat}(\sigma)$, is the Dynkin diagram of $\g$, with black 
and white nodes, where certain pairs of white nodes can be joined by an arrow. Let $x\in\g_1$ be a 
generic semisimple element. Then $\g^x_1$ is a toral subalgebra (a {\it Cartan subspace}\/
of $\g_1$) and $\mathsf{Sat}(\sigma)$ encodes the structure of $\g^x_0$. In particular, the subdiagram of
black nodes in $\mathsf{Sat}(\sigma)$ represents the Dynkin diagram of $[\g^x,\g^x]=[\g^x_0,\g^x_0]$, 
while the number of arrows equals $\dim(\g^x_0/[\g^x_0,\g^x_0])$. 
Some features of Satake diagrams in the setting of $\BZ_2$-gradings are discussed 
in~\cite[Sect.\,2]{BSM14}.  

\begin{ex}   \label{ex:max-rang}
As $G/G_0$ is a spherical homogeneous space,  $\dim\g_1=\dim (G/G_0)\le \dim B$. Hence
$\dim\g_0\ge \dim U$ and  
$\dim\g_1-\dim\g_0\le\rk(\g)$ for any $\sigma\in\mathsf{Inv}(\g)$. If $\dim\g_1-\dim\g_0=\rk(\g)$,
then $\sigma$ is said to be of {\it maximal rank}. (In~\cite{spr87}, such involutions are called {\it split}.)
Equivalently, $\g_1$ contains a Cartan subalgebra of $\g$. For any simple $\g$, there is a unique, up to 
$G$-conjugacy, involution of maximal rank, and we denote it by $\vartheta_{\sf max}$. In this case, 
$\g^x\cap \g_0^{(\vartheta_{\sf max})} =\{0\}$ for a generic $x\in\g_1^{(\vartheta_{\sf max})}$. Hence 
$\mathsf{Sat}(\vartheta_{\sf max})$ has neither black nodes nor arrows. Yet another characterisation is 
that $\vartheta_{\sf max}$
corresponds to a split real form of $\g$, see~\cite[Ch.\,4,\S\,4.4]{t41}.
\end{ex}

\begin{rmk}    \label{rem:Levon}
By a fundamental result of Antonyan, for any $\sigma\in\mathsf{Inv}(\g)$ and an $\tri$-triple 
$\{e,h,f\}\subset\g$, one has $G{\cdot}e\cap\g_1\ne\varnothing$ if and only if 
$G{\cdot}h\cap\g_1\ne\varnothing$, see~\cite[Theorem\,1]{an}. This readily implies that 
{$\sigma=\vartheta_{\sf max}$ if and only if $G{\cdot}x\cap\g_1\ne\varnothing$ for {\bf any} 
$x\in\g$ \ (\cite[Theorem\,2]{an}).} For arbitrary $\sigma$ and $e\in\N$, this means that 
$G{\cdot}e\cap\g_1\ne\varnothing$ if and only if {\sf (i)} the set of black nodes of $\mathsf{Sat}(\sigma)$ 
is contained in the set of zeros of $\eus D(e)$ and {\sf (ii)} $\ap(h)=\beta(h)$ whenever the nodes 
$\ap,\beta\in\Pi$ are joined by an arrow in $\mathsf{Sat}(\sigma)$.
\end{rmk}

%%%%%%%%%%%%  Section 3 %%%%%%%%
\section{Mixed gradings of semisimple Lie algebras} 
\label{sect:mixed}

\noindent
A {\it mixed grading\/} of $\g$ is a grading via the group $\BZ\times \BZ_2$. We consider only mixed 
gradings of a special form. Given $\sigma\in \mathsf{Inv}(\g)$ and a nonzero $e\in\g_0\cap\N$, take an 
$\tri$-triple $\{e,h,f\}$ in $\g_0$. Then we set $\g_j(i)=\{v\in \g_j \mid [h,v]=iv\}$ and consider the mixed 
grading
\beq   \label{eq:mixed}
    \g=\bigoplus_{j\in\BZ_2}\bigoplus_{i\in\BZ}\g_j(i)=\bigoplus_{(i,j)\in\BZ\times \BZ_2}\g_j(i) . 
\eeq
We say that~\eqref{eq:mixed} is a {\it mixed grading related to\/} $(\sigma, e)$ or just
a $(\sigma,e)$-{\it grading}. Here $e\in\g_0(2)$ and $f\in\g_0(-2)$.
For such a mixed grading, it follows from {\sf (i)} in Section~\ref{sect:prelim} that 
\beq            \label{eq:mix-inj-surj}
\left\{  \begin{array}{l}
   \ad e: \g_j(i)\to \g_j(i+2) \text{ is surjective for $j=0,1$ and $i\ge -1$}. \\
   \ad e: \g_j(i)\to \g_j(i+2) \text{ is injective for $j=0,1$ and $i\le -1$}. 
\end{array} \right .
\eeq
Letting $d_j(i)=\dim\g_j(i)$, we have $d_j(i)\ge d_j(i+2)$ for $i\ge -1$ and $d_j(i)=d_j(-i)$.
Below, we mostly consider {\bf even} nilpotent elements of $\g_0$.
However,  if $e\in \g_0$ is even in $\g_0$, then $e$ is not necessarily even in $\g$.

\begin{lm}   \label{lm:chet-nechet}
If\/ $\g$ is simple and $e$ is even in $\g_0$, then the $h$-eigenvalues in $\g_1$ are either all even or 
all odd.
\end{lm}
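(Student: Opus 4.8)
The plan is to show that the set of $h$-eigenvalues occurring on $\g_1$ cannot contain both an even number and an odd number, using the irreducibility of the root system (since $\g$ is simple) together with the fact that $e$ even in $\g_0$ forces all $h$-eigenvalues on $\g_0$ to be even.

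The key idea I would pursue is a connectivity/parity argument on the set of $h$-eigenvalues. First I would record that because $e$ is even in $\g_0$, we have $\g_0(i)=0$ for all odd $i$, so all eigenvalues on $\g_0$ are even; the content of the lemma is thus entirely about the parity of eigenvalues on $\g_1$. Define the parity function $p\colon\g_j(i)\mapsto i\bmod 2$ and suppose for contradiction that $\g_1$ contains both an even eigenspace $\g_1(a)$ with $a$ even and an odd eigenspace $\g_1(b)$ with $b$ odd, both nonzero. I would then exploit the bracket structure of the mixed grading: since $[\g_1(i),\g_1(i')]\subseteq\g_0(i+i')$ and $[\g_0(i),\g_1(i')]\subseteq\g_1(i+i')$, and since all $\g_0$-eigenvalues are even, bracketing by $\g_0$ preserves the parity of the $\g_1$-eigenvalue, while bracketing two $\g_1$-pieces of the same parity lands in an even $\g_0$-piece (automatically consistent) and two $\g_1$-pieces of opposite parity would land in an odd $\g_0$-piece, which is zero. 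This suggests splitting $\g_1=\g_1^{\mathrm{ev}}\oplus\g_1^{\mathrm{odd}}$ according to eigenvalue parity.

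The crucial step is to convert this into a contradiction with simplicity. Set $\g^{\mathrm{ev}}=\g_0\oplus\g_1^{\mathrm{ev}}$ and $\g^{\mathrm{odd}}=\g_1^{\mathrm{odd}}$. I would check that this is a $\BZ_2$-grading of $\g$: namely $[\g^{\mathrm{ev}},\g^{\mathrm{ev}}]\subseteq\g^{\mathrm{ev}}$, $[\g^{\mathrm{ev}},\g^{\mathrm{odd}}]\subseteq\g^{\mathrm{odd}}$, and $[\g^{\mathrm{odd}},\g^{\mathrm{odd}}]\subseteq\g_0\subseteq\g^{\mathrm{ev}}$, where the last two inclusions use that all $\g_0$-eigenvalues are even so that the parity bookkeeping closes up. Thus $\g^{\mathrm{ev}}$ is a subalgebra and $\g^{\mathrm{odd}}$ is a $\g^{\mathrm{ev}}$-module, giving a genuine $\BZ_2$-grading (equivalently, an involution $\tau$ of $\g$ acting by $(-1)^{p}$ on eigenvalue-parity). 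Because $h\in\g_0(0)\subseteq\g^{\mathrm{ev}}$ and $e\in\g_0(2)\subseteq\g^{\mathrm{ev}}$, the whole $\tri$-triple lies in $\g^{\mathrm{ev}}$; in particular $h$ itself is an even element, and its $\BZ$-grading on $\g$ is the original one.

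The main obstacle, and where I expect the real work to lie, is ruling out the possibility that $\g_1^{\mathrm{odd}}$ is genuinely nonzero, i.e. showing this $\BZ_2$-grading must be trivial on the $\g_1$-part. For this I would use simplicity of $\g$ directly: the even/odd decomposition above would express $\g$ as a nontrivial $\BZ_2$-graded algebra in which $h$ is a central element of the even part acting with eigenvalues of both parities, and I would trace how the adjoint action of the principal $\tri$ (through $\g^{\mathrm{ev}}$) together with the connectivity of the Dynkin diagram forces the two parity-classes of $\g_1$-eigenvalues to be linked. Concretely, since $\g$ is simple, $\g_1$ is an irreducible $\g_0$-module (or a sum of at most two irreducibles, by the classification of symmetric pairs), and within a single irreducible $\g_0$-module all $h$-eigenvalues differ by the $h$-eigenvalues occurring in $\g_0$, which are all even; hence the eigenvalues on any $\g_0$-irreducible constituent of $\g_1$ are all of the same parity. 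The finishing argument is to show that $\g_1$ has at most one parity-class overall: if there were two, then $[\g_1^{\mathrm{ev}},\g_1^{\mathrm{odd}}]=0$ (landing in the vanishing odd part of $\g_0$) together with $[\g_0,\g_1^{\mathrm{odd}}]\subseteq\g_1^{\mathrm{odd}}$ would make $\g_0\oplus\g_1^{\mathrm{ev}}$ a proper nonzero ideal's complement, contradicting the simplicity of $\g$ via the standard fact that a symmetric pair of a simple $\g$ cannot decompose in this way. I would present the contradiction cleanly by noting $\g^{\mathrm{ev}}\oplus\g^{\mathrm{odd}}$ would exhibit $\g$ as an orthogonal direct sum incompatible with irreducibility of the adjoint representation, so one of $\g_1^{\mathrm{ev}}$, $\g_1^{\mathrm{odd}}$ must vanish, which is exactly the claim.
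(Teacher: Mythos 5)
Your first two steps reproduce the paper's proof exactly: split $\g_1=\g_1^{\mathrm{ev}}\oplus\g_1^{\mathrm{odd}}$ by eigenvalue parity, observe that evenness of $e$ in $\g_0$ gives $[\g_0,\g_1^{\pm}]\subseteq\g_1^{\pm}$ and $[\g_1^{\mathrm{ev}},\g_1^{\mathrm{odd}}]=0$ (this bracket has odd $h$-eigenvalues yet lies in $\g_0$, which has only even ones), and conclude that $\g^{\mathrm{ev}}=\g_0\oplus\g_1^{\mathrm{ev}}$, $\g^{\mathrm{odd}}=\g_1^{\mathrm{odd}}$ is a new $\BZ_2$-grading. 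The gap is in your finishing step. You claim that if both $\g_1^{\mathrm{ev}}$ and $\g_1^{\mathrm{odd}}$ were nonzero, then $\g=\g^{\mathrm{ev}}\oplus\g^{\mathrm{odd}}$ would exhibit $\g$ as a direct sum incompatible with irreducibility of the adjoint representation, i.e.\ you treat $\g_1^{\mathrm{odd}}$ as an ideal with complement $\g_0\oplus\g_1^{\mathrm{ev}}$. But $\g_1^{\mathrm{odd}}$ is not an ideal: $[\g_1^{\mathrm{odd}},\g_1^{\mathrm{odd}}]$ lands in $\g_0$, not in $\g_1^{\mathrm{odd}}$, and it need not vanish. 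A $\BZ_2$-grading of a simple Lie algebra is never a decomposition into ideals --- otherwise no simple Lie algebra would admit an involution at all, which is the premise of the entire lemma. So the contradiction you invoke does not exist as stated.

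The missing ingredient, which is exactly what the paper uses, is maximality: for a symmetric pair $(\g,\g_0)$ with $\g$ simple, $\g_0$ is a \emph{maximal} proper reductive subalgebra. Since $\g^{\mathrm{ev}}$ is the fixed-point algebra of an involution, it is reductive, so either $\g^{\mathrm{ev}}=\g_0$ (all eigenvalues in $\g_1$ odd) or $\g^{\mathrm{ev}}=\g$ (all even). Alternatively, your idea of using simplicity directly can be repaired: the subspace $I=\g_1^{\mathrm{odd}}\oplus[\g_1^{\mathrm{odd}},\g_1^{\mathrm{odd}}]$ \emph{is} an ideal of $\g$ (the Jacobi identity together with $[\g_1^{\mathrm{ev}},\g_1^{\mathrm{odd}}]=0$ and $[\g_0,\g_1^{\mathrm{odd}}]\subseteq\g_1^{\mathrm{odd}}$ gives all the required closures), so if $\g_1^{\mathrm{odd}}\ne 0$ then $I=\g$, and since $[\g_1^{\mathrm{odd}},\g_1^{\mathrm{odd}}]\subseteq\g_0$ this forces $\g_1^{\mathrm{ev}}=0$. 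Your other intermediate observation --- that eigenvalue parity is constant on each irreducible $\g_0$-constituent of $\g_1$ --- is also a viable route, but it needs the classification fact that $\g_1$ is either irreducible or a sum of two dual irreducibles, plus the remark that dual modules have negated (hence equal-parity) $h$-eigenvalues; you gesture at this but do not carry it out. As written, the proof is incomplete at its decisive step.
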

\begin{proof}
Write $\g_1=\g_1^+\oplus \g_1^-$, where the $h$-eigenvalues in $\g_1^+$ (resp. $\g_1^-$) are even 
(resp. odd). Since the $h$-eigenvalues in $\g_0$ are even, we have $[\g_0, \g_1^\pm]\subset \g_1^\pm$; 
also $[\g_1^+,\g_1^-]\subset \g_0$. On the other hand, the $h$-eigenvalues in $[\g_1^+,\g_1^-]$ are odd. 
This implies that $[\g_1^+,\g_1^-]=0$. Therefore, letting $\tilde\g_0=\g_0\oplus \g_1^+$ and 
$\tilde\g_1=\g_1^-$, we obtain another $\BZ_2$-grading of $\g$. As is well known, for a symmetric pair
$(\g,\g_0)$ with simple Lie algebra $\g$, $\g_0$ is a maximal proper reductive subalgebra.
Consequently, either $\tilde\g_0=\g_0$ or $\tilde\g_0=\g$.
\end{proof}

\begin{lm}         \label{lm:pochti-otm}
Suppose that $\g_0$ is semisimple and $e\in\g_0\cap\N$ is distinguished in $\g_0$. Then 
$e$ is almost distinguished and even in $\g$.
\end{lm}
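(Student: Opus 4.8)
The plan is to treat the two assertions separately, since almost-distinguishedness is essentially immediate, while the evenness carries the real content. I would begin with the former. By item {\sf (iii)} of Section~\ref{sect:prelim}, $\z_\g(e,h,f)=\g^e(0)=\g^e_0(0)\oplus\g^e_1(0)$ is a reductive subalgebra of $\g^e$, and its even part $\g^e_0(0)=\z_{\g_0}(e,h,f)$ is the centre of $\g_0$. As $\g_0$ is semisimple and $e$ is distinguished in $\g_0$, this centre vanishes, so $\z_\g(e,h,f)=\g^e_1(0)\subset\g_1$. Now $\z_\g(e,h,f)$ is a subalgebra of $\g$ lying entirely in $\g_1$; since $[\g_1,\g_1]\subset\g_0$, any such subalgebra satisfies $[\z_\g(e,h,f),\z_\g(e,h,f)]\subset\g_0\cap\g_1=0$ and is therefore abelian. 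A reductive abelian Lie algebra is toral, so $e$ is almost distinguished in $\g$ by definition.

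For the evenness, I first note that a distinguished element is even (item {\sf (ii)} of Section~\ref{sect:prelim}), so $e$ is even in $\g_0$; hence $\g_0$ carries only even $h$-eigenvalues and every odd $h$-eigenvalue of $\g$ must occur in $\g_1$. It thus suffices to show that $\g_1$ has no odd $h$-eigenvalue. Since both the hypothesis and the conclusion respect the decomposition of $\g$ into $\sigma$-stable sums of simple ideals, I would reduce to the case where the simple ideals of $\g$ form a single $\sigma$-orbit, of size $1$ or $2$: either (a) $\g$ is simple, or (b) $\g=\es\dotplus\es$ with $\sigma$ the swap of the two factors. In case (b), $\g_0$ is the diagonal copy $\cong\es$, and $e$ projects to a distinguished, hence even, element of each simple factor; therefore all $h$-eigenvalues of $\g$ are even. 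So the essential case is (a).

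With $\g$ simple and $\sigma\ne\mathrm{id}$ (otherwise $\g_1=0$ and there is nothing to prove), Lemma~\ref{lm:chet-nechet} applies, so the $h$-eigenvalues in $\g_1$ are either all even or all odd. If they are all even, then $e$ is even in $\g$ and we are done. The point is to exclude the all-odd alternative: if every $h$-eigenvalue of $\g_1$ were odd, then $\g_1(0)=0$, whence $\z_\g(e,h,f)=\g^e_1(0)\subseteq\g_1(0)=0$; thus $e$ would be distinguished, and hence even, in $\g$, forcing all $h$-eigenvalues of $\g_1$ to be even — impossible for $\g_1\ne 0$. This contradiction shows that the eigenvalues on $\g_1$ are all even, i.e. $e$ is even in $\g$.

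The main obstacle is precisely this last dichotomy. Lemma~\ref{lm:chet-nechet} only guarantees a uniform parity on $\g_1$, and ruling out the odd case is what genuinely uses that $e$ is distinguished in $\g_0$, via the identification $\z_\g(e,h,f)=\g^e_1(0)$ already obtained for the almost-distinguished part. The only routine-but-fiddly point I anticipate is the bookkeeping in the reduction to simple factors in case (b); everything else follows directly from the two cited facts of Section~\ref{sect:prelim} and Lemma~\ref{lm:chet-nechet}.
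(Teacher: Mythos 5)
Your proof is correct and takes essentially the same route as the paper's: the almost-distinguished part forces $\z_\g(e,h,f)\subset\g_1$ (a reductive abelian, hence toral, subalgebra), and evenness follows by excluding the all-odd alternative of Lemma~\ref{lm:chet-nechet} exactly as the paper does — if all $h$-eigenvalues on $\g_1$ were odd, then $\z_\g(e,h,f)=0$, so $e$ would be distinguished and hence even in $\g$, a contradiction with $\g_1\ne 0$. The only difference is your explicit reduction to $\g$ simple (treating the swap case $\es\dotplus\es$ separately): the paper applies Lemma~\ref{lm:chet-nechet} directly even though that lemma is stated only for simple $\g$ while the present statement assumes merely that $\g$ is semisimple, so your extra bookkeeping patches a small imprecision rather than constituting a different method.
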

\begin{proof}
The assumptions imply that $\z_\g(e,h,f)\cap \g_0=\{0\}$. Therefore,  
$\z_\g(e,h,f)\subset \g_1$. Then $\z_\g(e,h,f)$ is reductive and abelian, hence toral. 
\\ \indent
Since $e$ is even in $\g_0$, it follows from Lemma~\ref{lm:chet-nechet} that either $\g_1=\g_1^+$ or 
$\g_1=\g_1^-$.  Assume that $\g_1=\g_1^-$.  Since $(\g_1^-)^h=\{0\}$, we obtain
$\z_\g(e,h,f)=\z_{\g_0}(e,h,f)=0$. Therefore, $e$ is distinguished in $\g$ and hence even. This contradicts the fact that $\g_1^-$ is nontrivial. Thus, $\g_1=\g_1^+$ and  
$e$ is even in $\g$, although not necessarily distinguished.
\end{proof}

For $\g\in\{\slv,\sov,\spv\}$, the nilpotent orbits are represented by partitions of $\dim \BV$, 
see~\cite[Ch.\,5]{CM}. There are also simple algorithms for obtaining $\eus D(e)$
via $\blb=\blb(e)=\blb(G{\cdot}e)$, which go back to Springer and Steinberg~\cite[IV.4]{ss}, and 
here $G{\cdot}e\subset\g$ is even if and only if all parts of $\blb(e)$ have the same parity. 
\begin{ex}   
\label{ex:not-even}
a) \ If $\sigma\in\mathsf{Inv}(\mathfrak{sl}_{2n+1})$ is inner, then 
$\g_0=\mathfrak{sl}_k\dotplus\mathfrak{sl}_{2n+1-k}\dotplus\te_1$ with $k<2n+1-k$. If 
$e\in\g_{0,\sf reg}$, 
then $\blb(e)=(2n+1-k,k)$ and $e$ is {\bf not} even in $\g$. Here $\g_0$ is not semisimple and 
$\g_1=\g_1^-$. 

b) \ Another example is $\g=\spn$ and $\g_0=\mathfrak{sp}_{2k}\dotplus \mathfrak{sp}_{2n-2k}$ with
$0<k<n$.
If $e$ is regular in $\mathfrak{sp}_{2k}$, then  
$\blb(G{\cdot}e)=(2k,1,\dots,1)$, which means that $e$ is not even in $\g$.

c) \ It can happen that $\g_0$ is simple and $e\in \g_0$ is even, but $e$ is not even in $\g$. 
For instance, take $\sigma\in \mathsf{Inv}(\GR{F}{4})$ such that $(\GR{F}{4})^\sigma=\mathfrak{so}_9$.
Let $e\in \mathfrak{so}_9$ be such that $\blb(e)=(3,3,3)$.
Then $\eus D(e)$ is \ \raisebox{-.5ex}{\begin{tikzpicture}[scale= .6, transform shape]
\tikzstyle{every node}=[circle, draw, fill=orange!30]
\node (a) at (0,0) {\bf 1};
\node (b) at (1,0) {\bf 0};
\tikzstyle{every node}=[circle, draw, fill=white!55]
\node (c) at (2.5,0) {\bf 0};
\node (d) at (3.5,0) {\bf 2};
\foreach \from/\to in {a/b,  c/d}  \draw[-] (\from) -- (\to);
\draw (1.4, .07) -- +(.7,0);
\draw (1.4, -.07) -- +(.7,0);
\end{tikzpicture}}.
Here (and in Tables~\ref{table:odin},\,\ref{table:dva} below) the shaded nodes in the weighted Dynkin diagrams represent the {\bf short} simple 
roots. 
\end{ex}
Let $G(0)$ (resp. $G_0(0)$) be the connected subgroup of $G$ with Lie algebra $\g(0)$
(resp $\g_0(0)$).

\begin{lm}    \label{lm:finite-orb}
For any mixed grading of $\g$,  $G_0(0)$ has finitely many orbits in any $\g_j(i)$ if $i\ne 0$. 
Consequently, for $i\ne 0$, there is a dense $G_0(0)$-orbit in $\g_j(i)$ and hence $d_j(i)\le d_0(0)$.
\end{lm}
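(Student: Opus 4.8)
The plan is to exploit the $\tri$-theory governing the $\BZ$-grading $\g=\bigoplus_i\g(i)$ together with the observation that $G_0(0)$ acts on each homogeneous piece $\g_j(i)$. The key point is that $\g_0(0)=\g_0^h$ is the $h$-centraliser inside $\g_0$, so the $\tri$-triple $\{e,h,f\}\subset\g_0$ lies in $\g_0(0)\oplus\g_0(2)\oplus\g_0(-2)$, and the adjoint action of $\ad e$ respects the $\BZ_2$-grading. I would first recall the classical fact (due to the $\tri$-representation theory behind statement \textsf{(i)} of Section~\ref{sect:prelim}) that for the full $\BZ$-grading, $G(0)$ has finitely many orbits in each $\g(i)$ with $i\ne 0$. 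The finiteness for the mixed grading should be deduced by combining this with the $\sigma$-equivariance: $\sigma$ commutes with $\ad h$ (since $h\in\g_0$), hence preserves each $\g(i)$ and splits it as $\g(i)=\g_0(i)\oplus\g_1(i)$.

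The central mechanism I would invoke is the theory of $\theta$-groups (Vinberg theory) applied here to the $\BZ$-grading: the group $G(0)$ acting on $\g(i)$ is a visible/polar representation, and by the Vinberg--Kostant results the number of $G(0)$-orbits in the nilpotent cone of each graded piece is finite; for $i\ne 0$ all of $\g(i)$ consists of nilpotent elements of $\g$, so $G(0)$ has finitely many orbits in $\g(i)$ for $i\ne 0$. Alternatively, and more self-containedly, I would use that $\g(i)$ for $i\ne 0$ consists entirely of nilpotent elements, and by a theorem on reductive group actions with finitely many orbits on the graded components of an $\tri$-grading (essentially the statement that the $\tri$ attached to $e$ makes each $\g(i)$ a finite union of $G(0)$-orbits), finiteness holds. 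The task then reduces to descending this to the subgroup $G_0(0)\subset G(0)$ and the subspace $\g_j(i)\subset\g(i)$.

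For the descent I would argue as follows. Since $\sigma$ is an involution of $G(0)$ with fixed-point subgroup essentially $G_0(0)$ (up to connected components), and since each $G(0)$-orbit in $\g(i)$ is $\sigma$-stable when it meets $\g_j(i)$, a standard symmetric-space argument shows that a single $G(0)$-orbit in $\g(i)$ intersects $\g_j(i)$ in finitely many $G_0(0)$-orbits. This is precisely the finiteness principle underlying the Kostant--Rallis theory referenced via~\cite{kr71}: the fixed-point group of an involution acting on the ($\pm1$)-eigenspace of that involution inside a space with finitely many orbits under the ambient group again has finitely many orbits. Combining the two finiteness statements yields that $G_0(0)$ has finitely many orbits on $\g_j(i)$ for $i\ne 0$.

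The consequences are then immediate. Finitely many orbits on the irreducible variety $\g_j(i)$ forces one of them to be dense, so there is a dense $G_0(0)$-orbit in $\g_j(i)$; for that dense orbit $G_0(0){\cdot}x$ one has $\dim G_0(0){\cdot}x=d_j(i)$, and since an orbit dimension is bounded by $\dim G_0(0)=\dim\g_0(0)=d_0(0)$, we conclude $d_j(i)\le d_0(0)$. I expect the main obstacle to be making the finiteness of $G_0(0)$-orbits genuinely rigorous rather than merely plausible: the passage from $G(0)$-orbit finiteness to $G_0(0)$-orbit finiteness requires care about connected components of fixed-point groups and about whether every $G(0)$-orbit meeting $\g_j(i)$ really breaks into finitely many $G_0(0)$-orbits. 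The cleanest route is probably to realise the pair $(G(0),\g(i))$ as a $\theta$-representation and then apply the Kostant--Rallis finiteness directly to the induced involution, but verifying that the hypotheses of that machinery apply in this mixed-grading setting is where the real work lies.
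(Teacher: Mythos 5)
Your two-step strategy (finiteness of $G(0)$-orbits on $\g(i)$, then descent to $G_0(0)$-orbits on $\g_j(i)$) can be made to work, but as written it has a genuine gap exactly at the step you yourself flag as ``where the real work lies'': the descent. The tool you cite for it does not apply as stated. The Kostant--Rallis finiteness theorem in \cite{kr71} concerns nilpotent $K^\theta$-orbits in the $(-1)$-eigenspace of an involution $\theta$ acting on the Lie algebra of $K$ itself (the isotropy representation of a symmetric pair). Here $\g(i)$ is merely a $G(0)$-module equipped with a compatible involution $\sigma$; it is not (part of) the Lie algebra of $G(0)$, so \cite{kr71} gives you nothing directly, and the ``finiteness principle'' you formulate --- fixed-point group inherits orbit-finiteness on eigenspaces --- is not a theorem you can quote; it has to be proved.

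The missing ingredient is Vinberg's lemma \cite[\S\,2]{vi76} together with the tangent-space identity forced by the mixed grading, and this is precisely what the paper uses --- in a single step, with no intermediate passage through $G(0)$. For $x\in\g_j(i)$ one has $[\g_l(k),x]\subset\g_{l+j}(k+i)$, and the only pair $(k,l)$ landing back in $\g_j(i)$ is $(0,0)$; hence $[\g,x]\cap\g_j(i)=[\g_0(0),x]$, i.e., the tangent space to the $G$-orbit meets $\g_j(i)$ exactly in the tangent space to the $G_0(0)$-orbit. Vinberg's lemma then says that every $G$-orbit intersects $\g_j(i)$ in finitely many $G_0(0)$-orbits; since every element of $\g_j(i)$ with $i\ne 0$ is nilpotent (it is an $\ad h$-eigenvector with nonzero eigenvalue) and $\g$ has only finitely many nilpotent $G$-orbits, finiteness follows at once. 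Note that both of your steps are themselves instances of this same lemma: step one with $H=G$, $Y=\g(i)$, $H_0=G(0)$; step two with $H=G(0)$, $Y=\g_j(i)$, $H_0=G_0(0)$, where the relevant identity $[\g(0),x]\cap\g_j(i)=[\g_0(0),x]$ holds because $[\g_1(0),x]\subset\g_{j+1}(i)$. So your route is viable, but only after you supply exactly the lemma the paper invokes once; having it, the two-step detour is unnecessary. Your final paragraph (dense orbit on the irreducible variety $\g_j(i)$ and the bound $d_j(i)\le\dim G_0(0)=d_0(0)$) is correct and agrees with the paper.
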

\begin{proof}
The presence of the grading shows that  $[\g,x] \cap \g_j(i)=[\g_0(0),x]$ for any $x\in \dim\g_j(i)$.
By Vinberg's lemma~\cite[\S\,2]{vi76}, this implies that the intersection of any $G$-orbit with
$\g_j(i)$ consists of finitely many $G_0(0)$-orbits.  For $i\ne 0$,
all elements of $\g_j(i)$ are nilpotent. Therefore,
there are finitely many (nilpotent) $G$-orbits meeting $\g_j(i)$ with $i\ne 0$.  
\end{proof}

\begin{thm}  \label{prop:reg-in-g0}
Suppose that $\g$ is simple, $e\in\g_{0,{\sf reg}}\cap\N$, and $\{e,h,f\}\subset \g_0$ is a 
($\g_0$-principal) $\tri$-triple. Then
\begin{enumerate}
\item  \ \label{item-1} $[\g^h,\g^h]\simeq \tri\dotplus\dots\dotplus \tri=(\tri)^{k}$ for some $k\ge 0$
and $\eus D(e)$ has only \emph{isolated}\/ zeros. More precisely, here
$\dim\g^h= \rk(\g)+2k$ and $\eus D(e)$ contains exactly $k$ isolated zeros; 
\item \ \label{item-2} $\dim\g^e_{\sf nil}\le 2\rk(\g_0)$, and if the equality holds, then $\g_0$ is semisimple.
\end{enumerate}
\end{thm}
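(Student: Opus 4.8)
The plan is to extract everything from the reductive Lie algebra $\g^h=\g(0)$ equipped with the restriction of $\sigma$, and to finish part~\eqref{item-2} by a dimension count. First I record what regularity forces. Since $e$ is regular in $\g_0$, it is distinguished, hence even, in $\g_0$; thus $\g_0(i)=0$ for odd $i$, and the characteristic $h$ is regular semisimple in $[\g_0,\g_0]$. Consequently $\g_0(0)=\z_{\g_0}(h)=\h_0$ is a Cartan subalgebra of $\g_0$, so $\dim\g_0(0)=\rk(\g_0)$ and $\g_0(0)$ is abelian. Because $h\in\g_0$, we have $\sigma(h)=h$, so $\g^h=\g(0)$ is $\sigma$-stable and reductive of rank $\rk(\g)$, and its fixed-point subalgebra is $(\g^h)^\sigma=\g^h_0=\g_0(0)=\h_0$, which is abelian.

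The core of part~\eqref{item-1} is the following assertion: if $\mathfrak{m}$ is a semisimple Lie algebra and $\tau$ an involution with $\mathfrak{m}^\tau$ abelian, then $\mathfrak{m}\simeq(\tri)^k$. Indeed, $\tau$ cannot swap two simple ideals, since the diagonally embedded fixed copy would be a non-abelian subalgebra of $\mathfrak{m}^\tau$; and $\tau$ cannot act as the identity on a simple ideal, since that ideal would then lie inside $\mathfrak{m}^\tau$. Hence $\tau$ restricts to a genuine involution with abelian fixed points on each simple ideal, and by the classification of involutions of simple Lie algebras the only such ideal is $\tri$ (with $\tri^\tau=\te_1$). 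I would apply this to $\mathfrak{m}=[\g^h,\g^h]$: its fixed points lie in $\g^h\cap\g_0=\h_0$ and are therefore abelian, giving $[\g^h,\g^h]\simeq(\tri)^k$. Since $\g^h$ is reductive of rank $\rk(\g)$ while $(\tri)^k$ has dimension $3k$ and rank $k$, the centre of $\g^h$ has dimension $\rk(\g)-k$ and $\dim\g^h=\rk(\g)+2k$. Finally, $\g^h=\g(0)$ is the Levi subalgebra whose root system has base $\Pi_0=\{\ap\in\Pi\mid\ap(h)=0\}$, so the Dynkin subdiagram of $\g$ on the vertex set $\Pi_0$ is that of $[\g^h,\g^h]=(\tri)^k$, namely $k$ pairwise non-adjacent nodes. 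Thus $\eus D(e)$ has exactly $k$ zeros and they are isolated. I expect this symmetric-pair step to be the main obstacle — the rest is formal — and the cleanest justification is the classification of involutions of simple Lie algebras.

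For part~\eqref{item-2}, recall from~{\sf (iv)} of Section~\ref{sect:prelim} that $\dim\g^e_{\sf nil}=\dim\g(1)+\dim\g(2)$. Evenness of $e$ in $\g_0$ gives $\g_0(1)=0$, so $\dim\g(1)=\dim\g_1(1)$, while $\dim\g(2)=\dim\g_0(2)+\dim\g_1(2)$. By Lemma~\ref{lm:chet-nechet} the $h$-eigenvalues in $\g_1$ are either all even or all odd, so at least one of $\g_1(1)$, $\g_1(2)$ vanishes; hence $\dim\g_1(1)+\dim\g_1(2)$ reduces to a single $d_1(i)$ with $i\in\{1,2\}$, which by Lemma~\ref{lm:finite-orb} is $\le d_0(0)=\rk(\g_0)$. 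Next I would compute $\dim\g_0(2)$: the map $\ad e\colon\g_0(0)\to\g_0(2)$ is surjective by~\eqref{eq:mix-inj-surj}, and its kernel is $\g_0^e(0)=\z_{\g_0}(e,h,f)=\z(\g_0)$ (a weight-zero vector killed by $e$ is also killed by $f$ by $\tri$-theory, and $e$ is distinguished in $\g_0$), whence $\dim\g_0(2)=\rk(\g_0)-\dim\z(\g_0)$. Combining,
\[
\dim\g^e_{\sf nil}=\dim\g_0(2)+\bigl(\dim\g_1(1)+\dim\g_1(2)\bigr)\le\bigl(\rk(\g_0)-\dim\z(\g_0)\bigr)+\rk(\g_0)=2\rk(\g_0)-\dim\z(\g_0).
\]
This is $\le 2\rk(\g_0)$, and equality forces $\dim\z(\g_0)=0$, i.e. $\g_0$ is semisimple, as claimed. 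As a consistency check, if $\g_0$ is semisimple then Lemma~\ref{lm:pochti-otm} places us in the ``all even'' case, where equality can indeed occur.
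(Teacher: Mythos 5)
Your proposal is correct and takes essentially the same route as the paper: part~(1) rests on the observation that $\sigma$ restricts to an involution of the semisimple algebra $[\g^h,\g^h]$ whose fixed-point subalgebra lies in the Cartan subalgebra $\g_0(0)$ of $\g_0$, forcing $[\g^h,\g^h]\simeq(\tri)^k$, and part~(2) uses the same identity $\dim\g^e_{\sf nil}=\dim\g(1)+\dim\g(2)$ together with Lemmas~\ref{lm:chet-nechet} and~\ref{lm:finite-orb}. The only (harmless) deviations are that you spell out the swap/pointwise-fixed/classification argument that the paper leaves implicit, and that you settle the equality case via the exact count $\dim\g_0(2)=\rk(\g_0)-\dim\z(\g_0)$ instead of the paper's inequality chain through $[\g_0,\g_0]$.
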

\begin{proof}  Consider a mixed grading related to $(\sigma,e)$.
\\ \indent
(1) The centraliser of $h$ in $\g$ is reductive and $\BZ_2$-graded:
$\g^h=\g_0(0)\oplus \g_1(0)$. Since $e$ is regular nilpotent in $\g_0$, $\g_0(0)$ is a Cartan subalgebra of 
$\g_0$, and therefore $[\g^h,\g^h]_0$ is commutative. For a semisimple Lie algebra $[\g^h,\g^h]$, this is 
only possible if all its simple factors are isomorphic to $\tri$. Each simple factor $\tri$ of $\g^h$ 
corresponds to a simple root, and the corresponding set of $k$ simple roots gives rise to a totally 
disconnected subset of the Dynkin diagram.  Recall that the set of zeros of $\eus D(e)$
represent the Dynkin diagram of $[\g^h,\g^h]$.  The rest is clear.

(2) Recall that $\dim\g^e_{\sf nil}=\dim\g(1)+\dim\g(2)$. By Lemma~\ref{lm:chet-nechet},
either $\g_1=\g_1^+$ or $\g_1=\g_1^-$.
\\
If $\g_1=\g_1^+$, then
$\g(1)=0$, and $\dim\g^e_{\sf nil}=\dim\g_0(2)+\dim\g_1(2)\le 2\dim\g_0(0)=2\rk(\g_0)$.
\\
If $\g_1=\g_1^-$, then $\g(1)=\g_1(1)$ and $\g(2)=\g_0(2)$, with the similar estimate.

In both cases, if $\dim\g^e_{\sf nil}= 2\rk(\g_0)$, then $\dim\g_0(2)=\dim\g_0(0)$. Since $\{e,h,f\}\subset [\g_0,\g_0]$
and $\g_0(i)\subset [\g_0,\g_0]$ for $i\ne 0$, we have
\[
   \dim \g_0(2)=\dim [\g_0,\g_0](2)\le \dim[\g_0,\g_0](0)\le \dim \g_0(0) .
\]
Hence $[\g_0,\g_0](0)= \g_0(0)$ and therefore $[\g_0,\g_0]=\g_0$, i.e., $\g_0$ is semisimple.
\end{proof}

{\bf Remark.} It is proved by Broer~\cite{br94} that if $e$ is even, $\eus D(e)$ has isolated zeros, and 
the set of zeros $\Pi_0$ consists of short roots, then the closure of ${G{\cdot}e}$ is normal. (If $\g$ 
is simply-laced, then all roots are assumed to be short.)

In Theorem~\ref{prop:reg-in-g0}, $e\in\g_{0,\sf reg}$ is not necessarily even in $\g$, and we 
characterise below possible 
exceptions. Write $\mathsf c(\g)$ for the {\it Coxeter number\/} of $\g$.  Let $\Delta^+$ be the set of 
positive roots corresponding to $\Pi$ and $\theta\in\Delta^+$ the highest root. For $\gamma\in\Delta$ 
and $\ap\in\Pi$, let $[\gamma:\ap]$ be the coefficient of $\ap$ in the expression of $\gamma$ via $\Pi$. 
Then $\hot(\gamma):=\sum_{\ap\in\Pi}[\gamma:\ap]$ and $\mathsf c(\g)=\hot(\theta)+1$. 

\begin{prop}   \label{thm:cox-even}
Suppose that $\g$ is simple and $e\in\g_{0,{\sf reg}}\cap\N$. If\/ $\mathsf c(\g)$ is even, then 
$e$ is even in $\g$. Moreover, $e$ is not even in $\g$ if and only if\/ $\g=\mathfrak{sl}_{2n+1}$ and $\sigma$ is inner.
\end{prop}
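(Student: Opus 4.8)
The plan is to reduce evenness to a single parity condition, then reduce to the Hermitian case, and finally compute one eigenvalue. Since $e$ is regular, hence distinguished, in $\g_0$, it is even in $\g_0$ (as recalled in Section~\ref{sect:prelim}); Lemma~\ref{lm:chet-nechet} then forces the $h$-eigenvalues on $\g_1$ to be all even or all odd. Because the eigenvalues on $\g_0$ are even, $e$ is even in $\g$ precisely when those on $\g_1$ are even, so the entire statement is about the parity of the $h$-grading on $\g_1$.

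Next I would dispose of the case where $\g_0$ is semisimple: then $e$, being regular, is distinguished in $\g_0$, so Lemma~\ref{lm:pochti-otm} shows that $e$ is even in $\g$. Hence if $e$ is \emph{not} even, then $\g_0$ cannot be semisimple. For $\g$ simple this is exactly the situation of a Hermitian symmetric pair: $\sigma$ is inner, the centre $\z(\g_0)$ is one-dimensional, and a generator of $\z(\g_0)$ (with eigenvalues $0,\pm 1$) gives a cominuscule $\BZ$-grading $\g=\p^-\oplus\g_0\oplus\p^+$ with $\p^\pm$ abelian and irreducible over $\el:=[\g_0,\g_0]$. Here the $\tri$-triple $\{e,h,f\}$ lies in $\el$, and the highest root $\theta$ of $\g$ satisfies $\g_\theta\subset\p^+$ and is the highest weight of the $\el$-module $\p^+$.

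Now I would pin the parity to one eigenvalue. The $\ad h$-eigenvalues on $\p^+$ are the values $\mu(h)$ over the $\el$-weights $\mu$ of $\p^+$; since $e$ is even in $\el$, consecutive weights differ by $\el$-roots on which $h$ is even, so all these values are $\equiv\theta(h)\pmod 2$, with $\theta(h)$ the top one. As $\p^-$ carries the opposite weights and $\g_0$ is even, $e$ is even in $\g$ iff $\theta(h)$ is even. Everything thus comes down to the identity $\theta(h)=\mathsf c(\g)-2$: granting it, $e$ is even iff $\mathsf c(\g)$ is even, which proves the first assertion (together with the semisimple case) and shows $e$ is not even only when $\mathsf c(\g)$ is odd, i.e. only for $\g=\mathfrak{sl}_{2n+1}$ (the unique simple type with odd Coxeter number), necessarily with $\sigma$ inner. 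The converse is exactly Example~\ref{ex:not-even}(a): for inner $\sigma$ on $\mathfrak{sl}_{2n+1}$ one has $\blb(e)=(2n+1-k,k)$, whose two parts have opposite parity, so $e$ is not even.

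The crux, and the step I expect to be the main obstacle, is the identity $\theta(h)=\mathsf c(\g)-2$, equivalently $\sum_{\beta\in\Delta(\p^+)}\beta^\vee=\mathsf c(\g)\,\varpi_s^\vee$, where $\alpha_s$ is the cominuscule node and $\varpi_s^\vee$ the dual fundamental coweight. Part of this is structural: since $W_\el$ permutes the coroots $\{\beta^\vee:\beta\in\Delta(\p^+)\}$, the coweight $S:=\sum_{\beta\in\Delta(\p^+)}\beta^\vee$ is fixed by each $s_{\alpha_i}$ with $i\ne s$, hence $S=\alpha_s(S)\,\varpi_s^\vee$; moreover $h=2\rho^\vee-S$ with $2\rho^\vee$ the principal characteristic of $\g$, so $\theta(h)=2(\mathsf c(\g)-1)-\alpha_s(S)$ and it remains only to show $\alpha_s(S)=\mathsf c(\g)$. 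I would finish this by the short, finite classification of Hermitian symmetric pairs: for the classical pairs it is immediate from $\blb(e)$ (for instance $(n,n)$ for $(\spn,\gln)$ and $(2n-1,1,1)$ for $(\sono,\mathfrak{so}_{2n-1}\dotplus\te_1)$, both even), and for the two exceptional pairs one reads off the top eigenvalue of the principal $\tri$ of $\el$ on $\p^+$ ($\el=\mathfrak{so}_{10}$ on the half-spin $16$ gives $10=\mathsf c(E_6)-2$, and $\el=E_6$ on the $27$ gives $16=\mathsf c(E_7)-2$).
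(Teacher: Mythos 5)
Your argument is correct, and its skeleton is the same as the paper's: dispose of semisimple $\g_0$ via Lemma~\ref{lm:pochti-otm}, reduce evenness to a parity statement on $\g_1$ via Lemma~\ref{lm:chet-nechet}, identify the remaining case as the Hermitian (cominuscule) one, and reduce everything to the identity $\theta(h)=\mathsf c(\g)-2$. The difference is in how that identity is established. The paper proves it uniformly in two lines: $\g(1)=\p^+$ is an irreducible $\g_0$-module whose highest weight is $\theta$ and whose \emph{lowest} weight is the cominuscule simple root $\beta=\ap_s$; since the lowest $h$-eigenvalue on an irreducible $\el$-module is the negative of the highest one (the longest element of $W_\el$ sends the principal characteristic $h$ to $-h$), one gets $\beta(h)=-\theta(h)$, and then $\theta=\beta+\sum n_i\ap_i$ with $\ap_i(h)=2$ gives $\theta(h)=\sum n_i=\mathsf c(\g)-2$ with no case analysis. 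You instead make the structural reduction $S:=\sum_{\gamma\in\Delta(\p^+)}\gamma^\vee=\ap_s(S)\,\varpi_s^\vee$, $h=2\rho^\vee-S$, and then verify the remaining numerical fact (equivalently, evenness itself) case by case over the classification of Hermitian symmetric pairs -- which works, though your list leaves the two $\GR{D}{n}$ pairs implicit (partitions $(2n{-}3,1,1,1)$ and $(n,n)$, both fine). What the paper's route buys is exactly the ingredient that closes your "main obstacle" without classification; in fact, injected into your own framework it finishes uniformly: the lowest-weight symmetry gives $\ap_s(h)=-\theta(h)$, hence $\ap_s(S)=\ap_s(2\rho^\vee)-\ap_s(h)=2+\theta(h)$, and combined with your relation $\theta(h)=2(\mathsf c(\g)-1)-\ap_s(S)$ this yields $\ap_s(S)=\mathsf c(\g)$ and $\theta(h)=\mathsf c(\g)-2$ at once. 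Conversely, your reduction isolates the clean general identity $\sum_{\gamma\in\Delta(\p^+)}\gamma^\vee=\mathsf c(\g)\,\varpi_s^\vee$ for a cominuscule node, which is a nice reusable statement that the paper's formulation keeps implicit.
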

\begin{proof} 
%\\ \indent
{\sf (1)} \  If $\g_0$ is semisimple, then $e$ is even by Lemma~\ref{lm:pochti-otm}.
\\ \indent
{\sf (2)} \ If $\g_0$ is not semisimple,  then $\sigma$ is inner and $\g_0$ is a Levi subalgebra of a 
(maximal) parabolic subalgebra with abelian nilradical. Namely, there is $\beta\in\Pi$ such that 
${[\theta:\beta]=1}$ and the set of simple roots of $\g_0$ is $\Pi_0:=\Pi\setminus\{\beta\}$. Set 
$\Delta_\beta(i)=\{\gamma\in \Delta\mid [\gamma:\beta]=i\}$. Then 
$\Delta=\Delta_\beta(-1)\cup\Delta_\beta(0)\cup\Delta_\beta(1)$,
\[
\g_0=\h\oplus (\bigoplus_{\gamma\in\Delta_\beta(0)}\g_\gamma) \ \text{ and } \ 
\g_1=\bigoplus_{\gamma\in\Delta_\beta(-1)\cup\Delta_\beta(1)}\g_\gamma =\g(-1)\oplus\g(1) .
\]
Here $\g(1)$ is a simple $\g_0$-module, with the highest (resp. lowest) weight $\theta$ (resp. $\beta)$
w.r.t. $\Delta_\beta(0)^+=\Delta_\beta(0)\cap\Delta^+$. If $\theta=\beta+\sum_{\ap_i\in\Pi_0}n_i\ap_i$, 
then $\mathsf c(\g)=\sum n_i+2$. Let $\{e,h,f\}$ be a principal $\tri$-triple in $[\g_0,\g_0]\subset\g_0$ 
such that $h\in\h$ and $e=\sum_{\ap_i\in\Pi_0}e_{\ap_i}$. Then $\ap_i(h)=2$ for all $\ap_i\in\Pi_0$ and 
$\theta(h)=-\beta(h)$. It follows that $\theta(h)=\sum n_i=\mathsf c(\g)-2$. Thus, the eigenvalue 
$\theta(h)$ is even if and only if $\mathsf c(\g)$ is even. In this case all $h$-eigenvalues in $\g_1$ are 
even (Lemma~\ref{lm:chet-nechet}) and hence $e$ is even in $\g$. It remains to observe that
$\mathsf c(\g)$ is odd if and only if $\g=\mathfrak{sl}_{2n+1}$.
\end{proof}
Let $\kappa(\g)$ denote the maximal number of pairwise disjoint nodes in the Dynkin diagram of $\g$. 
By Theorem~\ref{prop:reg-in-g0}, if $\{e_\sigma,h_\sigma,f_\sigma\}$ is a principal $\tri$-triple in $\g_0=\g^\sigma$ 
for some $\sigma\in\mathsf{Inv}(\g)$, then $\dim\g^{h_\sigma} \le \rk(\g)+2\kappa(\g)$. We prove in 
Section~\ref{sect:max_int_inv} that, for $\g\ne \mathfrak{sl}_{2n+1}$, there is always an inner 
involution $\vartheta$ such that $\dim\g^{h_\vartheta} = \rk(\g)+2\kappa(\g)$  
and hence  $\eus D(e_\vartheta)$ has the maximal possible number of isolated zeros.

\begin{rmk}   \label{rmk:max-zeros}
It is readily seen that if $\g\ne \GR{D}{2n}$, then $\kappa(\g)=\left[\frac{\rk(\g)+1}{2}\right]$, while
$\kappa(\GR{D}{2n})=n+1$. A uniform but more fancy expression that can also be verified 
case-by-case is
\beq  \label{eq:kappa}
    \kappa(\g)= \#\{\gamma\in\Delta^+\mid \hot(\gamma)=
    \left[ (\mathsf{c}(\g)+1)/2 \right]=:a \}.     
\eeq
Note that $\bigoplus_{\gamma: \hot(\gamma)\ge a} \g_\gamma$ is an abelian ideal of the Borel 
subalgebra $\be=\h\oplus (\bigoplus_{\gamma\in\Delta^+}\g_\gamma)$. Using a result of Sommers 
related to the theory of {\sl ad}-nilpotent ideals of $\be$~\cite[Theorem\,6.4]{som05}, one obtains 
inequality  ``$\ge$'' in
\eqref{eq:kappa}. Moreover, if $\mathsf c(\g)$ is even, which only excludes  
$\g=\mathfrak{sl}_{2n+1}$, then I can give a case-free proof of~\eqref{eq:kappa}. 
\end{rmk}

Assume that $e\in\g_0=\g^\sigma$ is even in $\g$,  and let 
$\g=\bigoplus_{i,j}\g_j(2i)$ be a $(\sigma,e)$-grading. Recall that 
$d_j(i)=\dim\g_j(i)$,  $d_0(0)\ge d_j(i)$ for $i\ne 0$ (Lemma~\ref{lm:finite-orb}), 
and $d_j(i)\ge d_j(i+2)$ for $i\ge 0$, cf. Eq.~\eqref{eq:mix-inj-surj}. Consider the even integers
$m_j=\max\{k\mid d_j(k)\ne 0\}$ for $j=0,1$.

\begin{prop}    \label{prop:0=2}
Given $\sigma\in {\sf Inv}(\g)$ and a $(\sigma,e)$-grading of\/ $\g$, suppose that $d_0(0)=d_1(2)$. Then 
\begin{enumerate}
\item $G(0){\cdot}e\cap \g_1(2)\ne \varnothing$. In particular, $G{\cdot}e\cap\g_1\ne\varnothing$;
\item $e$ is almost distinguished in $\g$;
\item  $|m_0-m_1|\le 2$ and $d_0(0)\le d_1(0)$.
\end{enumerate}
\end{prop}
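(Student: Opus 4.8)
The plan is to prove part (1) first, since parts (2) and (3) will follow from it with little extra work. Throughout I use that $\{e,h,f\}$ is an $\tri$-triple in $\g_0\subseteq\g$, so $h$ is a characteristic of $e$ in $\g$ as well; as $e$ is even in $\g$, the $\BZ$-grading by $h$ is even and items {\sf (i)}, {\sf (iii)} of Section~\ref{sect:prelim} apply to $e$. First I would record that $G(0){\cdot}e$ is open dense in $\g(2)$: by {\sf (iii)} one has $\g^e(0)=\z_\g(e,h,f)$, so $\dim G(0){\cdot}e=\dim\g(0)-\dim\g^e(0)=\dim\g(0)-(\dim\g(0)-\dim\g(2))=\dim\g(2)$, and a single orbit of full dimension in the irreducible variety $\g(2)$ is open dense. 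On the other side, Lemma~\ref{lm:finite-orb} supplies a dense $G_0(0)$-orbit $\Omega\subseteq\g_1(2)$, and the hypothesis $d_1(2)=d_0(0)=\dim\g_0(0)=\dim G_0(0)$ forces $\dim\Omega=\dim G_0(0)$; hence the generic stabiliser is finite and $\z_{\g_0(0)}(x)=0$ for every $x\in\Omega$. Since $x\in\g_1(2)$, the map $\ad x$ sends $\g_0(0)$ into $\g_1(2)$; being injective between spaces of equal dimension $d_0(0)=d_1(2)$, it is an isomorphism $\ad x\colon\g_0(0)\isom\g_1(2)$.

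To finish (1) I would show $x\in G(0){\cdot}e$ by checking that $G(0){\cdot}x$ is again dense in $\g(2)$. Its tangent space is $[\g(0),x]=[\g_0(0),x]+[\g_1(0),x]=\g_1(2)+[\g_1(0),x]$ by the isomorphism just established, and since $[\g_1(0),x]\subseteq\g_0(2)$ this equals $\g(2)$ exactly when $\ad x\colon\g_1(0)\to\g_0(2)$ is surjective. Granting this, $G(0){\cdot}x$ is open dense, hence equals the unique open $G(0)$-orbit $G(0){\cdot}e$ ($G(0)$ being connected); thus $x\in G(0){\cdot}e\cap\g_1(2)$, and in particular $G{\cdot}e\cap\g_1\neq\varnothing$. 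The surjectivity $\ad x\colon\g_1(0)\twoheadrightarrow\g_0(2)$ is the one genuinely non-formal point, and I expect it to be the main obstacle: I would obtain it by completing $x$ to a normal $\tri$-triple whose characteristic is $h$ (the theory of nilpotent elements in a graded/symmetric setting, \cite{kr71,vi76}), for then {\sf (i)} gives $\ad x\colon\g(0)\twoheadrightarrow\g(2)$, and this surjection splits along the $\BZ_2$-grading into $\ad x\colon\g_0(0)\to\g_1(2)$ and $\ad x\colon\g_1(0)\to\g_0(2)$, both necessarily surjective.

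For (2), I would use (1) to pick $e'\in\g_1(2)\cap G(0){\cdot}e$ together with a completion $f'=g{\cdot}f$, where $g\in G(0)$ satisfies $g{\cdot}e=e'$ and $g{\cdot}h=h$. Then $\z_\g(e',h,f')=\g^{e'}(0)=\z_{\g(0)}(e')$ is $\sigma$-stable, and its $\g_0$-part is $\z_{\g_0(0)}(e')=0$; hence $\z_\g(e',h,f')\subseteq\g_1$. A reductive subalgebra contained in $\g_1$ is abelian, because $[\g_1,\g_1]\subseteq\g_0$ forces $[\z,\z]\subseteq\g_0\cap\g_1=0$, and is therefore toral. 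Since almost-distinguishedness depends only on $G{\cdot}e$, this shows $e$ is almost distinguished — exactly the mechanism of Lemma~\ref{lm:pochti-otm}.

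For (3), the inequality $d_0(0)\le d_1(0)$ is immediate from~\eqref{eq:mix-inj-surj}: $\ad e\colon\g_1(0)\to\g_1(2)$ is surjective, so $d_1(0)\ge d_1(2)=d_0(0)$. For $|m_0-m_1|\le2$ I would again replace $e$ by the conjugate $e'\in\g_1(2)$ from (1). Since $e'\in\g_1$, the operator $\ad e'$ shifts the $\BZ_2$-degree, so by {\sf (i)} the surjection $\ad e'\colon\g(i)\twoheadrightarrow\g(i+2)$ (for $i\ge-1$) restricts to surjections $\ad e'\colon\g_0(i)\twoheadrightarrow\g_1(i+2)$ and $\ad e'\colon\g_1(i)\twoheadrightarrow\g_0(i+2)$. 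Applying the first with $i=m_1-2\ge0$ to the nonzero top space $\g_1(m_1)$ forces $\g_0(m_1-2)\neq0$, i.e.\ $m_0\ge m_1-2$; the second, with $i=m_0-2$, forces $\g_1(m_0-2)\neq0$, i.e.\ $m_1\ge m_0-2$. Together these give $|m_0-m_1|\le2$, completing the proof.
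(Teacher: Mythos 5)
Your proposal follows essentially the same strategy as the paper's proof: Lemma~\ref{lm:finite-orb} gives a dense $G_0(0)$-orbit $\Omega\subset\g_1(2)$, the hypothesis $d_0(0)=d_1(2)$ forces $\z_{\g_0(0)}(x)=0$ for $x\in\Omega$, one then upgrades $x$ to an $\tri$-triple $\{x,h,f'\}$ with the \emph{same} characteristic $h$ and concludes $x\in G(0){\cdot}e$; your parts (2) and (3) (toral $\g^{x}_{\sf red}\subset\g_1(0)$; the parity-swapping surjections $\ad x\colon\g_j(i)\to\g_{j+1}(i+2)$ giving $|m_0-m_1|\le 2$, and $d_1(0)\ge d_1(2)=d_0(0)$) coincide with the paper's. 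Two steps differ. For the conclusion $x\in G(0){\cdot}e$, the paper cites Vinberg's theorem (\cite[Theorem\,1(4)]{vi79}): nilpotent elements of $\g(2)$ admitting $h$ as a characteristic form a single (dense) $G(0)$-orbit; your tangent-space computation $[\g(0),x]=\g(2)$ together with ``two open dense orbits of a connected group coincide'' is a valid, self-contained substitute. More importantly, the paper produces $f'$ by a Killing-form argument: $[\g_1(-2),x]$ is the orthogonal complement of $\z_{\g_0(0)}(x)$ inside $\g_0(0)$, so $\z_{\g_0(0)}(x)=0$ yields $h\in[\g_1(-2),x]$, i.e., $f'\in\g_1(-2)$ with $[x,f']=h$ exists outright.

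Your handling of that last step --- the one you yourself flag as the main obstacle --- has a gap as written, though a fillable one. The graded/symmetric Jacobson--Morozov theory of \cite{kr71,vi76,vi79} does \emph{not} produce a normal triple through $x$ ``whose characteristic is $h$''; it produces a normal triple $\{x,h'',f''\}$ with $h''\in\g_0(0)$ and $f''\in\g_1(-2)$, where in general $h''\ne h$. If the general theory alone gave characteristic $h$, part (1) would hold with no hypothesis on $d_0(0)$ and $d_1(2)$ at all, which is false: for $(\g,\g_0)=(\mathfrak{so}_{2n},\mathfrak{so}_{2n-1})$ and $e$ regular in $\g_0$ one has $\g_1(2)\ne 0$ but $G{\cdot}e\cap\g_1=\varnothing$, as noted right after the proposition. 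So the dimension hypothesis must re-enter precisely here, and it does: $h-h''\in\g_0(0)$ and $[h-h'',x]=2x-2x=0$, hence $h-h''\in\z_{\g_0(0)}(x)=0$, i.e., $h''=h$. Since you established $\z_{\g_0(0)}(x)=0$ beforehand, this single line closes the gap, after which the rest of your argument goes through.
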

\begin{proof}
 (1) \ For $e'\in\g_1(2)$, the space $[\g_1(-2),e']$ is the orthogonal complement of $\g^{e'}_0(0)$ in
$\g_0(0)$ w.r.t{.}~the Killing form. Let $\co$ be the dense $G_0(0)$-orbit in $\g_1(2)$. If $e'\in\co$, then $\g^{e'}_0(0)=\{0\}$
for the dimension reason. Hence $[\g_1(-2),e']=\g_0(0)\ni h$. That is, there is $f'\in \g_1(-2)$ such that 
$\{e',h,f'\}$ is an $\tri$-triple. Because $G(0)$ is the centraliser of $h$ in $G$, this also implies that
$e'\in G(0){\cdot}e$, see \cite[Theorem\,1(4)]{vi79}.

(2) Since both $\ad e': \g_0(0)\to \g_1(2)$ and $\ad e': \g_1(0)\to \g_0(2)$ are onto and $d_0(0)=d_1(2)$,
we see that $\g(0)^{e'}=\g^{e'}_{\sf red}\in \g_1(0)$. Hence $\g^{e'}_{\sf red}$ is a toral subalgebra, and 
so is $\g^{e}_{\sf red}$.

(3) Using the $\tri$-triple $\{e',h,f'\}$ with $e'\in\g_1(2)$, we see that 
$\ad e'$ takes $\g_j(i)$ to $\g_{j{+}1}(i{+}2)$ and $\ad e':\g({\ge} 0)\to \g({\ge} 2)$ is 
onto. 
Since $\g_0(m_0)$ and $\g_1(m_1)$ are in the range of $\ad e'$, one has 
$\g_0(m_1-2)\ne 0$ and $\g_1(m_0-2)\ne 0$, i.e., $|m_0-m_1|\le 2$. 
Note also that  $d_1(0)\ge d_1(2)=d_0(0)$.
\end{proof}

\noindent
The hypothesis of Proposition~\ref{prop:0=2} is rather restrictive. It means that the {\bf total} number 
of $\langle e,h,f\rangle$-modules in $\g_0$ equals the number of {\bf nontrivial} 
$\langle e,h,f\rangle$-modules in $\g_1$, i.e., roughly speaking, $\g_0$ cannot be much bigger than 
$\g_1$. Actually, assertions of Proposition~\ref{prop:0=2} fail, if $\g_0$ is considerably larger than $\g_1$. 
For instance, let $e\in\g_0$ be regular in $\g_0$ for $(\g,\g_0)=(\sone, \mathfrak{so}_{2n-1})$ with 
$n\ge 3$. Then $d_0(0)=n-1$, $d_1(0)=d_1(2)=1$,  $m_0=4n-6$, and $m_1=2n-2$. In this case, we 
also have $G{\cdot}e\cap\g_1=\varnothing$. The same conclusions hold for 
$(\mathfrak{so}_{2n+1},\mathfrak{so}_{2n})$ as well.

For $e\in\g_0$, it can happen that $G{\cdot}e\cap\g_1\ne\varnothing$, while 
$G(0){\cdot}e\cap\g_1(2)=\varnothing$. By the construction, we have $e\in\g_0(2)\subset\g(2)$ and 
$G(0){\cdot}e$ is the dense orbit in $\g(2)$. But this dense $G(0)$-orbit does not necessarily meet 
$\g_1(2)$. A simple possible reason for that is that $\g_0(2)$ contains a $1$-dimensional $G(0)$-module.

\begin{prop}    \label{prop:IBN}
Given $\sigma\in {\sf Inv}(\g)$ and $e\in \g_{0,{\sf reg}}\cap\N$, suppose that 
$G{\cdot}e\cap\g_1^{(\sigma')}\ne\varnothing$ for some $\sigma'\in\mathsf{Inv}(\g)$. Then the Satake 
diagram $\mathsf{Sat}(\sigma')$ has only \emph{isolated black nodes} ({\sl IBN} for short). Moreover, the 
set of black nodes of\/ $\mathsf{Sat}(\sigma')$ is contained in the set zeros of\/ $\eus D(e)$.
\end{prop}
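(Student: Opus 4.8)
The plan is to connect the hypothesis $G{\cdot}e\cap\g_1^{(\sigma')}\ne\varnothing$ to the structure of $\mathsf{Sat}(\sigma')$ via the characterisation recalled in Remark~\ref{rem:Levon}. That remark (Antonyan's theorem) tells us precisely that $G{\cdot}e\cap\g_1^{(\sigma')}\ne\varnothing$ holds if and only if (i) the black nodes of $\mathsf{Sat}(\sigma')$ lie among the zeros of $\eus D(e)$ and (ii) $\ap(h)=\beta(h)$ whenever $\ap,\beta$ are joined by an arrow in $\mathsf{Sat}(\sigma')$. The second assertion of our Proposition is therefore an immediate consequence of condition (i). So the only real content is the first assertion: that the black nodes are \emph{isolated}.

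First I would apply Theorem~\ref{prop:reg-in-g0}(\ref{item-1}) to the pair $(\sigma,e)$: since $e\in\g_{0,{\sf reg}}^{(\sigma)}\cap\N$ is regular in $\g^\sigma$, the set of zeros $\Pi_0$ of $\eus D(e)$ is a \emph{totally disconnected} subset of the Dynkin diagram, i.e. it consists of isolated nodes (the simple factors of $[\g^h,\g^h]$ are all copies of $\tri$). This is the key structural input. Next, invoking condition (i) from Remark~\ref{rem:Levon}, the set $\mathsf B$ of black nodes of $\mathsf{Sat}(\sigma')$ satisfies $\mathsf B\subseteq \Pi_0$. A subset of a totally disconnected set is itself totally disconnected, hence every black node of $\mathsf{Sat}(\sigma')$ is isolated. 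This gives \textbf{IBN} directly.

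I would then note that there is a subtlety to address: the usual convention is that ``isolated black node'' in a Satake diagram means a black node not joined to any \emph{other black node}, whereas the totally-disconnected conclusion gives the stronger statement that a node in $\Pi_0$ is not joined to \emph{any} node of $\Pi$ at all. The former is what we need, and it follows a fortiori. I should make this compatibility explicit so the reader sees that the stronger disconnectedness of $\Pi_0$ in the full Dynkin diagram subsumes the weaker IBN condition about black-black adjacencies.

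The main obstacle, if any, is bookkeeping rather than mathematics: one must make sure that the weighted Dynkin diagram $\eus D(e)$ and the Satake diagram $\mathsf{Sat}(\sigma')$ are both drawn on the \emph{same} underlying Dynkin diagram of $\g$ (with the same choice of $\Pi$), so that ``black nodes contained in the zeros'' is a statement about literally the same nodes, and so that ``isolated in $\Pi_0$'' transfers to ``isolated among black nodes.'' Once the identification of node sets is fixed, the argument is a one-line deduction: $\mathsf B\subseteq\Pi_0$ together with $\Pi_0$ totally disconnected forces the black nodes to be pairwise non-adjacent, which is exactly the IBN property, and the containment $\mathsf B\subseteq\Pi_0$ is the second claim verbatim.
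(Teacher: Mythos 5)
Your argument is correct, but it is genuinely different from the paper's. You reduce both assertions to Remark~\ref{rem:Levon}: condition {\sf (i)} there is literally the ``moreover'' claim, and combining it with Theorem~\ref{prop:reg-in-g0}(\ref{item-1}) (the zeros of $\eus D(e)$ are pairwise non-adjacent) gives {\sl IBN} at once, since a subset of a pairwise non-adjacent set of nodes is again pairwise non-adjacent. The paper's proof of the {\sl IBN} part does not invoke that remark at all: it takes $e'\in G{\cdot}e\cap\g_1^{(\sigma')}$, uses Kostant--Rallis to choose an adapted $\tri$-triple $\{e',h',f'\}$ with $h'\in\g_0^{(\sigma')}$ and $f'\in\g_1^{(\sigma')}$, observes that $\tilde h=e'+f'\in\g_1^{(\sigma')}$ is conjugate to the characteristic $h$, so that $[\g^{\tilde h},\g^{\tilde h}]\simeq(\tri)^{k}$ by Theorem~\ref{prop:reg-in-g0}(\ref{item-1}), and then uses closedness of the $G_0^{(\sigma')}$-orbit of $\tilde h$ together with Luna's slice theorem to conclude that the isotropy algebra of a \emph{generic} semisimple $x\in\g_1^{(\sigma')}$ embeds into a conjugate of $\z_{\g_0^{(\sigma')}}(\tilde h)$; hence $[\g^x,\g^x]\simeq(\tri)^{k'}$ with $k'\le k$, which is exactly {\sl IBN}. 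The trade-off: your route is a two-line deduction, but it outsources the substance to Remark~\ref{rem:Levon}, which the paper states without proof (it rests on Antonyan's theorem plus the standard description, via the Satake diagram, of which semisimple orbits meet $\g_1$); the paper's argument is independent of that criterion and yields slightly more (the bound $k'\le k$ on the number of black nodes, and the embedding of the generic stabiliser). Note also that the paper's written proof establishes only the {\sl IBN} claim, so for the ``moreover'' part it implicitly falls back on exactly the observation you make.

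One small correction: your ``subtlety'' paragraph misreads Theorem~\ref{prop:reg-in-g0}(\ref{item-1}). ``Isolated zeros'' means that no two zeros are adjacent, i.e.\ $\Pi_0$ is a totally disconnected \emph{subdiagram}; it does not mean that a node of $\Pi_0$ is joined to no node of $\Pi$ at all --- that is impossible in a connected Dynkin diagram with more than one node (in Table~\ref{table:odin}, every node labelled $0$ is adjacent to one labelled $2$). Fortunately your deduction only uses pairwise non-adjacency of the nodes of $\Pi_0$, so this slip does not affect the validity of the proof; but the sentence claiming the ``stronger'' statement should be deleted.
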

\begin{proof}  Set $\tilde\g_i=\g_{i}^{(\sigma')}$.
If $x\in\tilde\g_{1}$ is a generic semisimple element, then $\mathsf{Sat}(\sigma')$ has
{\sl IBN}\/ if and only if $[\g^x,\g^x]\simeq (\tri)^{k'}$ for some $k'$
(and then $\mathsf{Sat}(\sigma')$ has exactly $k'$ isolated black nodes). 

Take any $e'\in G{\cdot}e\cap\tilde\g_1$. There is an $\tri$-triple $\{e',h',f'\}$ such that 
$h'\in\tilde\g_0$ and $f'\in\tilde\g_1$~\cite{kr71}. Here $\tilde h=e'+f'$ is $SL_2$-conjugate to $h'$ in 
$\langle e',h',f'\rangle\simeq\tri$ and hence in $\g$. Since $e'\in G{\cdot}e$, one also has  
$h'\in G{\cdot}h$. Therefore $\tilde h\in G{\cdot}h$.
Since $[\g^h,\g^h]\simeq (\tri)^{k}$ for some $k$ 
(Theorem~\ref{prop:reg-in-g0}), we have thus detected a semisimple element 
$\tilde h\in\tilde\g_1$ such that
\[
     [\tilde\g^{\tilde h}_0,\tilde\g^{\tilde h}_0]\subset [\g^{\tilde h},\g^{\tilde h}]\simeq (\tri)^{k}.
\]
Since $\tilde h$ is semisimple, the $\tilde G_0$-orbit of $\tilde h$ is closed in $\tilde\g_1$.
For a generic semisimple $x\in\tilde\g_1$, it then follows from Luna's slice theorem~\cite[III.3]{luna} that 
the stabiliser  $\tilde\g^x_0$ 
is $\tilde G_0$-conjugate to a subalgebra of $\tilde\g^{\tilde h}_0$. Therefore, 
$[\g^x,\g^x]=[\tilde\g^x_0,\tilde\g^x_0]\simeq (\tri)^{k'}$ for some $k'\le k$.
\end{proof}

Combining Propositions~\ref{prop:0=2}(1) and \ref{prop:IBN}, we obtain
\begin{cl}    \label{cor:IBN}
Suppose that $e\in\g_{0,\sf reg}\cap\N$ and the $(\sigma,e)$-grading satisfies the condition that
$d_0(0)=d_1(2)$. Then $\mathsf{Sat}(\sigma)$ has only {\sl IBN} and the black nodes of\/ 
$\mathsf{Sat}(\sigma)$ are contained among the zeros of $\eus D(e)$.
\end{cl}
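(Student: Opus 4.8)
The plan is to read this corollary as a straightforward concatenation of the two preceding propositions, the only genuine idea being to feed the output of Proposition~\ref{prop:0=2}(1) into Proposition~\ref{prop:IBN} with the \emph{same} involution, i.e. taking $\sigma'=\sigma$. So the proof should be two lines of logic rather than any new computation.

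First I would invoke Proposition~\ref{prop:0=2}(1). Its hypothesis is precisely the standing assumption of the corollary, namely that the $(\sigma,e)$-grading satisfies $d_0(0)=d_1(2)$, so it yields $G(0){\cdot}e\cap\g_1(2)\ne\varnothing$ and in particular $G{\cdot}e\cap\g_1\ne\varnothing$. The point to emphasise is that here $\g_1=\g_1^{(\sigma)}$, so the orbit $G{\cdot}e$ meets the \emph{odd part of the very involution $\sigma$ under consideration}, not merely the odd part of some auxiliary involution.

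Next I would apply Proposition~\ref{prop:IBN} with $\sigma'=\sigma$. The hypotheses of that proposition are $e\in\g_{0,{\sf reg}}\cap\N$, which is assumed in the corollary, and $G{\cdot}e\cap\g_1^{(\sigma')}\ne\varnothing$, which is exactly the conclusion of the previous step once we set $\sigma'=\sigma$. Proposition~\ref{prop:IBN} then delivers both assertions at once: $\mathsf{Sat}(\sigma)$ has only \emph{IBN}, and the black nodes of $\mathsf{Sat}(\sigma)$ lie among the zeros of $\eus D(e)$.

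There is essentially no obstacle to overcome; the work has already been done in the two propositions. The single subtlety worth a word in the write-up is making sure the hypotheses align cleanly: the regularity $e\in\g_{0,{\sf reg}}$ is what licenses Proposition~\ref{prop:IBN} (which rests on $[\g^h,\g^h]\simeq(\tri)^{k}$ from Theorem~\ref{prop:reg-in-g0}), while the dimension equality $d_0(0)=d_1(2)$ is what licenses Proposition~\ref{prop:0=2}(1); since a $(\sigma,e)$-grading with these dimension labels is implicitly the even grading $\g=\bigoplus_{i,j}\g_j(2i)$, the evenness of $e$ needed to form that grading is already in force (and for $e$ regular it fails only in the excluded case $\g=\mathfrak{sl}_{2n+1}$, $\sigma$ inner, by Proposition~\ref{thm:cox-even}). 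With these observations noted, the chaining is immediate.
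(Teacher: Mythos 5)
Your proposal is correct and is exactly the paper's argument: the paper introduces this corollary with the phrase ``Combining Propositions~\ref{prop:0=2}(1) and \ref{prop:IBN}, we obtain,'' i.e.\ it feeds $G{\cdot}e\cap\g_1^{(\sigma)}\ne\varnothing$ from Proposition~\ref{prop:0=2}(1) into Proposition~\ref{prop:IBN} with $\sigma'=\sigma$, just as you do. Your extra remark on evenness (needed to write the grading as $\bigoplus_{i,j}\g_j(2i)$, and guaranteed outside the excluded $\mathfrak{sl}_{2n+1}$ inner case by Proposition~\ref{thm:cox-even}) is a harmless and accurate clarification of the standing assumption the paper makes just before Proposition~\ref{prop:0=2}.
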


The complete list of $\sigma\in\mathsf{Inv}(\g)$ such that $\g$ is simple and
$d_0(0)=d_1(2)$ for $e\in\g_{0,\sf reg}\cap\N$ is as follows. We point out the pairs $(\g,\g_0)$. 
\\ \indent
1) $\sigma=\vartheta_{\sf max}$ for $\g\ne \mathfrak{sp}_{4n+2}$, i.e.,
$(\sln,\son),(\mathfrak{so}_{2k},\mathfrak{so}_{k}\dotplus\mathfrak{so}_{k}),
(\mathfrak{so}_{2k+1},\mathfrak{so}_{k+1}\dotplus\mathfrak{so}_{k}), 
(\mathfrak{sp}_{4n},\mathfrak{gl}_{2n})$, $(\GR{E}{6},\GR{C}{4}), (\GR{E}{7},\GR{A}{7}), 
(\GR{E}{8},\GR{D}{8}), (\GR{F}{4},\GR{C}{3}\dotplus\GR{A}{1}), (\GR{G}{2},\GR{A}{1}\dotplus\GR{A}{1})$. 
\\ \indent
2) the others: \ $(\mathfrak{so}_{2k},\mathfrak{so}_{k+1}\dotplus\mathfrak{so}_{k-1}),\ 
(\mathfrak{so}_{4k+1}, \mathfrak{so}_{2k+2}\dotplus\mathfrak{so}_{2k-1}), \ 
(\GR{E}{6}, \GR{A}{5}\dotplus\GR{A}{1})$.

%%%%%%%%%%%%  Section 4  %%%%%%%%%
\section{A principal inner involution and the $B$-action on $[\ut,\ut]$}
\label{sect:max_int_inv}

\noindent
For a fixed choice of $\h\subset\g$ and $\Delta^+\subset\Delta=\Delta(\g,\h)$, 
the {\it principal\/}  $\BZ$-{\it grading\/} $\g=\bigoplus_{i\in\BZ}\g\lg i\rg$ is defined by the conditions that $\g\lg 0\rg=\h$ and $\g\lg i\rg=\bigoplus_{\gamma: \hot(\gamma)=i} \g_\gamma$ for $i\ne 0$. Then 
$\be=\g\lg {\ge}0\rg$ is a Borel subalgebra, $\ut=[\be,\be]=\g\lg {\ge}1\rg$, and $\ut'=\g\lg {\ge}2\rg$. Accordingly, we set 
$\Delta\lg i\rg:=\{\gamma\in\Delta\mid \hot(\gamma)=i\}$ for $i\ne 0$. In particular,
$\Delta\lg1\rg=\Pi$ is the set of simple roots in $\Delta^+$. (Alternatively, one can say that this 
$\BZ$-grading is determined by a principal $\tri$-triple.) 

For  $\gamma\in\Delta$, let  $e_\gamma\in\g_\gamma$ be a nonzero root vector. 
By a classical result of Kostant~\cite[Theorem\,5.3]{ko59}, a nilpotent element 
$v=\sum_{\gamma\in\Pi}c_\gamma e_\gamma\in\g\lg 1\rg$ is regular in $\g$ if and only if 
$c_\gamma\ne 0$ for all $\gamma\in\Pi$. In this
case,  the $B$-orbit $B{\cdot}v$ is dense in $\ut$.  Define the subspaces
\begin{equation}   \label{eq:max}
  \g_{\lg 0\rg}:=\bigoplus_{i\ \text{even}}\g\lg i\rg, \qquad \g_{\lg 1\rg}:=\bigoplus_{i\ \text{odd}}\g\lg i\rg .
\end{equation}
This provides the $\BZ_2$-grading $\g=\g_{\lg 0\rg}\oplus\g_{\lg 1\rg}$, %such that $\rk(\g)=\rk(\g_0)$. 
and the corresponding involution of $\g$ is denoted by $\vartheta$.
The $G$-conjugacy class of $\vartheta$ is uniquely determined by two properties (cf.~\cite[Theorem\,2.3]{theta05}: 

1)  $\vartheta$ {\sl is inner} (because $\rk(\g_{\lg 0\rg})=\rk(\g)$), \par  
2) $\g_{\lg 1\rg}=\g_1^{(\vartheta)}$ {\sl contains a regular nilpotent element of\/} $\g$ 
(because $v\in\g\lg 1\rg \subset \g_{\lg 1\rg}$).

\noindent
We say that $\vartheta$ is a {\it principal inner involution} (=\,{\sl PI\/}--{\it involution}). A 
{\sl PI\/}--involution of $\g$ is of maximal rank if and only if $\g\ne\GR{A}{n}, \GR{D}{2n+1},\GR{E}{6}$. 

Here $\be_{\lg 0\rg}=\be\cap\g_{\lg 0\rg}$ is a Borel subalgebra of $\g_{\lg 0\rg}$ and 
$\ut\cap\g_{\lg 0\rg}=\ut'\cap\g_{\lg 0\rg}$ is 
the nilradical of $\be_{\lg 0\rg}$. The roots system of $(\g_{\lg 0\rg},\h)$ is
$\Delta_{ev}:=\{\gamma\in \Delta \mid \hot(\gamma)\ \text{is even}\}$.
Clearly, $\Delta_{ev}^+:=\Delta_{ev}\cap \Delta^+$ is a set of positive roots in $\Delta_{ev}$,
and $\Delta\lg 2\rg$ is a part of the set of simple roots in $\Delta_{ev}^+$. In order to prove that
$\ut'$ contains a dense $B$-orbit, we first attempt to test 
$e=\sum_{\gamma\in\Delta\lg 2\rg }e_\gamma$. However, this does not always work.
If such an $e$ belongs to the dense $B$-orbit in $\ut'$, then 
$[\be,e]=\ut'$ and hence $[\be_{\lg 0\rg},e]=\ut'\cap\g_{\lg 0\rg}$. Therefore, $e$ must be
a regular nilpotent element of $\g_{\lg 0\rg}$ and $\Delta\lg 2\rg $ must be the whole set of simple
roots in $\Delta_{ev}^+$.
If  $\g$ is simple and $\rk (\g)=r$, then $\#\Delta\lg 2\rg=r-1$. 
Hence  $\Delta\lg 2\rg$ is a set of simple roots of $\Delta_{ev}$ if and only if
$\rk([\g_{\lg 0\rg},\g_{\lg 0\rg}])=r-1$, i.e., the centre of
$\g_{\lg 0\rg}$ is one-dimensional.

As is well known, if $(\g,\g_0)$ is a symmetric pair and $\g$ is simple, then either 
\begin{itemize}
\item[$(\mathfrak A)$]  \ $\g_0$ is semisimple (and $\g_1$ is a simple $\g_0$-module); or
\item[$(\mathfrak B)$]  \ $\g_0$ has a one-dimensional centre (and $\g_1$ is a  sum of two simple 
dual $\g_0$-modules).
\end{itemize}

\noindent 
For the {\sl PI\/}--involutions, both possibilities occur. Namely, $\g_{\lg 0\rg}$ is semisimple
{\sl if and only if\/} $\theta$ is fundamental {\sl if and only if\/} $\g$ is not of type $
\GR{A}{r}$ or $\GR{C}{r}$. These two possibilities are considered separately below.

\un{Case $(\mathfrak A)$}. 
Since $\g_{\lg 0\rg}$ is semisimple and $\vartheta$ is inner, we have $\rk(\Delta_{ev})=r$. Hence
there is a unique minimal root $\beta$, with $\hot(\beta)=2k\ge 4$,  
that is not contained in the linear span of $\Delta\lg 2\rg$. Then 
$\tilde\Pi:=\Delta\lg 2\rg\cup\{\beta\}$ is the set of simple roots in $\Delta_{ev}^+$.
(Actually, $\hot(\beta)=4$, see Remark~\ref{rmk:A}, but we do not need this now.)
Accordingly, $\tilde e:=e_\beta+\sum_{\gamma\in\Delta\lg 2\rg }e_\gamma=e_\beta+e$ is a regular nilpotent 
element of $\g_{\lg 0\rg}$.
\begin{thm}   \label{thm:A}
Suppose that $\g_{\lg 0\rg}=\g^{\vartheta}$ is semisimple. As  above,  let 
$\tilde e=e_\beta+e$
be a regular nilpotent element of\/ $\g_{\lg 0\rg}$. Then
\begin{itemize}
\item[\sf (i)] \ $\ut\cap\g^{\tilde e}=\g^{\tilde e}_{\sf nil}$ \ and \ $\dim\g^{\tilde e}_{\sf nil}=2r=2\,\rk(\g)$;
\item[{\sf (ii)}] \  the orbit $B{\cdot}\tilde e$ is dense in $\ut'$, \ $\be\cap\g^{\tilde e}=\ut\cap\g^{\tilde e}$,
and \ $\g^{\tilde e}\subset \g\lg {\ge}-1\rg$.
\end{itemize}
\end{thm}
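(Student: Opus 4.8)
The plan is to derive both parts from one comparison between the two gradings at hand: the principal $\BZ$-grading $\g=\bigoplus_i\g\lg i\rg$ that defines $\vartheta,\be,\ut,\ut'$, and the $\BZ$-grading $\g=\bigoplus_i\g(i)$ attached to the characteristic $\tilde h\in\h$ of the $\g_{\lg 0\rg}$-principal triple $\{\tilde e,\tilde h,\tilde f\}$. Write $r=\rk(\g)$. Since $\g_{\lg 0\rg}=\g^\vartheta$ is semisimple and $\tilde e$ is regular (hence distinguished) in it, Lemma~\ref{lm:pochti-otm} tells me that $\tilde e$ is even and almost distinguished in $\g$; thus $\g^{\tilde e}$ is $\tilde h$-graded with $\g^{\tilde e}_{\sf red}=\g^{\tilde e}(0)=\z_\g(\tilde e,\tilde h,\tilde f)$ toral and, as in the proof of Lemma~\ref{lm:pochti-otm}, contained in $\g_{\lg 1\rg}$, while $\g^{\tilde e}_{\sf nil}\subseteq\g({\ge}2)$ by {\sf (iv)} of Section~\ref{sect:prelim}. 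Theorem~\ref{prop:reg-in-g0}(2) supplies the upper bound $\dim\g^{\tilde e}_{\sf nil}\le 2r$, and since $\tilde e\in\g\lg 2\rg\oplus\g\lg 2k\rg\subseteq\ut'$ with $\ut'$ being $B$-stable, I have $B{\cdot}\tilde e\subseteq\ut'$ for free.

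The crux, which I would isolate as a lemma, is the inclusion of the Jacobson--Morozov parabolic of $\tilde e$ into the principal filtration:
\[
\g({\ge}0)=\g^{\tilde h}\oplus\g({\ge}2)\subseteq\g\lg {\ge}-1\rg,\qquad \g({\ge}2)\subseteq\g\lg {\ge}1\rg=\ut .
\]
The Levi piece is clean: by Theorem~\ref{prop:reg-in-g0}(1) the zeros $\Pi_0$ of $\eus D(\tilde e)$ are isolated simple roots, so the roots of $\g^{\tilde h}=\g(0)$ are exactly the $\pm\ap$ with $\ap\in\Pi_0$, all of principal height $\pm 1$, whence $\g(0)\subseteq\g\lg -1\rg\oplus\h\oplus\g\lg 1\rg$. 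The nilradical inclusion $\g({\ge}2)\subseteq\ut$ is, because all $\tilde h$-eigenvalues are even, equivalent to $\tilde h$ being $\g$-dominant, i.e. $\ap(\tilde h)\ge 0$ for every $\ap\in\Pi$. To see this I would use $\hot(\beta)=4$ (Remark~\ref{rmk:A}): as $\tilde\Pi=\Delta\lg 2\rg\cup\{\beta\}$ is a basis of $\h^*$ with $\hot(\gamma)=2$ on $\Delta\lg 2\rg$ and $\hot(\beta)=4$, equating the two gradings on every root space yields $\rho^\vee=\tilde h+2\tilde\varpi^\vee_\beta$, where $\tilde\varpi^\vee_\beta$ is the fundamental coweight of $\g_{\lg 0\rg}$ dual to $\beta$; dominance then reduces to the coordinate bound $\langle\ap,\tilde\varpi^\vee_\beta\rangle\le\tfrac12$ for $\ap\in\Pi$. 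This last bound, resting on the minimality of $\beta$ among even-height roots outside $\mathrm{span}(\Delta\lg 2\rg)$, is the one genuinely combinatorial point and is where I expect the real work; everything below it is formal.

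Granting the comparison, I get part {\sf (i)} and the Borel identity at once. Since $\g^{\tilde e}$ is $\tilde h$-graded, any $x\in\be\cap\g^{\tilde e}$ decomposes as $x=x^{(0)}+x^{(\ge 2)}$ with $x^{(0)}\in\g^{\tilde e}_{\sf red}$ and $x^{(\ge 2)}\in\g^{\tilde e}_{\sf nil}\subseteq\ut\subseteq\be$. Then $x^{(0)}=x-x^{(\ge 2)}\in\be=\g\lg {\ge}0\rg$, while the Levi step gives $x^{(0)}\in\g^{\tilde e}_{\sf red}\subseteq\g_{\lg 1\rg}\cap\g(0)\subseteq\g\lg -1\rg\oplus\g\lg 1\rg$; hence $x^{(0)}\in\g\lg 1\rg\subseteq\ut$. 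As $x^{(0)}$ is semisimple and $\ut$ consists of nilpotents, $x^{(0)}=0$, so $x=x^{(\ge 2)}\in\g^{\tilde e}_{\sf nil}$. This proves $\be\cap\g^{\tilde e}\subseteq\g^{\tilde e}_{\sf nil}$; the reverse inclusion is trivial ($\g^{\tilde e}_{\sf nil}\subseteq\ut\subseteq\be$), and sandwiching $\ut\cap\g^{\tilde e}$ between them gives
\[
\be\cap\g^{\tilde e}=\ut\cap\g^{\tilde e}=\g^{\tilde e}_{\sf nil}.
\]
Together with $\g^{\tilde e}\subseteq\g({\ge}0)\subseteq\g\lg {\ge}-1\rg$, this already yields the assertions $\ut\cap\g^{\tilde e}=\g^{\tilde e}_{\sf nil}$ of {\sf (i)}, and $\be\cap\g^{\tilde e}=\ut\cap\g^{\tilde e}$ and $\g^{\tilde e}\subseteq\g\lg {\ge}-1\rg$ of {\sf (ii)}.

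Finally I close with a dimension squeeze. Using the displayed identity,
\[
\dim B{\cdot}\tilde e=\dim\be-\dim(\be\cap\g^{\tilde e})=\dim\be-\dim\g^{\tilde e}_{\sf nil}\ge(|\Delta^+|+r)-2r=|\Delta^+|-r=\dim\ut',
\]
where the inequality is the bound $\dim\g^{\tilde e}_{\sf nil}\le 2r$. Since $B{\cdot}\tilde e\subseteq\ut'$ forces $\dim B{\cdot}\tilde e\le\dim\ut'$, both inequalities are equalities: $B{\cdot}\tilde e$ is dense in $\ut'$ and $\dim\g^{\tilde e}_{\sf nil}=2r$. This completes {\sf (i)} and the remaining density statement of {\sf (ii)}. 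It is worth noting that the Kostant-type density of $\tilde e$ inside $\g_{\lg 0\rg}$ is absorbed into this count and need not be invoked separately; the only step demanding genuine effort is the grading comparison of the second paragraph.
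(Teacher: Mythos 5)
Your argument hinges entirely on the comparison lemma $\g({\ge}2)\subseteq\ut$ and $\g(0)\subseteq\g\lg{-}1\rg\oplus\h\oplus\g\lg1\rg$, i.e.\ on the dominance of $\tilde h$ with respect to $\Pi$ --- and this is precisely the step you do not prove, deferring it as ``where I expect the real work''. That is a genuine gap, not a detail: as you note yourself, everything after it is formal, so the proof as written is incomplete at its one essential point. Moreover, your justification of the Levi inclusion is already circular: you assert that the roots of $\g^{\tilde h}$ are ``exactly the $\pm\ap$ with $\ap\in\Pi_0$'', but $\Pi_0$ is the set of zeros of $\eus D(\tilde e)$, which by definition refers to the \emph{dominant} Weyl conjugate of $\tilde h$. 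Theorem~\ref{prop:reg-in-g0}(1) only gives the isomorphism type $[\g^{\tilde h},\g^{\tilde h}]\simeq(\tri)^k$; the roots vanishing on the actual $\tilde h\in\h$ form $k$ mutually orthogonal pairs that need not be simple roots, nor of principal height $\pm1$, unless one already knows that $\tilde h$ is dominant --- which is the unproven claim. The dominance statement is true (it is in effect verified in Remark~\ref{rmk:A}), but its verification there rests on the explicit, case-dependent determination of $\beta$, namely $\hot(\beta)=4$ and the formulae \eqref{eq:beta1}--\eqref{eq:beta2}, followed by the interlacing computation of the labels.

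The paper's proof is designed to avoid exactly this. It never establishes dominance of $\tilde h$; instead it bounds $\dim[\ut,\tilde e]$ \emph{from above} using only two soft facts: the linear independence of $\tilde\Pi$ (so $[\h,\tilde e]=\g\lg2\rg\oplus\g_\beta$ has dimension $r$) and the minimality of $\beta$ outside the span of $\Delta\lg2\rg$ (so $\g_\beta\not\subset[\ut,\tilde e]$, whence $[\ut,\tilde e]\subseteq\g\lg{\ge}3\rg\ominus\g_\beta$ and $\dim[\ut,\tilde e]\le\dim\ut-2r$). Squeezing this against the lower bound $\dim[\ut,\tilde e]=\dim\ut-\dim(\ut\cap\g^{\tilde e})\ge\dim\ut-\dim\g^{\tilde e}_{\sf nil}\ge\dim\ut-2r$, coming from Lemma~\ref{lm:pochti-otm} and Theorem~\ref{prop:reg-in-g0}(2), forces equality everywhere, and all assertions of the theorem, including $[\be,\tilde e]=\ut'$ and $\g^{\tilde e}\subseteq\g\lg{\ge}{-}1\rg=\g\lg{\ge}2\rg^{\perp}$, drop out. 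Your closing dimension squeeze is similar in spirit, but it is conditioned on your unproven comparison lemma; if you want to salvage your route, you must actually prove the bound $\langle\ap,\tilde\varpi^\vee_\beta\rangle\le\tfrac12$ for all $\ap\in\Pi$, and the only available proof amounts to the case analysis of Remark~\ref{rmk:A} that the paper's argument deliberately sidesteps.
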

\begin{proof}  We have $[\be,\tilde e]\subset \ut'=\g\lg{\ge}2\rg$. 
Since the roots in $\tilde\Pi$ are linearly independent, 
\[
[\h,\tilde e]=\bigoplus_{\gamma\in\tilde\Pi} \g_{\gamma}=\g\lg 2\rg\oplus\g_\beta .
\]
By the very definition of $\beta$, the space $[\g\lg 2\rg, e]\subset \g\lg 4\rg$ does not contain $\g_\beta$.
Therefore,
$[\ut,\tilde e]\subset \g\lg{\ge}3\rg\ominus\g_\beta$. 
(The latter is the sum of all root spaces in $\g\lg{\ge}3\rg$ except $\g_\beta$.)
Hence, 
\beq   \label{eq:a}
     \dim[\ut,\tilde e]\le \dim\g\lg{\ge}3\rg-1=\dim\ut-2r .
\eeq 
On the other hand, $\dim\g^{\tilde e}_{\sf nil}\le 2r$ (Theorem~\ref{prop:reg-in-g0}) 
and $\ut\cap\g^{\tilde e} \subset \g^{\tilde e}_{\sf nil}$, since 
$\tilde e$ is almost distinguished in $\g$ by Lemma~\ref{lm:pochti-otm}. Hence
\beq   \label{eq:b} 
  \dim[\ut,\tilde e]= \dim\ut- \dim(\ut\cap\g^{\tilde e})\ge \dim\ut- \dim\g^{\tilde e}_{\sf nil}
\ge \dim\ut-2r .
\eeq 
Consequently, there are equalities everywhere in  \eqref{eq:a} and \eqref{eq:b}, which yields {\sf (i)}. One also has $[\be,\tilde e]=[\h,\tilde e]\oplus [\ut,\tilde e]=\g\lg{\ge}2\rg=\ut'$, i.e., $B{\cdot}\tilde e$ is dense 
in $\ut'$. Hence $\dim(\be\cap\g^{\tilde e})=2r$ and 
$\be\cap\g^{\tilde e}=\ut\cap\g^{\tilde e}$. Finally, $[\g,\tilde e]\supset \g\lg{\ge}2\rg$, hence 
$\g^{\tilde e} \subset \g\lg{\ge}2\rg^\perp=\g\lg {\ge}-1\rg$.
\end{proof}

\begin{cl}
The weighted Dynkin diagram $\eus D(\tilde e)$ contains  $\kappa(\g)$ isolated zeros.
\end{cl}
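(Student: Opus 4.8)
The plan is to combine the general upper bound on the number of isolated zeros with the sharp dimension estimate of Theorem~\ref{thm:A}(i), so that essentially no computation of $\eus D(\tilde e)$ is needed. Write $r=\rk(\g)$. First I would invoke Lemma~\ref{lm:pochti-otm}: since $\tilde e$ is regular in the semisimple algebra $\g_{\lg 0\rg}=\g^{\vartheta}$, it is even (and almost distinguished) in $\g$, so $\g(1)=0$ in its Dynkin grading and $\dim\g^{\tilde e}=\dim\g^{h_{\tilde e}}$. By Theorem~\ref{prop:reg-in-g0}(1), $\eus D(\tilde e)$ has exactly $k$ isolated zeros with $\dim\g^{h_{\tilde e}}=r+2k$; as these $k$ zeros are pairwise non-adjacent, $k\le\kappa(\g)$. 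The whole point is therefore the reverse inequality.

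For the lower bound I would use Theorem~\ref{thm:A}(i), namely $\dim\g^{\tilde e}_{\sf nil}=2r$. Since $\g^{\tilde e}_{\sf nil}$ is a subalgebra of $\g^{\tilde e}$,
\[
  2r=\dim\g^{\tilde e}_{\sf nil}\le\dim\g^{\tilde e}=\dim\g^{h_{\tilde e}}=r+2k,
\]
whence $k\ge r/2$ and, $k$ being an integer, $k\ge\lceil r/2\rceil$. By Remark~\ref{rmk:max-zeros}, $\kappa(\g)=[(r+1)/2]=\lceil r/2\rceil$ for every simple $\g\ne\GR{D}{2n}$, so in all these cases $k=\kappa(\g)$ at once.

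The only remaining case is $\g=\GR{D}{2n}$, where $\kappa(\g)=n+1$ while $\lceil r/2\rceil=n$, so the estimate above merely gives $k\ge n$. Here the {\sl PI\/}--involution is of maximal rank, $\g_{\lg 0\rg}=\mathfrak{so}_{2n}\dotplus\mathfrak{so}_{2n}$, and $\tilde e$ is regular in each factor, hence has partition $[(2n{-}1)^2,1^2]$ in $\mathfrak{so}_{4n}$. This partition has repeated parts, so $\tilde e$ is \emph{not} distinguished in $\g$; therefore $\g^{\tilde e}_{\sf red}\ne 0$, i.e. $2k-r=\dim\g^{\tilde e}-\dim\g^{\tilde e}_{\sf nil}=\dim\g^{\tilde e}_{\sf red}>0$, so $k>n$ and hence $k\ge n+1=\kappa(\g)$. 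Combined with $k\le\kappa(\g)$ this forces $k=\kappa(\g)$, proving the corollary. I expect this lone $\GR{D}{2n}$ case to be the only real obstacle: it is exactly where the generic bound $k\ge\lceil r/2\rceil$ falls one short of $\kappa(\g)$, and the extra input needed is the (elementary, partition-theoretic) fact that the regular nilpotent of $\mathfrak{so}_{2n}\dotplus\mathfrak{so}_{2n}$ is not distinguished in $\mathfrak{so}_{4n}$.
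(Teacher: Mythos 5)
Your proof is correct. Away from $\GR{D}{2n}$ it coincides with the paper's argument: both invoke Lemma~\ref{lm:pochti-otm} for evenness and then combine $\dim\g^{\tilde h}=\rk(\g)+2k$ (Theorem~\ref{prop:reg-in-g0}(1)) with $\dim\g^{\tilde e}_{\sf nil}=2\rk(\g)$ (Theorem~\ref{thm:A}(i)) to get $k\ge\lceil \rk(\g)/2\rceil=\kappa(\g)$; the paper phrases the middle step as $\dim\g(0)\ge\dim\g(2)$ where you use $\g^{\tilde e}_{\sf nil}\subseteq\g^{\tilde e}$ and $\dim\g^{\tilde e}=\dim\g^{\tilde h}$, which is the same bookkeeping. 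The genuine divergence is the case $\g=\GR{D}{2n}$, where this bound falls one short. The paper settles it by explicit computation: from $\blb(\tilde e)=(2n{-}1,2n{-}1,1,1)$ it reads off $\eus D(\tilde e)$ via the partition algorithm of \cite[5.3]{CM} and counts $n+1$ zeros directly. You instead argue structurally: the repeated parts show that $\tilde e$ is not distinguished in $\mathfrak{so}_{4n}$, hence $\dim\g^{\tilde e}_{\sf red}=\dim\g^{\tilde e}-\dim\g^{\tilde e}_{\sf nil}=2k-2n>0$, so $k\ge n+1$, and the a priori bound $k\le\kappa(\g)=n+1$ (the zeros being pairwise non-adjacent) forces equality. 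Both routes need the partition, but yours replaces the weighted-diagram recipe by the distinguishedness criterion (distinct odd parts) combined with the upper bound through $\kappa(\g)$, so it stays entirely within the dimension bookkeeping of the main case; it is also consistent with Table~\ref{table:dva}, where $\g^{e_\vartheta}_{\sf red}=\te_2$ for $\GR{D}{2n}$, i.e., $2k-\rk(\g)=2$ exactly. What the paper's computation buys in exchange is the explicit location of the zeros (the diagrams drawn at the end of its proof), not just their number.
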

\begin{proof}
By~Lemma~\ref{lm:pochti-otm}, $\tilde e$ is even in $\g$. Let $\tilde h\in\h$ be a characteristic of 
$\tilde e$ and $\g=\bigoplus_{i\in\BZ} \g(2i)$  the corresponding $\BZ$-grading of $\g$. Using 
Theorem~\ref{prop:reg-in-g0}(\ref{item-1}) with $h=\tilde h$, we obtain
\[
  \rk(\g)+2k=\dim\g^{\tilde h}=\dim\g(0)\ge \dim\g(2)=\dim \g^{\tilde e}_{\sf nil}=2\rk(\g),
\]
where $k$ is the number of zeros in $\eus D(\tilde e)$. Hence $k\ge \left[ \frac{\rk(\g)+1}{2}\right]$,
which proves the assertion for $\g\ne\GR{D}{2n}$ (cf. Remark~\ref{rmk:max-zeros}).
If $\g=\GR{D}{2n}$, then $\g_{\lg 0\rg}=\GR{D}{n}\dotplus\GR{D}{n}$ and 
$\blb(\tilde e)=(2n{-}1,2n{-}1,1,1)$. The description of $\eus D(\tilde e)$ via partitions~\cite[5.3]{CM} 
shows that the number of zeros equals $n+1$, as required. Cf. the diagrams for $\GR{D}{5}$ and 
$\GR{D}{6}$: 
 \quad  
\raisebox{-4ex}
{\begin{tikzpicture}[scale= .6, transform shape]
\tikzstyle{every node}=[circle, draw, fill=white!55]
%\node (a) at (0,0) {\bf 0};
\node (b) at (1.1,0) {\bf 2};
\node (c) at (2.2,0) {\bf 0};
\node (d) at (3.3,0) {\bf 2};
\node (e) at (4.4,0) {\bf 0};
\node (f) at (3.3,-1.1) {\bf 0};
\foreach \from/\to in {b/c, c/d, d/e, d/f}  \draw[-] (\from) -- (\to);
\end{tikzpicture}
}
\  and \  
\raisebox{-4ex}
{\begin{tikzpicture}[scale= .6, transform shape]
\tikzstyle{every node}=[circle, draw, fill=white!55]
\node (a) at (0,0) {\bf 0};
\node (b) at (1.1,0) {\bf 2};
\node (c) at (2.2,0) {\bf 0};
\node (d) at (3.3,0) {\bf 2};
\node (e) at (4.4,0) {\bf 0};
\node (f) at (3.3,-1.1) {\bf 0};
\foreach \from/\to in {a/b, b/c, c/d, d/e, d/f}  \draw[-] (\from) -- (\to);
\end{tikzpicture}
}.
\end{proof}

\begin{rmk}   \label{rmk:A}
If  $\g_{\lg 0\rg}=\g^\vartheta$ is semisimple,  
then, beside $\Delta\lg 2\rg $, one more root $\beta$ is needed for the basis of $\Delta_{ev}^+$.
For $\GR{D}{r}$ and $\GR{E}{r}$,  this $\beta$ is obtained as follows.
If $\delta\in\Pi$ corresponds to the branching node in the Dynkin diagram
and $\delta_1,\delta_2,\delta_3$ are the adjacent simple roots, then 
\beq    \label{eq:beta1}
\beta=\delta+\delta_1+\delta_2+\delta_3 ,
\eeq  
see the diagram 
\raisebox{-2.5ex}{\begin{tikzpicture}[scale= .6, transform shape]
\draw (0,0.2) node[above] {$\delta_1$} 
        (1.1,0.25) node[above] {$\delta$} 
        (2.2,0.2) node[above] {$\delta_2$} 
        (1.3,-1.1) node[right]  {$\delta_3$} ;
\tikzstyle{every node}=[circle, draw]
\node (a) at (0,0) {};
\node (b) at (1.1,0){};
\node (c) at (2.2,0) {};
\node (d) at (1.1,-1.1) {};
\tikzstyle{every node}=[circle]
\node (g) at (-1.1,0)  {$\dots$};
\node (h) at (3.3,0)  {$\dots$};
\foreach \from/\to in {a/b, b/c,  b/d, g/a, c/h}  \draw[-] (\from) -- (\to);
\end{tikzpicture}}.
Obviously, $\beta$ is not a sum of two roots of height 2. 
\\ In the non-simply-laced cases, it is convenient 
to think that $\delta$ is the unique long simple root that has an adjacent short simple root. Using 
the numbering of $\Pi$ adopted in~\cite{t41}, $\delta$ equals $\ap_{r-1}$ for $\GR{B}{r}$; $\ap_3$ for 
$\GR{F}{4}$, $\ap_2$ for $\GR{G}{2}$. Then one similarly has $\beta=\delta+\sum \frac{(\delta,\delta)}{(\ap,\ap)}\ap$, 
where the sum ranges over all $\ap\in\Pi$ adjacent to $\delta$. That is, 
\beq   \label{eq:beta2} 
\text{$\beta=\begin{cases}  
\ap_{r-2}+\ap_{r-1}+2\ap_r  &  \text{for $\GR{B}{r}$ \ \  ($\ap_r$ is short)}, \\
2\ap_{2}+\ap_{3}+\ap_4   & \text{for $\GR{F}{4}$ \ \ $(\ap_2$ is short)}, \\
3\ap_{1}+\ap_{2}   &  \text{for $\GR{G}{2}$ \ \ $(\ap_1$ is short)}. \end{cases}
$}
\eeq 
Thus, $\beta$ is long and $\hot(\beta)$ equals $4$. Under the usual unfolding procedure 
$\GR{B}{r} \leadsto \GR{D}{r+1}$,
$\GR{F}{4}\leadsto \GR{E}{6}$, and $\GR{G}{2}\leadsto \GR{D}{4}$, this $\delta$ gives rise to the 
simple root associated with the branching node and $\beta$ of~\eqref{eq:beta2} transforms into
$\beta$ of~\eqref{eq:beta1}.

Since $\tilde e$ is regular in $\g_{\lg 0\rg}$, one has $\gamma(\tilde h)=2$ for all 
$\gamma\in \Delta\lg 2\rg \cup\{\beta\}$. This allows us to determine  
$\eus D(\tilde e)$ (for the $G$-orbit of $\tilde e$). Namely, 
the above formulae for $\delta\in\Pi$ and $\beta$ provide the following uniform answer:
$\delta(\tilde h)=2$ and then one put interlacing values $0$ and $2$ on all remaining nodes,
see Tables~\ref{table:odin} and \ref{table:dva}. Clearly, this procedure provides the maximal possible 
number of isolated zeros in the weighted Dynkin diagram.
\end{rmk}

\un{Case $(\mathfrak B)$}.
Here $\rk(\Delta_{ev})=r-1$, $\Delta\lg 2\rg $ is the set of simple roots in $\Delta_{ev}^+$,  
and $e:=\sum_{\gamma\in\Delta\lg 2\rg }e_\gamma\in \g\lg 2\rg$ is already a regular
nilpotent element of $\g_{\lg 0\rg}$.

\begin{thm}   \label{thm:B}
Suppose that\/ $\g_{\lg 0\rg}=\g^\vartheta$ has a one-dimensional centre and $r=\rk(\g)$. Then \\
\centerline{ {\sf (i)} \ $B{\cdot}e$ is dense in $\ut'$, {\sf (ii)} \ $\dim (\g^{e}\cap\ut)=2r-1$, 
{\sf (iii)} \ $2r-2\le \dim \g^{e}_{\sf nil}\le 2r-1$. }
\\ Furthermore, if $e$ is almost distinguished in $\g$,
then  $\dim\g^e_{\sf nil}=2r-1$.
\end{thm}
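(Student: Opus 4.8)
The plan is to analyse everything through the principal-height grading. Fix a principal $\tri$-triple $\{e,h,f\}$ of $\g_{\lg 0\rg}$ with $h\in\h$ dominant. Since the roots in $\Delta\lg 2\rg$ are linearly independent, $[\h,e]=\g\lg 2\rg$, while $[\ut,e]\subseteq\g\lg {\ge}3\rg$; thus $[\be,e]\subseteq\ut'$ and its degree-$2$ part already exhausts $\g\lg 2\rg$. As $\ad e$ is homogeneous of degree $2$ for the principal grading, $\g^e$ is graded, $\g^e=\bigoplus_i(\g^e)_i$ with $(\g^e)_i:=\g^e\cap\g\lg i\rg=\Ker(\ad e\colon\g\lg i\rg\to\g\lg i{+}2\rg)$; writing $a_i=\dim(\g^e)_i$ one has $\dim(\g^e\cap\ut)=\sum_{i\ge 1}a_i$, and (using $\dim\g\lg 1\rg=r$, $\dim\g\lg 2\rg=r-1$) the equality $[\be,e]=\ut'$ holds precisely when $\dim(\g^e\cap\ut)=2r-1$. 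Hence {\sf (i)} and {\sf (ii)} are one and the same assertion, which I would prove in the form {\sf (ii)}.

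The crux is the inclusion $\g^e\subseteq\g\lg {\ge}-1\rg$. Indeed $\g^e_{\sf nil}$ lies in the positive $h$-eigenspaces, i.e.\ in root spaces $\g_\gamma$ with $\gamma(h)>0$; dominance of $h$ forces such $\gamma\in\Delta^+$, so $\g^e_{\sf nil}\subseteq\g\lg {\ge}1\rg$. For the reductive part, $\g^e_{\sf red}=\z_\g(e,h,f)\subseteq\g^h$, and by Theorem~\ref{prop:reg-in-g0}(\ref{item-1}) the roots killed by $h$ are exactly $\pm\beta_1,\dots,\pm\beta_k$ with $\beta_j\in\Pi$ the isolated zeros of $\ede$; thus $\g^h=\h\oplus\bigoplus_j(\g_{\beta_j}\oplus\g_{-\beta_j})\subseteq\g\lg -1\rg\oplus\g\lg 0\rg\oplus\g\lg 1\rg$, whence $\g^e\subseteq\g\lg {\ge}-1\rg$. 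Now, since $\ad e$ is skew-adjoint for the Killing form, which pairs $\g\lg i\rg$ with $\g\lg -i\rg$, the maps $\ad e\colon\g\lg i\rg\to\g\lg i{+}2\rg$ and $\ad e\colon\g\lg -i{-}2\rg\to\g\lg -i\rg$ have equal rank; this yields $a_i-a_{-i-2}=\dim\g\lg i\rg-\dim\g\lg i{+}2\rg$. Summing over $i\ge 1$ and telescoping gives $\dim(\g^e\cap\ut)=\sum_{i\ge 1}a_i=(2r-1)+\sum_{j\le -3}a_j$, and the final sum vanishes by $\g^e\subseteq\g\lg {\ge}-1\rg$. This proves {\sf (ii)}, hence {\sf (i)}.

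For {\sf (iii)} I would exploit that $\g^e_{\sf nil}\subseteq\g\lg {\ge}1\rg\subseteq\ut$, so $\g^e\cap\ut=(\g^e_{\sf red}\cap\ut)\oplus\g^e_{\sf nil}$ and therefore $\dim\g^e_{\sf nil}=(2r-1)-\dim(\g^e_{\sf red}\cap\ut)$. It remains to bound $\g^e_{\sf red}\cap\ut=(\g^e_{\sf red})_1$. Here $\g^e_{\sf red}$ is reductive and, being contained in $\g^h\subseteq\g\lg -1\rg\oplus\g\lg 0\rg\oplus\g\lg 1\rg$, is principal-graded with $(\g^e_{\sf red})_0=\g^e_{\sf red}\cap\h=\z(\g_{\lg 0\rg})$ one-dimensional (as $e$ is regular in $\g_{\lg 0\rg}$). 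A graded semisimple algebra carries a Cartan subalgebra in its degree-$0$ component, so $[\g^e_{\sf red},\g^e_{\sf red}]$ has rank $\le 1$ and $\g^e_{\sf red}$ is either the torus $\z(\g_{\lg 0\rg})$ or a copy of $\tri$ graded as $\g\lg -1\rg\oplus\g\lg 0\rg\oplus\g\lg 1\rg$ with one-dimensional pieces. Thus $\dim(\g^e_{\sf red})_1\in\{0,1\}$ and $\dim\g^e_{\sf nil}\in\{2r-1,2r-2\}$, which is {\sf (iii)}. If $e$ is almost distinguished, then $\g^e_{\sf red}$ is toral, only the first alternative occurs, and $\dim\g^e_{\sf nil}=2r-1$.

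The one genuinely delicate point is {\sf (ii)} when $e$ fails to be almost distinguished (already for $\g=\GR{A}{3}$, where $\g^e_{\sf red}\simeq\tri$): the argument of Theorem~\ref{thm:A}, which confines $\ut\cap\g^e$ inside $\g^e_{\sf nil}$ via almost-distinguishedness, breaks down because $\g^e_{\sf red}$ now contains nilpotent elements a priori meeting $\ut$. The remedy is the sharp localisation $\g^e\subseteq\g\lg {\ge}-1\rg$ --- forced by the isolated zeros of $\ede$ being \emph{simple} roots --- together with the rank symmetry $a_i=a_{-i-2}$; these pin $\dim(\g^e\cap\ut)$ to $2r-1$ irrespective of the nature of $\g^e_{\sf red}$. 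Everything else (the telescoping count and the identification $\g^e_{\sf red}\cap\ut=(\g^e_{\sf red})_1$) is routine once that inclusion is in place.
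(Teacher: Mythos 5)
There is one genuine gap, and it sits in your very first sentence: ``fix a principal $\tri$-triple $\{e,h,f\}$ of $\g_{\lg 0\rg}$ with $h\in\h$ dominant''. Your argument requires $e\in\g\lg 2\rg$ with all root coefficients nonzero --- this is what gives $[\h,e]=\g\lg 2\rg$, the degree-$2$ homogeneity of $\ad e$ for the principal grading, and $\g^e_{\sf red}\cap\h=\z(\g_{\lg 0\rg})$. But once $e\in\g\lg 2\rg$ is imposed, $h$ is no longer at your disposal: $h=[e,f]$ lies in $\h\cap[\g_{\lg 0\rg},\g_{\lg 0\rg}]$ and must satisfy $\gamma(h)=2$ for every $\gamma\in\Delta\lg 2\rg$; since in case $(\mathfrak B)$ the set $\Delta\lg 2\rg$ is a basis of the dual of the $(r{-}1)$-dimensional space $\h\cap[\g_{\lg 0\rg},\g_{\lg 0\rg}]$, these conditions determine $h$ \emph{uniquely}. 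Dominance of this forced $h$ is therefore a statement to be proved, not a normalisation one may ``fix''. It is in fact true in the two types $\GR{A}{r}$ and $\GR{C}{r}$ that occur here (this is exactly what the diagrams in Table~\ref{table:dva} record), but your text contains no argument for it, and your proof leans on it essentially in two places: to identify the roots annihilated by $h$ with $\pm$ the zeros of $\ede$, and --- more seriously --- to place $\g^e_{\sf nil}$ inside $\ut$. If some $\ap\in\Pi$ had $\ap(h)<0$, then $\g^e_{\sf nil}$ could meet $\g\lg -1\rg$, and the decomposition $\g^e\cap\ut=(\g^e_{\sf red}\cap\ut)\oplus\g^e_{\sf nil}$ on which your part {\sf (iii)} rests would collapse.

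Apart from this, your route is genuinely different from the paper's and the mechanics are correct. For {\sf (i)}--{\sf (ii)} the paper proves $[\be,e]=\ut'$ directly: setting $\g\lg j,k)=\g\lg j\rg\cap\g(k)$ for the $h$-grading, it deduces from Theorem~\ref{prop:reg-in-g0}(1) that $\gamma(h)\ge 1$ whenever $\gamma\in\Delta^+$ and $\hot(\gamma)\ge 2$, and then uses $\g(k)\subset\Ima(\ad e)$ for $k\ge 1$; you instead localise $\g^e\subset\g\lg{\ge}{-1}\rg$ and pin $\dim(\g^e\cap\ut)$ by Killing-form rank symmetry plus telescoping, which is an attractive alternative. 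Note that the paper's weaker positional fact (about the same forced $h$) already yields your inclusion $\g^e\subset\g\lg{\ge}{-1}\rg$, since then any negative root space meeting $\g^e$ must be $\g_{-\ap}$ with $\ap\in\Pi$; so {\sf (i)}--{\sf (ii)} survive on that input alone, and it is only {\sf (iii)} that needs full dominance. The paper's {\sf (iii)} is arranged precisely so as not to need it: the upper bound $\dim\g^e_{\sf nil}=\dim\g(1)+\dim\g(2)\le (r-1)+r$ comes from the mixed $(\vartheta,e)$-grading via Lemma~\ref{lm:chet-nechet} and Lemma~\ref{lm:finite-orb}, and the lower bound from $\dim(\ut\cap\g^e)\le\dim\g^e_{\sf nil}+1$, where the rank-$\le 1$ property of $\g^e_{\sf red}$ is read off from its $\vartheta$-fixed part being $\z(\g_{\lg 0\rg})$ (your derivation of the same rank bound from the principal grading of $\g^e_{\sf red}$ is fine). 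To complete your proof, either verify dominance of the forced $h$ (a short computation in types $\GR{A}{r}$ and $\GR{C}{r}$), or replace the decomposition in {\sf (iii)} by the paper's dimension count.
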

\begin{proof}
Such a situation occurs only if $\g$ is of type $\GR{A}{r}$ or $\GR{C}{r}$, and it is not hard to check the 
assertion via direct matrix calculations. However, there is a conceptual argument, too.

{\sf (i)} \ We have to prove that $[\be,e]=\ut'$, i.e., $[\g\lg j-2\rg, e]=\g\lg j\rg$ for all $j\ge 2$.
For $j$ even, this already stems from the fact that $e\in\g_{\lg 0\rg}$ is regular nilpotent.
Anyway, our next argument applies to all $j$.

Since $e\in \g\lg 2\rg$, one can take an $\tri$-triple $\{e,h,f\}\subset \g_{\lg 0\rg}$ such that 
$h\in \g\lg 0\rg$ and $f \in \g\lg -2\rg$, see~\cite{vi79}. Set $\g(k)=\{v\in\g\mid [h,v]=kv\}$ so that 
$e\in\g(2)$. However, the grading 
$\{\g(k)\}_{k\in\BZ}$ is not always the principal $\BZ$-grading, i.e., the spaces $\g(i)$ and $\g\lg i\rg$ 
are different. (Actually, $\g\lg k\rg=\g(k)$\  for all $k$ if and only if $\g$ is of type $\GR{A}{2n}$.)
Still, there is a mild relationship, which is sufficient for us. Set 
\[
      \g\lg j,k):=\g\lg j\rg\cap \g(k).  
\]
Since $\eus D(e)$ has only isolated zeros by Theorem~\ref{prop:reg-in-g0}(\ref{item-1}), if 
$\gamma\in\Delta^+$ and $\hot(\gamma)\ge 2$, then $\gamma(h)\ge 1$.  In other words,
if $j\ge 2$ and $\g\lg j,k)\ne 0$, then $k\ge 1$.
Hence $\g\lg j\rg=\bigoplus_{k\ge 1}\g\lg j, k)$.
Since $\g(k)\subset \Ima(\ad e)$ for all $k\ge 1$ (see~{\sf (i)} in page~\pageref{n-i}), we see that 
$\g\lg j\rg $  belongs to $\Ima(\ad e)$ for any $j\ge 2$.
Hence $[\be,e]=\ut'$.

{\sf (ii)} \ By part {\sf (i)}, we have $[\ut,e]= \g\lg {\ge}3\rg$. Hence $\dim (\g^e\cap\ut)=
\dim (\g\lg 1\rg\oplus \g\lg 2\rg)=2r-1$.

{\sf (iii)} \ Combining the $\BZ_2$-grading \eqref{eq:max} and the $\BZ$-grading $\{\g(k)\}_{k\in\BZ}$
determined by $h$, one obtains a mixed grading related to $(\vartheta,e)$. Here 
$\g_{\lg 0\rg}(1)=0$ and $\g_{\lg 0\rg}(2)=\g\lg 2\rg$; hence 
$\dim \g_{\lg 0\rg}(2)=r-1$, Next, 
either $\g_{\lg 1\rg}(2)\ne 0$ or $\g_{\lg 1\rg}(1)\ne 0$ (Lemma~\ref{lm:chet-nechet}); with either dimension at most $r=\dim\g_{\lg 0\rg}(0)$. 
Hence $\dim\g^{e}_{\sf nil}=\dim\g(1)+\dim\g(2)\le 2r-1$.  

On the other hand, $\dim\g^e_{\lg 0\rg}(0)=1$, i.e.,  $\g^e_{\sf red}\cap\g_{\lg 0\rg}$ is one-dimensional. 
This is only possible if  $[\g^e_{\sf red},\g^e_{\sf red}]$ is either trivial or $\tri$.
Consequently, $\dim(\ut\cap \g^e)\le \dim \g^e_{\sf nil} +1$, hence
$\dim \g^e_{\sf nil}\ge 2r-2$. Finally, if $e$ is almost distinguished, then 
$\ut\cap \g^e \subset \g^e_{\sf nil}$ and both spaces actually coincide for dimension reason.
\end{proof}

\begin{rmk}  \label{rmk:B}
{\sf (1)} \ A posteriori, there are more precise assertions related to Theorem~\ref{thm:B}: 

\textbullet\quad If $\g=\GR{A}{2n-1}$ or $\GR{C}{2n-1}$,  then $\g^e_{\sf red}=\tri$ and 
$\dim \g^e_{\sf nil}=2r-2$ (here $r=2n-1$).

\textbullet \quad If $\g=\GR{A}{2n}$ or $\GR{C}{2n}$,  then $\g^e_{\sf red}=\te_1$ (i.e., $e$ is almost distinguished in $\g$) and $\dim \g^e_{\sf nil}=2r-1$ (here $r=2n$).
\\ \indent
{\sf (2)} \ 
The only case in which $e$ is not even in $\g$ is that of $\GR{A}{2n}$ (cf. Example~\ref{ex:not-even} and
Proposition~\ref{thm:cox-even}).
In the remaining cases, $\eus D(e)$ has the maximal possible number of isolated zeros, see Table~\ref{table:dva}. 
\end{rmk}

\noindent
In Tables~\ref{table:odin} and \ref{table:dva}, we gather information on the {\sl PI\/}--involutions 
$\vartheta$ and the nilpotent orbits of $G$ that contain a regular nilpotent element $e_\vartheta$ of 
$\g_{\lg 0\rg}$. For the exceptional (resp. classical) Lie algebras, the orbit $G{\cdot}e_\vartheta$ is 
denoted by the Dynkin-Bala-Carter label~\cite[8.4]{CM} (resp. by the corresponding partition). For the 
exceptional Lie algebras, the structure of $\g^{e_\vartheta}_{\sf red}$ and the numbers 
$\dim\g^{e_\vartheta}_{\sf nil}$ are pointed out in the Tables in~\cite{alela}. For the classical Lie algebras,
this information is being extracted from the partition of $G{\cdot}e_\vartheta$, 
see~\cite[Theorem\,6.1.3]{CM}. 

%%%%%%%%%%  Tablitsa 1 %%%%%%%%%%%%%
\begin{table}[ht]
\caption{The {\sl PI\/}--involutions $\vartheta$ and orbits $G{\cdot}e_\vartheta$ for the exceptional Lie algebras}   \label{table:odin}
\begin{center}
\begin{tabular}{>{$}c<{$}| >{$}c<{$} >{$}c<{$} >{$}r<{$} >{$}c<{$} >{$}c<{$} >{$}c<{$}|}
\g & \g_{\lg 0\rg} & G{\cdot}e_\vartheta & \eus D(e_\vartheta)\text{\hspace{6ex}} & \dim\g^{e_\vartheta} & \g_{\sf red}^{e_\vartheta} & \dim\g_{\sf nil}^{e_\vartheta} \\ \hline\hline
\GR{E}{6} & \GR{A}{5}\dotplus\GR{A}{1} & \mathsf E_6(a_3) & \rule{0pt}{4ex} 
\raisebox{-3.2ex}{\begin{tikzpicture}[scale= .55, transform shape]
\tikzstyle{every node}=[circle, draw, fill=white!55]
\node (a) at (0,0) {\bf 2};
\node (b) at (1.2,0) {\bf 0};
\node (c) at (2.4,0) {\bf 2};
\node (d) at (3.6,0) {\bf 0};
\node (e) at (4.8,0) {\bf 2};
\node (f) at (2.4,-1.2) {\bf 0};
\foreach \from/\to in {a/b, b/c, c/d, d/e, c/f}  \draw[-] (\from) -- (\to);
\end{tikzpicture}}  & 12 & \{0\} & 12  
\\
\GR{E}{7} & \GR{A}{7} & \mathsf E_6(a_1) &  \rule{0pt}{4ex}
\raisebox{-3.2ex}{\begin{tikzpicture}[scale= .55, transform shape]
\tikzstyle{every node}=[circle, draw, fill=white!55]
\node (a) at (0,0) {\bf 0};
\node (b) at (1.2,0) {\bf 2};
\node (c) at (2.4,0) {\bf 0};
\node (d) at (3.6,0) {\bf 2};
\node (e) at (4.8,0) {\bf 0};
\node (f) at (6,0) {\bf 2};
\node (g) at (3.6,-1.2) {\bf 0};
\foreach \from/\to in {a/b, b/c, c/d, d/e, e/f, d/g}  \draw[-] (\from) -- (\to);
\end{tikzpicture}} & 15 & \te_1 & 14  
\\
\GR{E}{8} & \GR{D}{8} & \mathsf E_8(a_4) & \rule{0pt}{4ex}
\raisebox{-3.2ex}{\begin{tikzpicture}[scale= .55, transform shape]
\tikzstyle{every node}=[circle, draw, fill=white!55]
\node (h) at (-1.2,0) {\bf 2};
\node (a) at (0,0) {\bf 0};
\node (b) at (1.2,0) {\bf 2};
\node (c) at (2.4,0) {\bf 0};
\node (d) at (3.6,0) {\bf 2};
\node (e) at (4.8,0) {\bf 0};
\node (f) at (6,0) {\bf 2};
\node (g) at (3.6,-1.2) {\bf 0};
\foreach \from/\to in {h/a, a/b, b/c, c/d, d/e, e/f, d/g}  \draw[-] (\from) -- (\to);
\end{tikzpicture}}   & 16 & \{0\} & 16  
\\
\GR{F}{4} & \GR{C}{3}\dotplus\GR{A}{1} & \mathsf F_4(a_2) & \rule{0pt}{4ex} 
\raisebox{-.5ex}{\begin{tikzpicture}[scale= .55, transform shape]
\tikzstyle{every node}=[circle, draw, fill=orange!30]
\node (a) at (0,0) {\bf 2};
\node (b) at (1,0) {\bf 0};
\tikzstyle{every node}=[circle, draw, fill=white!55]
\node (c) at (2.5,0) {\bf 2};
\node (d) at (3.5,0) {\bf 0};
\foreach \from/\to in {a/b,  c/d}  \draw[-] (\from) -- (\to);
\draw (1.4, .07) -- +(.7,0);
\draw (1.4, -.07) -- +(.7,0);
\end{tikzpicture}}  \hspace*{2ex}
& 8 & \{0\} & 8   
\\
\GR{G}{2} & \GR{A}{1}\dotplus\GRt{A}{1} & \mathsf G_2(a_1) & \rule{0pt}{3.5ex} 
\raisebox{-.5ex}{\begin{tikzpicture}[scale= .55, transform shape]
\tikzstyle{every node}=[circle, draw, fill=orange!30]
\node (b) at (1,0) {\bf 0};
\tikzstyle{every node}=[circle, draw, fill=white!55]
\node (c) at (2.5,0) {\bf 2};
\draw (1.4, .1) -- +(.7,0);
\draw (1.43, 0) -- +(.64,0);
\draw (1.4, -.1) -- +(.7,0);
\end{tikzpicture} } \hspace*{5ex}
& 4 & \{0\} & 4\\  \hline
\end{tabular}
\end{center}
\end{table}

%%%%%%%%%%  Tablitsa 2 %%%%%%%%%%%%%
\begin{table}[ht]
\caption{The {\sl PI\/}--involutions $\vartheta$ and orbits $G{\cdot}e_\vartheta$ for the classical Lie algebras}   \label{table:dva}
\begin{center}
\begin{tabular}{>{$}l<{$}| >{$}c<{$} >{$}c<{$} >{$}c<{$} >{$}c<{$} >{$}c<{$} >{$}c<{$}|}
\hspace{2ex}\g & \g_{\lg 0\rg} & \blb(G{\cdot}e_\vartheta) & \eus D(e_\vartheta) & \dim\g^{e_\vartheta} & 
\g_{\sf red}^{e_\vartheta} & \dim\g_{\sf nil}^{e_\vartheta} \\ \hline\hline
\GR{B}{2n} & \GR{B}{n}{\dotplus}\GR{D}{n} & (2n{+}1,2n{-}1,1) & \rule{0pt}{3ex} 
\raisebox{-.7ex}{\begin{tikzpicture}[scale= .55, transform shape]
\tikzstyle{every node}=[circle, draw, fill=white!55]
\node (c) at (2.2,0) {\bf 2};
\node (d) at (3.3,0) {\bf 0};
\node (e) at (5.6,0) {\bf 2};
\tikzstyle{every node}=[circle, draw, fill=orange!30]
\node (f) at (7,0) {\bf 0};
\foreach \from/\to in {  c/d}  \draw[-] (\from) -- (\to);
%\draw[->,line width=2pt] (e) -- (f);
\draw (6, .07) -- +(.6,0);
\draw (6, -.07) -- +(.6,0);
\tikzstyle{every node}=[circle]
\node (g) at (4.5,0)  {$\dots$};
\foreach \from/\to in {d/g, g/e}  \draw[-] (\from) -- (\to);
\end{tikzpicture} } & 4n & \{0\} & 4n
\\
\GR{B}{2n{-}1} & \GR{B}{n{-}1}{\dotplus}\GR{D}{n} & (2n{-}1,2n{-}1,1) & \rule{0pt}{3.5ex} 
\raisebox{-.7ex}{\begin{tikzpicture}[scale= .55, transform shape]
\tikzstyle{every node}=[circle, draw, fill=white!55]
\node (b) at (1.1,0) {\bf 0};
\node (c) at (2.2,0) {\bf 2};
\node (d) at (3.3,0) {\bf 0};
\node (e) at (5.6,0) {\bf 2};
\tikzstyle{every node}=[circle, draw, fill=orange!30]
\node (f) at (7,0) {\bf 0};
\foreach \from/\to in { b/c,  c/d}  \draw[-] (\from) -- (\to);
\draw (6, .07) -- +(.6,0);
\draw (6, -.07) -- +(.6,0);
\tikzstyle{every node}=[circle]
\node (g) at (4.5,0)  {$\dots$};
\foreach \from/\to in {d/g, g/e}  \draw[-] (\from) -- (\to);
\end{tikzpicture} } & 4n{-}1 & \te_1 & 4n{-}2 
\\
\GR{D}{2n} & \GR{D}{n}{\dotplus}\GR{D}{n} & (2n{-}1,2n{-}1,1,1) &  \rule{0pt}{5.5ex} 
\raisebox{-2.7ex}{\begin{tikzpicture}[scale= .5, transform shape]
\tikzstyle{every node}=[circle, draw, fill=white!55]
\node (c) at (2.2,0) {\bf 0};
\node (d) at (3.3,0) {\bf 2};
\node (e) at (5.6,0) {\bf 0};
\node (f) at (6.7,0) {\bf 2};
\node (g) at (7.7,1) {\bf 0};
\node (h) at (7.7,-1) {\bf 0};
\foreach \from/\to in {  c/d, e/f, f/g, f/h}  \draw[-] (\from) -- (\to);
\tikzstyle{every node}=[circle]
\node (g) at (4.5,0)  {$\dots$};
\foreach \from/\to in {d/g, g/e}  \draw[-] (\from) -- (\to);
\end{tikzpicture} } & 4n{+}2 & \te_2 & 4n 
\\
\GR{D}{2n{-}1} & \GR{D}{n{-}1}{\dotplus}\GR{D}{n} & (2n{-}1,2n{-}3,1,1) & \rule{0pt}{5.5ex} 
\raisebox{-2.7ex}{\begin{tikzpicture}[scale= .5, transform shape]
\tikzstyle{every node}=[circle, draw, fill=white!55]
\node (b) at (1.1,0) {\bf 2};
\node (c) at (2.2,0) {\bf 0};
\node (d) at (3.3,0) {\bf 2};
\node (e) at (5.6,0) {\bf 0};
\node (f) at (6.7,0) {\bf 2};
\node (g) at (7.7,1) {\bf 0};
\node (h) at (7.7,-1) {\bf 0};
\foreach \from/\to in { b/c,  c/d, e/f, f/g, f/h}  \draw[-] (\from) -- (\to);
\tikzstyle{every node}=[circle]
\node (g) at (4.5,0)  {$\dots$};
\foreach \from/\to in {d/g, g/e}  \draw[-] (\from) -- (\to);
\end{tikzpicture} } & 4n{-}1 & \te_1 & 4n{-}2 
\\ \hline
\GR{C}{2n-1} & \mathfrak{gl}_{2n-1} & (2n{-}1,2n{-}1) & \rule{0pt}{3.5ex} 
\raisebox{-.7ex}{\begin{tikzpicture}[scale= .55, transform shape]
\tikzstyle{every node}=[circle, draw, fill=orange!30]
\node (b) at (1.1,0) {\bf 0};
\node (c) at (2.2,0) {\bf 2};
\node (d) at (4.5,0) {\bf 0};
\node (e) at (5.6,0) {\bf 2};
\tikzstyle{every node}=[circle, draw, fill=white!55]
\node (f) at (7,0) {\bf 0};
%\draw[<-,line width=2pt] (e) -- (f);
\draw (6, .07) -- +(.6,0);
\draw (6, -.07) -- +(.6,0);
\tikzstyle{every node}=[circle]
\node (g) at (3.4,0)  {$\dots$};
\foreach \from/\to in { b/c, c/g, g/d, d/e}  \draw[-] (\from) -- (\to);
\end{tikzpicture} } & 4n{-}1 & \tri & 4n{-}4 
\\ 
\GR{C}{2n} & \mathfrak{gl}_{2n} & (2n,2n) & \rule{0pt}{3.5ex} 
\raisebox{-.7ex}{\begin{tikzpicture}[scale= .55, transform shape]
\tikzstyle{every node}=[circle, draw, fill=orange!30]
\node (c) at (2.2,0) {\bf 0};
\node (d) at (3.3,0) {\bf 2};
\node (e) at (5.6,0) {\bf 0};
\tikzstyle{every node}=[circle, draw, fill=white!55]
\node (f) at (7,0) {\bf 2};
%\draw[<-,line width=2pt] (e) -- (f);
\draw (6, .07) -- +(.6,0);
\draw (6, -.07) -- +(.6,0);
\tikzstyle{every node}=[circle]
\node (g) at (4.5,0)  {$\dots$};
\foreach \from/\to in {c/d, d/g, g/e}  \draw[-] (\from) -- (\to);
\end{tikzpicture} } & 4n & \te_1 & 4n{-}1 
\\ 
\GR{A}{2n-1} & \mathfrak{gl}_{n}{\dotplus}\mathfrak{sl}_n & (n,n) & \rule{0pt}{3.5ex} 
\raisebox{-.7ex}{\begin{tikzpicture}[scale= .55, transform shape]
\tikzstyle{every node}=[circle, draw, fill=white!55]
\node (b) at (1.1,0) {\bf 0};
\node (c) at (2.2,0) {\bf 2};
\node (d) at (3.3,0) {\bf 0};
\node (e) at (5.6,0) {\bf 2};
\node (f) at (6.7,0) {\bf 0};
\tikzstyle{every node}=[circle]
\node (g) at (4.5,0)  {$\dots$};
\foreach \from/\to in { b/c, c/d, d/g, g/e, e/f}  \draw[-] (\from) -- (\to);
\end{tikzpicture} } & 4n{-}1 & \tri & 4n{-}4 
\\ 
\GR{A}{2n} & \mathfrak{gl}_{n+1}{\dotplus}\mathfrak{sl}_n & (n{+}1,n) & \rule{0pt}{3.5ex} 
\raisebox{-.7ex}{\begin{tikzpicture}[scale= .55, transform shape]
\tikzstyle{every node}=[circle, draw, fill=white!55]
\node (b) at (1.1,0) {\bf 1};
\node (c) at (2.2,0) {\bf 1};
\node (d) at (3.3,0) {\bf 1};
\node (e) at (5.6,0) {\bf 1};
\node (f) at (6.7,0) {\bf 1};
\tikzstyle{every node}=[circle]
\node (g) at (4.5,0)  {$\dots$};
\foreach \from/\to in { b/c, c/d, d/g, g/e, e/f}  \draw[-] (\from) -- (\to);
\end{tikzpicture} } & 4n & \te_1 & 4n{-}1 
\\ 
\hline
\end{tabular}
\end{center}
\end{table}

\begin{rmk}   \label{rmk:collaps}
Since there is a dense $B$-orbit in $\ut'$, the varieties $G\times_B\ut'$ and $G{\cdot}\ut'$ contain dense 
$G$-orbits. But their dimensions can be different, i.e., the natural `collapsing' in the sense of 
Kempf~\cite{ke76} $\tau: G\times_B\ut'\to G{\cdot}\ut'$ is not always generically finite-to-one. Indeed, if 
$B{\cdot}e$ is dense in $\ut'$, then $\dim G{\cdot}\ut'=\dim G{\cdot}e$ is even and 
$\dim(G\times_B\ut')=2\dim\ut-r=\dim\g-2r$. Hence the necessary (but not sufficient) condition is that 
$r$ is even. It is also clear that $d:=\dim(G\times_B\ut')-\dim G{\cdot}\ut'=\dim\g^{e_\vartheta}-2r$.
Hence using Tables~\ref{table:odin} and \ref{table:dva}, we obtain
\\
\centerline{\it $\tau$ is generically finite-to-one if and only if\/ $\rk(\g)$ is even and $\g\ne\GR{D}{2n}$.}
\\
If $\rk(\g)$ is odd, then $d=1$, whereas $d=2$ for $\GR{D}{2n}$.
\end{rmk}

%%%%%%%%%%%%    Section 5     %%%%%%%%%%
\section{Mixed gradings and divisible orbits}       
\label{sect:divis}

\noindent
For $e\in\N$, let $\frac{1}{2}\eus D(e)$ be the Dynkin diagram equipped with the labels
$\frac{1}{2}\ap(h)$, $\ap\in\Pi$. Following~\cite{divis},
an element $e\in\N$ (orbit $G{\cdot}e\subset\N$) is said to be {\it divisible}, if $\frac{1}{2}\eus D(e)$ is 
again a weighted Dynkin diagram. Equivalently, if $h$ is a characteristic of $e$, then $h/2$ 
is a characteristic of another nilpotent element. Then we write $\edva$ for a  nilpotent element with 
characteristic $h/2$. Hence $\frac{1}{2}\eus D(e)=:\eus D(\edva)$. The notation is suggested by the 
equality 
\[    
        \dim\Ker(\ad e)^2=\dim\Ker(\ad\edva)=\dim\z_\g(\edva) .
\] 
Furthermore, if $\g=\sln$ and $e$ is divisible, then one can take $\edva=e^2$, the usual matrix 
power~\cite[Theorem\,3.1]{divis}.
Clearly, a divisible element must be even.
If $\{e,h,f\}$ is an $\tri$-triple and $\g=\bigoplus_{i\in \BZ}\g(i)$ is the $\BZ$-grading determined by $h$, 
then $e$ is divisible if and only if there is an $x\in \g(4)$ such that $h\in\Ima(\ad x)$. In this case, $e$ is
necessarily even, 
there is an $\tri$-triple of the form $\{x, h/2, \tilde f\}$ with $\tilde f\in\g(-4)$, and one can take
$\edva=x$.
 
Mixed gradings can be used for detecting divisible orbits with good properties. Suppose that $e\in\N$ 
is even and $\sigma(e)=e$ for some $\sigma\in\mathsf{Inv}(\g)$. Consider a mixed grading  
related to $(\sigma, e)$. Our intention is to find a suitable element $\edva\in \g(4)$. 
Of course, this is not always possible, and a sufficient condition is given below. Recall that 
$d_j(i)=\dim\g_j(i)$ and  $d_0(0)\ge d_j(i)$ whenever $i\ne 0$.

\begin{thm}    \label{prop:dostat-del}
As above, let $\g=\bigoplus_{i,j}\g_j(i)$ be a mixed grading related to $(\sigma,e)$, hence 
$e\in\g_0(2)$. Suppose that $d_0(0)=d_1(4)$. Then 
\begin{enumerate}
\item  the orbit $G{\cdot}e$ is divisible and there exists $\edva\in \g_1(4)\subset\g(4)$;
\item $d_0(0)=d_0(2)= d_1(2)=d_1(4)$ and also 
$d_0(4k+2)=d_1(4k+2)$
for all $k\in\BZ$;
\item $\g_1$ does not contain 3-dimensional $\lg e,h,f\rg$-modules,
$\g_0$ is semisimple, and $e$ is distinguished in $\g_0$;
\item both $e$ and $\edva$ are almost distinguished in $\g$.
\end{enumerate}
\end{thm}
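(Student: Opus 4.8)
The plan is to reduce everything to three ingredients: the monotonicity/boundedness of the dimensions $d_j(i)$, an application of Proposition~\ref{prop:0=2}, and a Weyl-group symmetry attached to the divided $\tri$-triple. Throughout I use that $\sigma$ centralises the chosen triple $\{e,h,f\}\subset\g_0$, so every irreducible $\langle e,h,f\rangle$-summand of $\g$ lies entirely in $\g_0$ or in $\g_1$; evenness of $e$ keeps all $h$-weights even. First I would squeeze the hypothesis $d_0(0)=d_1(4)$: since $d_1(2)\ge d_1(4)$ (as $2\ge -1$) and $d_1(2)\le d_0(0)$ (Lemma~\ref{lm:finite-orb}), the chain $d_0(0)=d_1(4)\le d_1(2)\le d_0(0)$ collapses to $d_1(2)=d_1(4)=d_0(0)$. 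The equality $d_1(2)=d_1(4)$ says $\g_1$ has no $3$-dimensional $\langle e,h,f\rangle$-module, the first claim of (3). Moreover $d_0(0)=d_1(2)$ is precisely the hypothesis of Proposition~\ref{prop:0=2}, which I invoke to get that $e$ is almost distinguished in $\g$ (half of (4)) and to produce $e'\in\g_1(2)$ with an $\tri$-triple $\{e',h,f'\}$, $f'\in\g_1(-2)$, and $e'\in G(0){\cdot}e$.

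The parity-flipping element $e'$ drives the rest of (2) and (3). Because $e'\in\g_1$, the surjection $\ad e'\colon\g(2)\to\g(4)$ (standard $\tri$-theory for $\{e',h,f'\}$) is homogeneous of $\BZ_2$-degree $1$, so each component $\g_0(2)\to\g_1(4)$ and $\g_1(2)\to\g_0(4)$ is onto. Surjectivity of the first gives $d_0(2)\ge d_1(4)=d_0(0)$, and together with $d_0(2)\le d_0(0)$ this forces $d_0(2)=d_0(0)$. Hence the number of trivial $\langle e,h,f\rangle$-summands of $\g_0$ is $\dim\z_{\g_0}(e,h,f)=d_0(0)-d_0(2)=0$; this kills the centre of $\g_0$, so $\g_0$ is semisimple and $e$ is distinguished in $\g_0$, completing (3). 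It also records the equalities $d_0(0)=d_0(2)=d_1(2)=d_1(4)$.

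For divisibility I would produce $\edva$ in $\g_1(4)$ directly. By Lemma~\ref{lm:finite-orb} there is a dense $G_0(0)$-orbit $\co\subset\g_1(4)$; for $x\in\co$ one has $[\g_0(0),x]=\g_1(4)$, and since $\dim\g_0(0)=d_0(0)=d_1(4)=\dim\g_1(4)$, the map $\ad x\colon\g_0(0)\to\g_1(4)$ is an isomorphism, so $\g_0^x(0):=\{u\in\g_0(0)\mid[u,x]=0\}=0$. The Killing-form duality used in Proposition~\ref{prop:0=2} identifies $[\g_1(-4),x]$ with the orthogonal complement of $\g_0^x(0)$ in $\g_0(0)$, whence $[\g_1(-4),x]=\g_0(0)\ni h$; rescaling the element $z\in\g_1(-4)$ with $[x,z]=h$ yields an $\tri$-triple $\{x,h/2,z/2\}$ with $x,z/2\in\g_1$. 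Thus $h\in\Ima(\ad x)$ with $x\in\g(4)$, so by the divisibility criterion $G{\cdot}e$ is divisible and we may take $\edva=x\in\g_1(4)\subset\g(4)$, which is (1). The remaining part of (4) is then immediate: the reductive centraliser $\z_\g(\edva,h/2,z/2)=\g(0)^{\edva}$ meets $\g_0(0)$ trivially, because $\ad\edva\colon\g_0(0)\isom\g_1(4)$ is injective; hence it lies in $\g_1(0)$, is therefore abelian, and so is toral.

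It remains to prove $d_0(4k+2)=d_1(4k+2)$ for all $k$, which I expect to be the main obstacle. My plan is to exploit the divided triple $\tri''=\langle\edva,h/2,z/2\rangle$, whose nilpotent elements lie in $\g_1$. Let $n\in G$ represent the nontrivial Weyl element of $\tri''$, so $\Ad(n)\colon\g(i)\isom\g(-i)$ reverses the $h$-grading. Since $\sigma(\edva)=-\edva$ and $\sigma(z/2)=-z/2$, one checks $\sigma(n)=n^{-1}$, whence $\sigma\,\Ad(n)\,\sigma=\Ad(n)^{-1}$; and $\Ad(n)^2$ acts on $\g(i)$ by $(-1)^{i/2}$. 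For $v\in\g_0(i)$ these give $\sigma(\Ad(n)v)=(-1)^{i/2}\Ad(n)v$, so $\Ad(n)$ carries $\g_0(i)$ into $\g_0(-i)$ when $i\equiv0\ (\mathrm{mod}\ 4)$ and into $\g_1(-i)$ when $i\equiv2\ (\mathrm{mod}\ 4)$. In the latter case $\Ad(n)\colon\g_0(i)\isom\g_1(-i)$, and with the symmetry $d_1(-i)=d_1(i)$ this yields $d_0(i)=d_1(i)$ for every $i\equiv2\ (\mathrm{mod}\ 4)$, which is (2). The delicate points are the bookkeeping of how $\sigma$ conjugates $\Ad(n)$ and the observation that $\Ad(n)^2$ equals $-1$ exactly on the weight spaces $\equiv2\ (\mathrm{mod}\ 4)$, which is what interchanges $\g_0$ and $\g_1$ there.
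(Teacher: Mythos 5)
Your proof is correct, and it reproduces the paper's core construction while replacing one key step by a different mechanism. The heart of your argument — taking the dense $G_0(0)$-orbit in $\g_1(4)$ (Lemma~\ref{lm:finite-orb}), using the Killing-form identification of $[\g_1(-4),x]$ with the orthogonal complement of $\g_0^x(0)$ to produce the triple $\{x,h/2,z/2\}$, and then locating the reductive centralisers of $e$ and $\edva$ inside $\g_1(0)$ to get torality — is exactly the paper's proof of parts (1) and (4). The differences are in (2) and (3). For the chain $d_0(0)=d_0(2)=d_1(2)=d_1(4)$ you work \emph{before} constructing $\edva$: you squeeze $d_0(0)=d_1(4)\le d_1(2)\le d_0(0)$, invoke Proposition~\ref{prop:0=2} to get $e'\in\g_1(2)$, and use surjectivity of $\ad e'\colon\g_0(2)\to\g_1(4)$; the paper instead constructs $\edva$ first and uses the bijectivity of $\ad\edva\colon\g_0(-2)\to\g_1(2)$ and $\g_1(-2)\to\g_0(2)$ (the analogue of~\eqref{eq:mix-inj-surj} for the characteristic $h/2$). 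Both are sound, and your detour through Proposition~\ref{prop:0=2} also hands you half of (4) for free; your semisimplicity argument for (3) via $\dim\z_{\g_0}(e,h,f)=d_0(0)-d_0(2)=0$ is a clean alternative to the paper's $[\g_0,\g_0]$-dimension count. The genuinely different step is the one you flagged as the main obstacle, $d_0(4k+2)=d_1(4k+2)$: you run the Weyl element $n$ of the divided triple, verify $\sigma\,\Ad(n)\,\sigma=\Ad(n)^{-1}$ (valid, since $\sigma$ negates $\edva$ and $z/2$) and that $\Ad(n)^2$ acts on $\g(i)$ by $(-1)^{i/2}$, and deduce $\Ad(n)\colon\g_0(i)\isom\g_1(-i)$ for $i\equiv 2 \pmod 4$. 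This works, but it is heavier than needed: since $\edva\in\g_1$, each application of $\ad\edva$ flips the $\BZ_2$-degree, so the standard bijections $(\ad\edva)^{2k+1}\colon\g(-4k-2)\to\g(4k+2)$ restrict to bijections $\g_0(-4k-2)\to\g_1(4k+2)$, and $d_0(-i)=d_0(i)$ finishes it in one line — this is the paper's argument, and it avoids both the conjugation identity and the computation of $\Ad(n)^2$ that your version must carry.
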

\begin{proof} 
(1) \ If $x\in \g_1(4)$, then $[\g_1(-4),x]$ is the orthogonal complement of $\g^x_0(0)$ in $\g_0(0)$. By 
Lemma~\ref{lm:finite-orb}, $G_0(0)$ has an open orbit in $\g_1(4)$, say $\co$. Now, if $x\in\co$, 
then $\g^x_0(0)=\{0\}$ in view of the hypothesis $d_0(0)=d_1(4)$. Hence 
$[\g_1(-4),x]=\g_0(0)$ and there is $y\in \g_1(-4)$ such that $[x,y]=h/2$. Then $\{x,h/2,y\}$ a desired 
$\tri$-triple. 
Thus, every element of the open $G_0(0)$-orbit 
in $\g_1(4)$ can be taken as  $\edva$. This also implies that $e$ must be even in $\g$.

(2) \ Since $h/2$ is a characteristic of $\edva\in\g_1(4)$, the analogue of Eq.~\eqref{eq:mix-inj-surj}
for $\edva$ implies that the mappings $\ad\edva: \g_0(-2)\to \g_1(2)$ and
$\ad\edva: \g_1(-2)\to \g_0(2)$ are bijective. Hence $\dim\g_0(2)=\dim\g_1(2)$.
Combining this with Eq.~\eqref{eq:mix-inj-surj} yields
\[
  d_0(0)\ge d_0(2)= d_1(2)\ge d_1(4)=d_0(0) .
\]
Since the $\BZ$-grading of $\g$ is determined by a characteristic of $e$ and $e\in\g(2)$, 
the $\tri$-theory readily implies that $(\ad e)^m: \g(-m)\to \g(m)$ is a bijection. Likewise, since $h/2$ is a
characteristic of $\edva$ and $\edva\in\g_1(4)$, this implies that 
\[
(\ad\edva)^{2k+1}: \g_0(-4k{-}2)\to \g_1(4k{+}2) \ \text{ and } \ 
(\ad\edva)^{2k+1}: \g_1(-4k{-}2)\to \g_0(4k{+}2)
\] 
are bijections. Therefore, $d_0(4k+2)=d_1(4k+2)$.

(3) \ Now, the equality $d_1(2)= d_1(4)$ means that $\g_1$ contains no $3$-dimensional 
$\lg e,h,f\rg$-modules. While  the equality $d_0(0)= d_0(2)$ implies that $\g_0$ is semisimple 
and $e$ is distinguished in $\g_0$ (cf. the proof of Theorem~\ref{prop:reg-in-g0}(\ref{item-2})). 

(4) \ We know that $\g^e_{\sf red}=\Ker(\ad e)\cap\g(0)$. Since 
$\ad e: \g_0(0) \stackrel{\sim}{\to} \g_0(2)$, we see that $\g^e_{\sf red}\subset \g_1(0)$. Hence
$\g^e_{\sf red}$ is toral. Likewise, $\g^{\edva}_{\sf red}=\Ker(\ad\edva)\cap\g(0)$ and
$\ad\edva: \g_0(0) \stackrel{\sim}{\to} \g_1(4)$. Therefore,
$\g^{\edva}_{\sf red}\subset \g_1(0)$ and $\g^{\edva}_{\sf red}$ is toral.
\end{proof}

\begin{rmk} For a $(\sigma,e)$-grading, the hypothesis $d_0(0)=d_1(4)$ implies that $d_0(0)=d_1(2)$. In
Proposition~\ref{prop:0=2}, we derived from the latter that $G(0){\cdot}e\cap\g_1(2)\ne \varnothing$.
For a divisible orbit $G{\cdot}e$, it follows from~\cite[Theorem\,1]{an} that  
the conditions $G{\cdot}e\cap\g_1\ne \varnothing$ and $G{\cdot}\edva\cap\g_1\ne \varnothing$
are equivalent (cf. also Remark~\ref{rem:Levon}). 
However, a subtle point is that if $G{\cdot}e$ is divisible and
$G{\cdot}e\cap\g_1\ne \varnothing$, then one may not simultaneously have that $e\in\g_0(2)$ and $\edva\in\g_1(4)$. For, starting with $e\in\g_0(2)$, we can certainly find 
$\edva\in\g(4)$, but then we need a stronger condition that $G(0){\cdot}\edva\cap\g_1(4)\ne\varnothing$.
\end{rmk} 
Using Theorem~\ref{prop:dostat-del}, we classify below all the pairs $(\sigma,G{\cdot}e)$ such that 
$d_0(0)=d_1(4)$. 

\begin{thm}   \label{thm:classif-ravenstvo}
For a mixed grading related to  $(\sigma,e)$, the equality $d_0(0)=d_1(4)$ holds 
exactly in the following cases: 
\begin{itemize}
\item[\sf (a)]  $\g=\GR{E}{n}$, $\sigma=\vartheta_{\sf max}$,  
and $e$ is regular in $\g_0$; here 
$\g_0=\GR{C}{4}, \GR{A}{7}, \GR{D}{8}$ and
the Dynkin--Bala--Carter labels of $G{\cdot}e$ are $\mathsf E_6(a_1)$, $\mathsf E_6(a_1)$, 
$\mathsf E_8(a_4)$ for $n=6,7,8$, respectively.
\item[\sf (b)]  $\g=\sln$, $\g_0=\son$ (hence $\sigma$ is also of maximal rank), 
and $\blb(e)=(2m_1+1,\dots,2m_s+1)$  is a partition of $n$ such that $m_{i-1}-m_{i}\ge 2$ for $i\ge 2$. 
In particular, one obtains a regular nilpotent element of\/ $\g_0=\mathfrak{so}_{2m_1+1}$ \emph{(}resp. 
$\mathfrak{so}_{2m_1+2}$\emph{)} if $s=1$ (resp. if $s=2$ and $m_2=0$).
\end{itemize}
\end{thm}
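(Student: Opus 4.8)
The plan is to extract from Theorem~\ref{prop:dostat-del} a short list of necessary conditions, reduce the identity $d_0(0)=d_1(4)$ to a count of $\tri$-constituents, and then settle the classical and exceptional pairs separately. First I would record that, by Theorem~\ref{prop:dostat-del}, the hypothesis $d_0(0)=d_1(4)$ forces $\g_0$ to be semisimple, $e$ to be distinguished in $\g_0$, and $e$ to be even in $\g$. Thus $(\g,\g_0)$ must be of type $(\mathfrak A)$ (see Section~\ref{sect:max_int_inv}), and, $e$ being even in $\g$, every $\langle e,h,f\rangle$-module occurring in $\g$ is odd-dimensional. Writing $a_k$ (resp.\ $b_k$) for the number of irreducible constituents of highest weight $2k$ inside $\g_0$ (resp.\ $\g_1$), one has $d_0(0)=\sum_{k\ge 0}a_k$ with $a_0=0$ (as $e$ is distinguished in $\g_0$) and $d_1(4)=\sum_{k\ge 2}b_k$, so the target identity becomes the combinatorial equation $\sum_{k\ge 1}a_k=\sum_{k\ge 2}b_k$. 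This restricts attention to the finitely many type-$(\mathfrak A)$ pairs together with the distinguished nilpotent orbits of $\g_0$.

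For the classical pairs I would realise $\g_0$ and $\g_1$ inside $\mathrm{End}(\BV)$ as the eigenspaces of the relevant form, so that they are the summands of $\BV\otimes\BV=S^2\BV\oplus\wedge^2\BV$. If $e$ has partition $\blb$, then $\BV=\bigoplus_i W_i$ with each $W_i$ an irreducible $\langle e,h,f\rangle$-module, and $S^2 W_i$, $\wedge^2 W_i$ and the cross terms $W_i\otimes W_j$ are decomposed by Clebsch--Gordan. For $(\sln,\son)$ one has $\g_0=\wedge^2\BV$ and $\g_1=S^2\BV\ominus\BC$; counting constituents of highest weight $0$ against those of highest weight $\ge 4$ gives, for $\blb=(2m_1{+}1,\dots,2m_s{+}1)$ with $m_1>\dots>m_s$,
\[
  d_0(0)-d_1(4)=\#\{i<j\mid m_i-m_j=1\}.
\]
Because the parts are distinct, a coincidence $m_i-m_j=1$ can occur only between neighbours, so this difference vanishes exactly when $m_{i-1}-m_i\ge 2$ for all $i$, which is case~(b) (and for $(\sln,\son)$ the involution $\sigma$ is automatically of maximal rank). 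For $(\sltn,\spn)$, where $\g_0=S^2\BV$ and $\g_1=\wedge^2\BV\ominus\BC$, the analogous count gives $d_0(0)-d_1(4)=s+\#\{i<j\mid n_i-n_j=1\}>0$ for the partition of distinct even parts $2n_i$; and for $(\son,\mathfrak{so}_p\dotplus\mathfrak{so}_q)$, $(\spn,\mathfrak{sp}_{2p}\dotplus\mathfrak{sp}_{2q})$, where $\g_1=\BV'\otimes\BV''$, the same kind of bookkeeping shows $d_0(0)>d_1(4)$ for every distinguished $e$ (the self-terms $\wedge^2\BV'$ and $\wedge^2\BV''$ always push $d_0(0)$ above $d_1(4)$). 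Hence none of these pairs contributes a solution.

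Finally, for exceptional $\g$ I would run a finite verification. For each involution $\sigma$ with $\g_0$ semisimple I list the distinguished orbits of $\g_0$, read the characteristic $h$ (hence the $\BZ$-grading) off the known tables, decompose $\g_1$ into $\langle e,h,f\rangle$-modules, and test $d_0(0)=d_1(4)$. This leaves only $\sigma=\vartheta_{\sf max}$ with $e$ regular in $\g_0=\GR{C}{4},\GR{A}{7},\GR{D}{8}$, giving $G{\cdot}e$ of type $\mathsf E_6(a_1),\mathsf E_6(a_1),\mathsf E_8(a_4)$ for $\g=\GR{E}{6},\GR{E}{7},\GR{E}{8}$, and eliminates $\GR{F}{4}$, $\GR{G}{2}$, all non-split exceptional involutions, and every non-regular distinguished $e$ in the split $\mathsf E_n$ cases. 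The main obstacle is the exhaustiveness of this exceptional bookkeeping: one must enumerate all type-$(\mathfrak A)$ involutions and all their distinguished orbits without omission, since a single overlooked orbit could conceal an extra solution --- whereas the classical half is governed cleanly by the single Clebsch--Gordan count above.
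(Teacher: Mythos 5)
Your route is genuinely different from the paper's, and it is worth saying what each buys. The paper does not argue pair-by-pair at all: from Theorem~\ref{prop:dostat-del} it extracts not only semisimplicity of $\g_0$ and distinguishedness of $e$ (which is all you use), but the much stronger filter that $G{\cdot}e$ is \emph{divisible} and that \emph{both} $e$ and $\edva$ are almost distinguished in $\g$. It then invokes the known partition description of divisible orbits: for $\g=\spn$ divisibility forces all parts of $\blb(e)$ to occur pairwise, and for $\g=\son$ it forces $\blb(\edva)$ to have two equal even parts; either way $\z_\g$ contains an $\tri$, so orthogonal and symplectic $\g$ are excluded wholesale, without ever enumerating their involutions. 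For $\sln$ the same filter yields exactly the constraint $m_{i-1}-m_i\ge 2$, and for exceptional $\g$ the candidates are found by intersecting the existing table of divisible orbits with the centraliser tables, leaving just three pairs. Your constituent-counting reformulation $\sum_{k\ge 1}a_k=\sum_{k\ge 2}b_k$ is correct (given evenness, every constituent of $\g_0$ contributes $1$ to $d_0(0)$ and every constituent of $\g_1$ of highest weight $\ge 4$ contributes $1$ to $d_1(4)$), and your Clebsch--Gordan count for $(\sln,\son)$ is exactly the computation the paper performs for its ``if'' direction; used in reverse it does give case (b). So for $\sln$ your argument is sound and self-contained, at the price of a per-pair analysis and a much larger exceptional search than the paper's table lookup.

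The genuine gap is in your dismissal of $(\son,\mathfrak{so}_p\dotplus\mathfrak{so}_q)$ and $(\spn,\mathfrak{sp}_{2p}\dotplus\mathfrak{sp}_{2q})$. The mechanism you cite --- ``the self-terms $\wedge^2\BV'$ and $\wedge^2\BV''$ always push $d_0(0)$ above $d_1(4)$'' --- is false as stated: the self-terms do \emph{not} dominate the raw constituent count of $\g_1$. Take $(\mathfrak{so}_{2k},\mathfrak{so}_k\dotplus\mathfrak{so}_k)$ with $k\ge 3$ odd and $e$ regular in $\g_0$, so $\BV'=\BV''=\sfr_{k-1}$: then $\g_0=\wedge^2\sfr_{k-1}\oplus\wedge^2\sfr_{k-1}$ has $k-1$ constituents, while $\g_1=\sfr_{k-1}\otimes\sfr_{k-1}$ has $k$ constituents. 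The inequality $d_0(0)>d_1(4)$ survives only because the constituents of $\g_1$ of highest weight $0$ and $2$ (which appear precisely when a part of $\BV'$ equals, or is adjacent to, a part of $\BV''$) do not count toward $d_1(4)$. In general, writing $s,t$ for the numbers of parts, one finds
\[
d_0(0)-d_1(4)=\tbinom{s}{2}+\tbinom{t}{2}-st+\sum_{m\ge1}(A_m-B_m)^2+L ,
\]
where $A_m,B_m$ are the numbers of parts of size $\ge m$ and $L$ counts the lost low-weight constituents; the first group of terms is \emph{negative} when $s=t$, and positivity requires playing $L$ and $\sum(A_m-B_m)^2$ against it. That is a real combinatorial argument which your sketch does not supply, and without it the exclusion of all orthogonal and symplectic pairs --- i.e.\ exhaustiveness of the classification --- is unproven. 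Either carry out this inequality honestly, or replace this step by the paper's divisibility argument, which disposes of $\son$ and $\spn$ in two lines.
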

\begin{proof} 
1) The ``only if'' part relies on the explicit description of divisible orbits. For the classical series, 
such a description is given in terms of partitions~\cite[Theorem\,3.1]{divis}, see also  below.
For the exceptional algebras, there is the list of divisible orbits~\cite[Table\,1]{divis}. By 
Theorem~\ref{prop:dostat-del}(4), if $d_0(0)=d_1(4)$, then $e$ and $\edva$ are almost distinguished. 
Let us begin with finding the pairs of orbits $G{\cdot}e$ and $G{\cdot}\edva$ such that
both $e, \edva$ are almost distinguished.

\textbullet\quad For the exceptional algebras, Table~1 in \cite{divis}, together with the known 
information on the reductive part of centralisers~\cite{alela}, shows that there are only three such pairs 
of orbits. This leads to the three $\GR{E}{n}$-cases, see Example~\ref{ex:0=4}. 

\textbullet\quad For $\son$ and $\spn$, an inspection of partitions $\blb(e)$ of the divisible orbits shows that there are no pairs $(e,\edva)$ such that both orbits are almost distinguished. More precisely,

{\bf --}  \ For $\g=\spn$, $e$ is divisible if and only if all parts of $\blb(e)$ are odd and occur pairwise.
This already implies that $e$ is not almost distinguished, cf.~\cite[IV.2.25]{ss} or~\cite[Theorem\,6.1.3]{CM}, where a description of $\g^e_{\sf red}$ is given in terms of $\blb(e)$.

{\bf --}  \ For $\g=\son$, $e$ is divisible if and only if all parts $\lb_i$ are odd and also

\qquad $\left\{\begin{array}{ll}  
\text{if } \lb_{2k-1}=4m+3, & \text{then } \lb_{2k}=4m+3 ; \\
\text{if } \lb_{2k-1}=4m+1>1, & \text{then } \lb_{2k}\in \{4m+1,4m-1\} ; \\
\text{if } \lb_{2k-1}=1, & \text{then there is no further conditions} .\\
\end{array}\right.$
\\[.6ex]
The partition $\blb(\edva)$ is obtained by the following rule applied to each pair 
$(\lb_{2k-1},\lb_{2k})$ of consecutive parts of $\blb(e)$:

\qquad $\left\{\begin{array}{lcl}  
(\dots,4m+3,4m+3,\dots) &  \mapsto & (\dots,2m+2,2m+2,2m+1,2m+1,\dots); \\
(\dots,4m+1,4m+1,\dots) &  \mapsto & (\dots,2m+1,2m+1,2m,2m,\dots); \\
(\dots,4m+1,4m-1,\dots) &  \mapsto & (\dots,2m+1,2m,2m,2m-1,\dots).\\
\end{array}\right.$
\\[.6ex]
In all cases, $\blb(\edva)$ has at least two equal {\bf even} parts, which yields a subalgebra 
$\tri=\mathfrak{sp}_2$ in $\z_\g(\edva)$.

\textbullet\quad Let $\g=\sln$ and $\blb(e)= (\lb_1,\dots,\lb_s)$, where $\lb_1\ge \dots \ge\lb_s$. Then $e$ is 
divisible if and only if all $\lb_i$'s are odd, say $\lb_i=2m_i+1$; and  $e$ is almost distinguished if and 
only if all $\lb_i$'s are different, i.e., $m_i> m_{i+1}$. Next, the set of parts of $\blb(\edva)$ is
$\{m_i+1,m_i\mid i=1,\dots,s\}$~\cite[Theorem\,3.1]{divis}. Since $\edva$ is almost distinguished as well, 
all these parts must be different, too. Hence 
$m_i>m_{i+1}+1$.
\\ \indent Since $\g_0$ has to be semisimple by Theorem~~\ref{prop:dostat-del}(3), $\sigma$ is outer, 
i.e., $\g_0=\mathfrak{sp}_n$ (if $n$ is even) or $\son$. But a partition with different odd parts does not 
correspond to a nilpotent orbit in $\mathfrak{sp}_n$~\cite[Theorem\,5.1.3]{CM}. Hence $\g_0=\son$. 

2) The "if" part follows by direct calculations, cf. Example~\ref{ex:0=4} below. Let us give some details for 
the case of $\sln=\slv$. Let $\sfr_i$ denote the simple $\tri$-module of dimension $i+1$.
If $\blb(e)=(2m_1+1,\dots,2m_s+1)$, $\g_0=\son$, and $\tri\simeq\lg e,h,f\rg\subset\g_0$, then
$\BV=\sfr_{2m_1}+\dots +\sfr_{2m_s}$ as $\tri$-module. Therefore
\begin{gather*}
\g_0=\wedge^2(\sfr_{2m_1}+\dots +\sfr_{2m_s})=
\wedge^2\sfr_{2m_1}+\dots +\wedge^2\sfr_{2m_s}+\sum_{i<j}\sfr_{2m_i}\otimes\sfr_{2m_j}, \\
\g_1=\eus S^2(\sfr_{2m_1}+\dots +\sfr_{2m_s})-\sfr_0=
\eus S^2\sfr_{2m_1}+\dots +\eus S^2\sfr_{2m_s}+\sum_{i<j}\sfr_{2m_i}\otimes\sfr_{2m_j}-\sfr_0 .
\end{gather*}
Using the Clebsch--Gordan formula (see e.g.~\cite[3.2.4]{spr77}), we see that if $m_i-m_{i+1}\ge 2$, then
{\it\bfseries (1)} \ the total number of $\tri$-modules in $\g_0$ equals the number of nontrivial $\tri$-modules in $\g_1$, i.e., $d_0(0)=d_1(2)$, and {\it\bfseries (2)} \ $\g_1$ contains no $\sfr_2$, i.e., $d_1(2)=d_1(4)$. \\ \indent
We also notice that here $d_0(0)=\sum_{j=1}^s (2j-1)m_j+\binom{s}2$ and $d_1(0)-d_0(0)=s-1$.  
\end{proof}

\begin{rmk}
The constraints on $\blb$ for $\sln$ exclude exactly the cases with $n=2,4$. This means that only those $n$ are allowed for which $(\sln)^\sigma=\son$ is again simple.
\end{rmk}
\begin{ex}   
\label{ex:0=4}
{\sf (1)} \ Let us provide the numbers $d_j(i)=\dim\g_j(i)$ with $i\ge 0$ for the $\GR{E}{n}$-cases in
Theorem~\ref{thm:classif-ravenstvo}.

\begin{center}
 $\begin{array}{c}\g=\GR{E}{6}, \g_0=\GR{C}{4}\\ 
   G{\cdot}e=\mathsf E_6(a_1)\end{array}$: \qquad \begin{tabular}{c|ccccccccc}
   $i$              & 0 & 2 & 4 & 6 & 8 & 10 & 12 & 14 &16 \\ \hline
$d_0(i)$ & \rule{0pt}{2.5ex}\fbox{\color{violet}4} & 4 & 3 & 3 & 2 &  2 & 1 & 1 & -- \\  
$d_1(i)$ & 4 & 4 & \fbox{\color{violet}4} & 3 & 3 &  2 & 1 & 1 & 1 \\  
\end{tabular}
\end{center}

\noindent
Here $\g_0\simeq\sfr_2+\sfr_6+\sfr_{10}+\sfr_{14}$ and
$\g_1\simeq\sfr_4+\sfr_8+\sfr_{10}+\sfr_{16}$ as $\lg e,h,f\rg$-modules.
\\ \rule{4cm}{.5pt} 
\begin{center}
$\begin{array}{c}\g=\GR{E}{7}, \g_0=\GR{A}{7}\\ 
   G{\cdot}e=\mathsf E_6(a_1)\end{array}$
 : \qquad \begin{tabular}{c|ccccccccc}
   $i$              & 0 & 2 & 4 & 6 & 8 & 10 & 12 & 14 &16 \\ \hline
$d_0(i)$ & \rule{0pt}{2.5ex}\fbox{{\color{violet}7}} & 7 & 6 & 5 & 4 &  3  &  2 &  1 & -- \\  
$d_1(i)$ & 8 & 7 & \fbox{\color{violet}7} & 5 & 5 &  3 &  2 & 1 & 1 \\  
\end{tabular}
\end{center}

\noindent
Here $\g_0\simeq \sum_{i=1}^7 \sfr_{2i}$ and
$\g_1\simeq\sfr_0+2\sfr_4+2\sfr_8+\sfr_{10}+\sfr_{12} +\sfr_{16}$ as $\lg e,h,f\rg$-modules.
\\ \rule{4cm}{.5pt} 
\begin{center}
 $\begin{array}{c}\g=\GR{E}{8}\\ \g_0=\GR{D}{8} \\ G{\cdot}e=\mathsf E_8(a_4)
 \end{array}$: \quad \begin{tabular}{c|ccccccccccccccc}
   $i$              & 0 & 2 & 4 & 6 & 8 & 10 & 12 & 14 &16 &18 & 20 & 22 & 24 & 26 & 28 \\ \hline
$d_0(i)$ & \rule{0pt}{2.5ex}
\fbox{{\color{violet}8}} & 8 & 7 & 7 & 6 & 6 & 5 & 5 & 3 & 3 & 2 & 2 & 1 & 1 & -- \\  
$d_1(i)$ & 8 & 8 & \fbox{\color{violet}8} & 7 & 7 & 6 & 5 & 5 & 4 & 3 & 2 & 2 & 1 & 1 & 1\\  
\end{tabular}
\end{center}

\noindent
Here $\g_0\simeq\sfr_2+\sfr_6+\sfr_{10}+2\sfr_{14}+\sfr_{18}+\sfr_{22}+\sfr_{26}$ \\ and 
$\g_1\simeq\sfr_4+\sfr_8+\sfr_{10}+\sfr_{14}+\sfr_{16}+\sfr_{18}+\sfr_{22}+\sfr_{28}$ as $\lg e,h,f\rg$-modules.

{\sf (2)} \ The following is a sample of calculations for the $\sln$-case.
\begin{center}
 \begin{tabular}{c} $\g=\mathfrak{sl}_{2n}$,\\ $e\sim (2m+1,2k+1)$ \\ $m{+}k=n{-}1,\ m{-}k{>}1$
 \end{tabular}: 
\quad 
 \begin{tabular}{c|cccccc}
   $i$              & 0 & 2 & 4 & $\cdots$ & $4m{-}2$& $4m$  \\ \hline
$d_0(i)$ & \rule{0pt}{2.5ex}\fbox{\color{violet}m+3k+1} & m+3k+1 & m+3k-1 & $\cdots$ & $1$ & -- \\  
$d_1(i)$ & m+3k+2 & m+3k+1 & \fbox{\color{violet}m+3k+1} & $\cdots$ & $1$ & 1  \\  
\end{tabular}
\end{center}
\end{ex}

\begin{rmk}  \label{rem:raznitsa=1}
There are many examples of mixed gradings related to some $(\sigma, e)$ such that 
$d_0(0)> d_1(4)$ and $e\in\g_0(2)$ is divisible in $\g$. Hence there is $\edva\in\g(4)$.
However, it is then not always the case that  
$G{\cdot}\edva\cap\g_1(4)\ne \varnothing$. 
\end{rmk}

%%%%%%%%%%%%    Section 6     %%%%%%%%%%
\section{New involutions from old ones}      
 \label{sect:new-invol}

\noindent
Let $\sigma$ be an involution of a simple Lie algebra $\g$ and $e$ a regular nilpotent element of 
$\g^\sigma=\g_0$. If $e$ remains even in $\g$, then one defines
another involution having nice properties, which is denoted by $\Upsilon(\sigma)$ or $\check\sigma$. 
It is always assumed below that $e\in\g_{0,{\sf reg}}$ is even in $\g$. By Proposition~\ref{thm:cox-even}, 
this only excludes the inner involutions of $\mathfrak{sl}_{2n+1}$.

{\bf Definition/construction of }$\Upsilon(\sigma)=\check\sigma$. \\
Let $\g=\g_0\oplus\g_1$ be the $\BZ_2$-grading corresponding to $\sigma$ and $\{e,h,f\}\subset\g_0$ 
a principal $\tri$-triple in $\g_0$. Consider the $\BZ$-grading of $\g$ determined by 
$h$ and, assuming that $e$ is even in $\g$,  set 
\beq    \label{eq:sigma2}
   \g_{\{0\}}=\bigoplus_{i \text{ even}}\g(2i), \ \g_{\{1\}}=\bigoplus_{i \text{ odd}}\g(2i) .
\eeq 
Then $\g=\g_{\{0\}}\oplus \g_{\{1\}}$ and 
$\check{\sigma}$ is the involution associated with this $\BZ_2$-grading, i.e., $\g_{\{i\}}=\g_i^{(\check\sigma)}$.

{\bf First properties of $\Upsilon$ (the passage $\sigma\mapsto \check\sigma$)}: \\
{\sf (1)} \quad $\check{\sigma}$ is inner (for, $\g^{\check\sigma}=\g_{\{0\}}\supset \z_\g(h)$, and the latter contains a Cartan subalgebra of $\g$).
\\
{\sf (2)}  \quad  $e\in\g_0(2)\subset \g_0$ and therefore $e\in \g_{\{1\}}$. 
\\
{\sf (3)}  \quad The involutions $\sigma$ and $\check{\sigma}$ commute; hence $\sigma\check\sigma$
is also an involution.
\\
{\sf (4)}  \quad  $\sigma$ and $\sigma\check\sigma$ belong to the same connected component of the 
group ${\sf Aut}(\g)$; therefore, $\sigma$ is inner if and only if $\sigma\check\sigma$ is inner.

For $\g\ne\mathfrak{sl}_{2n+1}$, we think of $\Upsilon$ as a map from $\mathsf{Inv}(\g)$ to 
the set of inner involutions of $\g$. \\
Let $\g=\g_{[0]}\oplus\g_{[1]}$ be the $\BZ_2$-grading corresponding to 
$\sigma\check\sigma$, i.e., $\g_{[i]}=\g_i^{(\sigma\check\sigma)}$. Then
\beq   \label{eq:sigma3}
   \g^{\sigma\check\sigma}=\g_{[0]}=\bigoplus_{i\in \BZ}\g_{\bar i}(2i)\quad  \text{and } \quad 
   \g_{[1]}=\bigoplus_{i\in \BZ}\g_{\ov{i+1}}(2i),
\eeq
where $\bar i$ is the image of $i$ in $\BZ/2\BZ$. In particular, $e\in \g_{[1]}$.

\begin{prop}    \label{prop:IBN2}
For any $\sigma\in {\sf Inv}(\g)$, the Satake diagrams $\mathsf{Sat}(\check\sigma)$ and 
$\mathsf{Sat}(\sigma\check\sigma)$ have {\sl IBN}. Moreover, the set of black nodes of either of them
is contained in the set of zeros of\/ $\eus D(e)$.
\end{prop}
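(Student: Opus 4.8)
The plan is to deduce the statement directly from Proposition~\ref{prop:IBN}, applied in turn to the two involutions $\sigma'=\check\sigma$ and $\sigma'=\sigma\check\sigma$. The hypothesis of that proposition fits the present setting verbatim: by assumption $e$ is a regular nilpotent element of $\g^\sigma=\g_0$, so $e\in\g_{0,{\sf reg}}^{(\sigma)}\cap\N$, and $\eus D(e)$ is the weighted Dynkin diagram of the single orbit $G{\cdot}e$ appearing throughout. Thus it suffices to check that $G{\cdot}e$ meets the $(-1)$-eigenspace of each of $\check\sigma$ and $\sigma\check\sigma$.

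Both intersection conditions are already built into the construction of $\Upsilon$. Since $\g_{\{i\}}=\g_i^{(\check\sigma)}$ and, as recorded in the first properties of $\Upsilon$, $e\in\g_0(2)\subset\g_{\{1\}}$, we have $e\in G{\cdot}e\cap\g_1^{(\check\sigma)}$, so this intersection is nonempty. For $\sigma\check\sigma$, recall that $\sigma$ and $\check\sigma$ commute, so $\sigma\check\sigma\in\mathsf{Inv}(\g)$, that $\g_{[i]}=\g_i^{(\sigma\check\sigma)}$, and that~\eqref{eq:sigma3} describes $\g_{[1]}=\bigoplus_{i\in\BZ}\g_{\ov{i+1}}(2i)$; taking $i=1$ gives the summand $\g_{\ov 0}(2)=\g_0(2)$, whence $e\in\g_0(2)\subset\g_{[1]}=\g_1^{(\sigma\check\sigma)}$ and $G{\cdot}e\cap\g_1^{(\sigma\check\sigma)}\ne\varnothing$.

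With these two nonempty intersections established, I would invoke Proposition~\ref{prop:IBN} once with $\sigma'=\check\sigma$ and once with $\sigma'=\sigma\check\sigma$. In each case its conclusion simultaneously delivers that the Satake diagram in question has only isolated black nodes and that those black nodes lie among the zeros of $\eus D(e)$, which is exactly the assertion. There is no substantive obstacle: the real work was done in the design of $\Upsilon$, whose defining feature is precisely that the same regular nilpotent $e\in\g_0$ that determines $\eus D(e)$ lands in $\g_1^{(\check\sigma)}$ and in $\g_1^{(\sigma\check\sigma)}$, so Proposition~\ref{prop:IBN} applies directly and no further analysis of the Satake diagrams is needed.
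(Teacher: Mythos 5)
Your proof is correct and is essentially the paper's own argument: the paper likewise observes that $e\in\g_1^{(\check\sigma)}$ and $e\in\g_1^{(\sigma\check\sigma)}$ (the latter via~\eqref{eq:sigma3}) and then applies Proposition~\ref{prop:IBN} to each of the two involutions. Your write-up merely makes explicit the membership checks that the paper states in one line.
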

\begin{proof}
This readily follows from Proposition~\ref{prop:IBN}, since
$e\in\g_1^{(\check\sigma)}$ and $e\in\g_1^{(\sigma\check\sigma)}$.
\end{proof}

The Satake diagrams with {\sl IBN}\/ can be extracted from~\cite[Table~4]{t41} or tables in~\cite{spr87}. 
One can also use information on generic stabilisers for $\g_0$-modules $\g_1$ (cf. the proof of 
Prop.~\ref{prop:IBN}). For instance, let $\sigma_{n,m}$ be an involution of 
$\g=\mathfrak{so}_{n+m}$ such that $\g^{\sigma_{n,m}}=\mathfrak{so}_n\dotplus\mathfrak{so}_m$.
The $\g_0$-module $\g_1$ is isomorphic to $\BC^n\otimes\BC^m$ and  a generic 
stabiliser is $\mathfrak{so}_{|n-m|}$. Therefore, $\mathsf{Sat}(\sigma_{n,m})$ has only {\sl IBN\/} if and only if 
$|n-m|\le 4$. Another example is that the Satake diagram of the {\sl PI\/}--involution $\vartheta$ has no 
black nodes at all (but has some arrows if $\g=\GR{A}{n},\,\GR{D}{2n+1},$ or $\GR{E}{6}$). 

Recall from~\cite{Dy52} that a subalgebra $\ka$ of $\g$ is said to be {\it regular}, if it is normalised by a
Cartan subalgebra. For $\sigma$ inner,  $\g^\sigma=\g_0$ is a regular reductive subalgebra. Therefore,
if $e\in\g_{0,\sf reg}\cap\N$, then $[\g_0,\g_0]$ is a ``minimal including regular subalgebra'' (=\,\textsf{MIRS}) for 
$G{\cdot}e$ (in the terminology of \cite{Dy52}). For $\g$ exceptional and $e\in\N$, Dynkin explicitly
determined all \textsf{MIRS} for $G{\cdot}e$, see Tables~16-20 in~\cite{Dy52}. (A few 
inaccuracies has been corrected in~\cite{alela}). Therefore, it is rather easy to determine $G{\cdot}e$ and
$\eus D(e)$ for the inner involutions.

\begin{ex}   \label{ex:E7|D6A1}
For $(\g,\g_0)=(\GR{E}{7}, \GR{D}{6}\dotplus\GR{A}{1})$ and $e\in \g_{0,{\sf reg}}$, the subalgebra
$\GR{D}{6}\dotplus\GR{A}{1}$ is a \textsf{MIRS} of $\g$ for $G{\cdot}e$. 
Then browsing Table~19 in \cite{Dy52} or Table~5 in \cite{alela} one
finds $\eus D(e)$. This orbit is denoted nowadays as
$\mathsf E_7(a_3)$ and 
$\eus D(e)=$ \raisebox{-3.6ex}{\begin{tikzpicture}[scale= .53, transform shape]
\tikzstyle{every node}=[circle, draw, fill=white!55]
\node (a) at (0,0) {\bf 2};
\node (b) at (1.2,0) {\bf 2};
\node (c) at (2.4,0) {\bf 0};
\node (d) at (3.6,0) {\bf 2};
\node (e) at (4.8,0) {\bf 0};
\node (f) at (6,0) {\bf 2};
\node (g) at (3.6,-1.2) {\bf 0};
\foreach \from/\to in {a/b, b/c, c/d, d/e, e/f, d/g}  \draw[-] (\from) -- (\to);
\end{tikzpicture}}\ .
Since the Satake diagram for $(\g,\g_0)$ is 
$\text{\begin{picture}(87,22)(14,0)
\setlength{\unitlength}{0.016in} 
\multiput(23,15)(15,0){5}{\line(1,0){9}} \put(65,0){\circle*{5}}
\multiput(65,15)(15,0){3}{\circle{5}}
\put(65,3){\line(0,1){9}}
\multiput(20,15)(30,0){2}{\circle*{5}} \put(35,15){\circle{5}}
\end{picture}\quad }\ $, 
we see that here $G{\cdot}e\cap\g_1=\varnothing$ (cf. Proposition~\ref{prop:IBN}).
\end{ex}

By \cite[Theorem\,6]{kr71}, there is a unique maximal nilpotent orbit $G{\cdot}e'$ such that 
$G{\cdot}e'\cap\g_1\ne\varnothing$. Here $\eus D(e')$ is obtained from $\mathsf{Sat}(\sigma)$ as follows.
Set $\ap(h')=2$ (resp. $\ap(h')=0$)
if $\ap\in\Pi$ represents a white (resp. black) node of $\mathsf{Sat}(\sigma)$.

It is not hard to determine the conjugacy class of both $\Upsilon(\sigma)=\check\sigma$ and
$\sigma\check\sigma$ for {\bf all} $\sigma\in \mathsf{Inv}(\g)$, excluding the inner involutions of 
$\sltno$. First, one describes the structure of $\g_0$ and $\g_1$ as $\lg e,h,f\rg$-module. For the 
classical $\g$, one can use the partition $\blb(e)$; while if $\g$ is exceptional, then tables of \cite{law95} 
are helpful. This allows us easily to 
compute $\dim\g_{\{0\}}-\dim\g_{\{1\}}$ and $\dim\g_{[0]}-\dim\g_{[1]}$. And the conjugacy class of 
$\tilde\sigma$ such that $\mathsf{Sat}(\tilde\sigma)$ 
has only {\sl IBN}\/ is uniquely determined by $\dim\g^{\tilde\sigma}$, see below.

\begin{lm}   \label{lm:new-differ}
If\/ $\g_0=\sum_{k\ge 0} m_{0,k}\sfr_{2k}$ and  $\g_1=\sum_{k\ge 0} m_{1,k}\sfr_{2k}$ as 
$\lg e,h,f\rg$-modules, then
\begin{gather}   \label{eq:differ-2}
 \dim\g_{\{0\}}-\dim\g_{\{1\}}=\sum_{k\ge 0}(-1)^k m_{0,k}+ \sum_{k\ge 0}(-1)^k m_{1,k} \quad 
 (\text{for }\check\sigma);
\\    \label{eq:differ-3}
\dim\g_{[0]}-\dim\g_{[1]}=\sum_{k\ge 0}(-1)^k m_{0,k}+ \sum_{k\ge 0}(-1)^{k+1} m_{1,k} \quad 
(\text{for }\sigma\check\sigma) . 
\end{gather}
\end{lm}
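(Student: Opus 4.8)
The plan is to recognise both left-hand sides as signed traces of the relevant involutions and then reduce everything to the $h$-weight combinatorics of the irreducible $\tri$-modules $\sfr_{2k}$. Since $e$ is even in $\g$, the $\BZ$-grading $\g=\bigoplus_i\g(2i)$ determined by $h$ has only even weights, which is exactly what makes the gradings~\eqref{eq:sigma2} and~\eqref{eq:sigma3} well-defined and what forces every irreducible summand of $\g_0$ and $\g_1$ to be of the form $\sfr_{2k}$. By~\eqref{eq:sigma2}, $\check\sigma$ acts on $\g(2i)$ by the scalar $(-1)^i$, so $\dim\g_{\{0\}}-\dim\g_{\{1\}}=\sum_{i\in\BZ}(-1)^i\dim\g(2i)$. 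Likewise, writing $\g_j(2i)=\g_j\cap\g(2i)$, the involution $\sigma$ acts on $\g_j(2i)$ by $(-1)^j$ and $\check\sigma$ by $(-1)^i$, so $\sigma\check\sigma$ acts by $(-1)^{i+j}$; comparing with~\eqref{eq:sigma3} gives $\dim\g_{[0]}-\dim\g_{[1]}=\sum_{i,j}(-1)^{i+j}\dim\g_j(2i)$.

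First I would compute the graded dimensions from the module decompositions. Since $\sfr_{2k}$ has $h$-weights $-2k,-2k+2,\dots,2k$, each with multiplicity one, its weight-$2i$ space is one-dimensional exactly when $|i|\le k$ and zero otherwise. Hence $\dim\g_j(2i)=\sum_{k\ge|i|}m_{j,k}$ and $\dim\g(2i)=\sum_{k\ge|i|}(m_{0,k}+m_{1,k})$. The key step is then the elementary identity $\sum_{i=-k}^{k}(-1)^i=(-1)^k$, which holds because the sum runs over $2k+1$ consecutive integers and hence equals the common endpoint value $(-1)^{\pm k}$. Substituting the expressions for $\dim\g_j(2i)$ and interchanging the two finite summations, each module $\sfr_{2k}$ contributes $m_{j,k}\sum_{i=-k}^{k}(-1)^i=(-1)^k m_{j,k}$ to the alternating weight sum $\sum_i(-1)^i\dim\g_j(2i)$. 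For $\check\sigma$ this yields $\sum_k(-1)^k(m_{0,k}+m_{1,k})$, which is~\eqref{eq:differ-2}; for $\sigma\check\sigma$ the additional factor $(-1)^j$ splits the sum into $\sum_j(-1)^j\sum_k(-1)^k m_{j,k}=\sum_k(-1)^k m_{0,k}-\sum_k(-1)^k m_{1,k}$, which is~\eqref{eq:differ-3}.

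There is no serious obstacle: the entire content is the sign-bookkeeping above, so the main thing to get right is the trace interpretation of the two differences and the endpoint parity in $\sum_{i=-k}^{k}(-1)^i$. I would verify the sign identity on the base cases $k=0,1$ to be safe, and I would remark explicitly that the evenness of $e$ in $\g$ (guaranteed here by the standing assumption, cf. Proposition~\ref{thm:cox-even}) is precisely what ensures that only modules $\sfr_{2k}$ with even top weight occur, so that every $\g_j(2i)$ sits in a fixed eigenspace of both $\sigma$ and $\check\sigma$ and the alternating sums make sense.
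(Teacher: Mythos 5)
Your proof is correct and follows essentially the same route as the paper: both arguments reduce to the observation that each irreducible summand $\sfr_{2k}$ contributes $(-1)^k$ to the relevant dimension difference (with the extra sign $(-1)^{j}$ for summands of $\g_1$ in the $\sigma\check\sigma$ case), obtained by counting even- versus odd-indexed weight spaces of $\sfr_{2k}$. Your signed-trace formulation and the identity $\sum_{i=-k}^{k}(-1)^i=(-1)^k$ are just a repackaging of the paper's direct count $\dim(\sfr_{2k}\cap\g_{\{0\}})=k$ or $k+1$ according to the parity of $k$.
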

\begin{proof}
It follows from \eqref{eq:sigma2} and \eqref{eq:sigma3} that
$\g_0\cap \g_{\{0\}}=\g_0\cap \g_{[0]}$ and the $h$-eigenvalues here are multiples of $4$. Therefore,
if $\sfr_{2k}\subset\g_0$, then 
\[
    \dim (\sfr_{2k}\cap \g_{\{0\}})=\dim (\sfr_{2k}\cap \g_{[0]})=\begin{cases}
    k, & \text{ if $k$ is odd} \\ k+1, & \text{ if $k$ is even} \end{cases}.
\]
On the other hand,  $\g_1\cap \g_{\{0\}}=\g_1\cap \g_{[1]}$. Therefore
if $\sfr_{2k}\subset\g_1$, then
\[
    \dim (\sfr_{2k}\cap \g_{\{0\}})=\dim (\sfr_{2k}\cap \g_{[1]})=\begin{cases}
    k, & \text{ if $k$ is odd} \\ k+1, & \text{ if $k$ is even} \end{cases}.
\]
This means that  $\sfr_{2k}\subset \g_0$ contributes `$(-1)^k$' to both 
differences in the lemma, while $\sfr_{2k}\subset \g_1$ contributes `$(-1)^k$' to~\eqref{eq:differ-2} and
`$(-1)^{k+1}$' to~\eqref{eq:differ-3}.
\end{proof}
 
\begin{rmk}    \label{rmk:differ-classic}
For classical Lie algebras, the structure of $\g_0$ and $\g_1$ as $\lg e,h,f\rg$-modules usually involves 
terms of the form $\sfr_m\otimes\sfr_{m+2k}$ or $\eus S^2 \sfr_m$ or $\wedge^2 \sfr_m$. Then it is 
easy to compute the contribution of the whole such aggregates to the differences in 
Lemma~\ref{lm:new-differ}, see Example~\ref{ex:Dn-hermit} below. 
\end{rmk}

\begin{lm}    \label{lm:raznost-IBN}
If the Satake diagram of $\sigma\in \mathsf{Inv}(\g)$ has only {\sl IBN}, then 
\[
 \dim\g_1-\dim\g_0=\rk(\g)-2{\cdot}\#\{\text{arrows}\}- 4{\cdot}\#\{\text{black nodes}\} .
\]
Furthermore, for given $\g$, this quantity distinguishes different Satake diagrams with {\sl IBN}.
\end{lm}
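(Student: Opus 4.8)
The plan is to evaluate both sides on a generic semisimple element of $\g_1$ and then read off the relevant dimensions and ranks from the combinatorics of $\mathsf{Sat}(\sigma)$. Fix a generic semisimple $x\in\g_1$, so that $\g_1^x$ is a Cartan subspace of $\g_1$; set $\ell=\dim\g_1^x$. By the Kostant--Rallis identity recalled in Section~\ref{sect:prelim} (i.e.\ $\dim\g_1-\dim\g_1^x=\dim\g_0-\dim\g_0^x$, see~\cite{kr71}), the left-hand side reduces at once to a centraliser computation:
\[
   \dim\g_1-\dim\g_0=\dim\g_1^x-\dim\g_0^x=\ell-\dim\g_0^x .
\]
It remains to express $\dim\g_0^x$ and $\ell$ through the numbers $a:=\#\{\text{arrows}\}$ and $b:=\#\{\text{black nodes}\}$ of $\mathsf{Sat}(\sigma)$.

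First I would compute $\dim\g_0^x$. The reductive algebra $\g_0^x$ splits as $\g_0^x=[\g_0^x,\g_0^x]\oplus\z(\g_0^x)$. Under the {\sl IBN\/} hypothesis the black-node subdiagram is $[\g_0^x,\g_0^x]\simeq(\tri)^{b}$ (cf.\ Proposition~\ref{prop:IBN} and Section~\ref{sect:prelim}), so $\dim[\g_0^x,\g_0^x]=3b$ and $\rk([\g_0^x,\g_0^x])=b$. By the description of $\mathsf{Sat}(\sigma)$, the number of arrows equals $a=\dim(\g_0^x/[\g_0^x,\g_0^x])=\dim\z(\g_0^x)$. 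Hence $\dim\g_0^x=3b+a$ and $\rk(\g_0^x)=b+a$.

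Next I would pin down $\ell$ by a rank count. Since $x$ is semisimple, $\g^x=\g_0^x\oplus\g_1^x$ is reductive of full rank $r:=\rk(\g)$. For generic $x$ one has $\g_0^x=\z_{\g_0}(\g_1^x)$, so $\g_1^x$ is central in $\g^x$; therefore every Cartan subalgebra of $\g^x$ contains $\g_1^x$ and splits as (a Cartan subalgebra of $\g_0^x$)$\,\oplus\,\g_1^x$. This yields $r=\rk(\g_0^x)+\ell=(a+b)+\ell$, i.e.\ $\ell=r-a-b$. Substituting into the reduction above gives
\[
   \dim\g_1-\dim\g_0=\ell-\dim\g_0^x=(r-a-b)-(3b+a)=r-2a-4b ,
\]
which is the asserted identity.

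For the concluding assertion I expect the only genuine work — and the main obstacle — to be a finite verification rather than a conceptual argument. The formula shows that for fixed $\g$ the quantity $\dim\g_1-\dim\g_0=r-2a-4b$ is an affine function of $a+2b$, so it separates two {\sl IBN\/} Satake diagrams precisely when their values of $a+2b$ differ. I would therefore list the Satake diagrams of $\g$ having only isolated black nodes — a short, explicit list for each simple type, read off from~\cite[Table\,4]{t41} or the tables in~\cite{spr87} — and check directly that the integer $a+2b$ takes pairwise distinct values on them. This case-by-case inspection is routine but, as far as I can see, unavoidable: I do not expect a uniform reason forcing injectivity beyond the numerics of the classification.
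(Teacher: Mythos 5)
Your proof is correct and takes essentially the same route as the paper's: both reduce $\dim\g_1-\dim\g_0$ to a generic semisimple $x\in\g_1$ via the Kostant--Rallis constancy of $\dim\g_1^x-\dim\g_0^x$, compute $\dim\g_0^x=3b+a$ from the {\sl IBN\/} structure of $\mathsf{Sat}(\sigma)$, obtain $\dim\g_1^x=\rk(\g)-a-b$ (the paper quotes this as the white-nodes-minus-arrows count, while you rederive it by a rank count in the Levi subalgebra $\g^x$), and both settle the final distinguishing claim by the same finite inspection of the classified {\sl IBN\/} diagrams.
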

\begin{proof}
For $x\in \g_1$, the value $\dim\g^x_1-\dim\g^x_0$ does not depend on 
$x$~\cite[Proposition\,5]{kr71}. Hence the value for $x=0$, which we need, can be computed via a 
generic semisimple $x\in\g_1$. Then $\g^x$ is a Levi subalgebra of $\g$ whose semisimple part 
corresponds to the subdiagram of black nodes of $\mathsf{Sat}(\sigma)$ and
$\g^x_1$ is Cartan subspace of $\g_1$. Therefore
\[
\dim\g^x_1=\#\{\text{white nodes}\}-\#\{\text{arrows}\}=
\rk(\g)-\#\{\text{black nodes}\}-\#\{\text{arrows}\}
\] 
and $\dim\g^x_0=\dim [\g^x,\g^x]+\#\{\text{arrows}\}$. Finally,
if $\mathsf{Sat}(\sigma)$ has only {\sl IBN}, then $[\g^x,\g^x]\simeq (\tri)^k$, where 
$k=\#\{\text{black nodes}\}$.

Now, a straightforward verification shows that, for a fixed $\g$ and the Satake diagrams with only {\sl IBN}, the quantities \ \ 
$\#\{\text{arrows}\}+2{\cdot}\#\{\text{black nodes}\}$ are all different.
\end{proof}
\begin{ex}  \label{ex:Dn-hermit}
Let $\g=\sone=\mathfrak{so}(\BV)$ and let $\sigma\in \mathsf{Inv}(\sone)$ be such that $\g^\sigma=\gln$. Then $\sigma$ is inner, $\dim\g_0-\dim\g_1=n$, and $\mathsf{Sat}(\sigma)$ is
\begin{gather*}
\text{\begin{picture}(150,24)(0,5)
\put(10,8){\circle*{6}}    % first 
\put(30,8){\circle{6}}     %  second 
\put(13,8){\line(1,0){14}}
\multiput(70,8)(40,0){2}{\circle*{6}} 
\put(90,8){\circle{6}}
\multiput(73,8)(20,0){2}{\line(1,0){14}}
\multiput(130,-2)(0,20){2}{\circle{6}}
\put(113,10){\line(2,1){13}}
\put(113,6){\line(2,-1){13}}
\multiput(33,8)(29,0){2}{\line(1,0){5}}   % two short lines
\put(42,5){$\cdots$}
{\color{blue}\put(137,8){\oval(20,20)[r]}}
{\color{blue}\put(133,18){\vector(-1,0){2}}}
{\color{blue}\put(130,-2){\vector(-1,0){2}}}
\end{picture}, \ if $n$ is odd}; \\
\text{ \begin{picture}(150,30)(0,3)
\put(10,8){\circle*{6}}    % first 
\put(30,8){\circle{6}}     %  second 
\put(13,8){\line(1,0){14}}
\put(70,8){\circle*{6}} 
\put(90,8){\circle{6}}
\put(73,8){\line(1,0){14}}
\put(110,-2){\circle{6}}
\put(110,18){\circle*{6}}
\put(93,10){\line(2,1){13}}
\put(93,6){\line(2,-1){13}}
\multiput(33,8)(29,0){2}{\line(1,0){5}}   % two short lines
\put(42,5){$\cdots$}
\end{picture}, \ if $n$ is even}.
\end{gather*}
These different Satake diagrams suggest that $\check\sigma$ and $\sigma\check\sigma$ might also
depend on the parity of $n$. Indeed, if $e\in\g_{0,\sf reg}\cap\N$, then $\blb(e)=(n,n)$ and 
$\BV=\sfr_{n-1}\oplus\sfr_{n-1}$. Therefore
$\g_0=\sfr_{n-1}\otimes\sfr_{n-1}$ and $\g_1=2{\cdot}\wedge^2\sfr_{n-1}$. The number of simple
$\tri$-modules in $\g_0$ (resp. $\g_1$) equals $n$ (resp. $2{\cdot}[n/2]$).

{\sf (i)} \  Suppose that $n=2m+1$. Using Lemma~\ref{lm:new-differ} and the Clebsch--Gordan 
formulae, we see that the contribution of all $\tri$-modules in $\g_0$ (resp. $\g_1$) to the difference
$\dim\g_{\{0\}}-\dim\g_{\{1\}}$ equals $1$ (resp. $-2m=-2{\cdot}[n/2]$). Hence
$\dim\g_{\{0\}}-\dim\g_{\{1\}}=1-2m=-(\rk\g-2)$. By Lemma~\ref{lm:raznost-IBN},
this can only happen if $\mathsf{Sat}(\check\sigma)$ has one arrow and no black nodes.
Hence $\check\sigma=\vartheta$ is a {\sl PI}-involution with $\g^\vartheta\simeq \mathfrak{so}_{2m+2}\dotplus
\mathfrak{so}_{2m}=\mathfrak{so}_{n+1}\dotplus \mathfrak{so}_{n-1}$.  
\\ \indent
For $\sigma\check\sigma$, the contribution from the $\tri$-modules in $\g_1$ is counted with the opposite sign, i.e., $\dim\g_{[0]}-\dim\g_{[1]}=1+2m=\rk(\g)$. This means that $\sigma$ and $\sigma\check\sigma$
are $G$-conjugate.

{\sf (ii)} \  Suppose that $n=2m$. Then the $\tri$-modules in $\g_0$ make zero contribution
to either of the differences, while the contribution from $\tri$-modules in $\g_1$ to~\eqref{eq:differ-2}
and~\eqref{eq:differ-3} equals $n$ and $-n$, respectively. Hence
$\dim\g_{\{0\}}-\dim\g_{\{1\}}=n=\rk\g$ and $\dim\g_{[0]}-\dim\g_{[1]}=-n=-\rk(\g)$.
This means that $\check\sigma$ and $\sigma$ are conjugate, while
$\sigma\check\sigma$ is of maximal rank, i.e.,
$\g^{\sigma\check\sigma}\simeq \son\dotplus\son$.
\end{ex}

\begin{ex}  \label{ex:vartheta}
For the {\sl PI}--involution $\vartheta$ and $\g\ne \GR{A}{2n}$, we have

\textbullet \  \ $\Upsilon(\vartheta)=\check\vartheta\sim\vartheta$ unless
$\g\in\{\GR{A}{4n+1}, \GR{B}{4n+1}, \GR{C}{2n+1}, \GR{D}{4n+2}, \GR{E}{7} \}$;

\textbullet \ \ If $\g=\GR{A}{4n+1}$, then $\g^\vartheta=\GR{A}{2n}\dotplus\GR{A}{2n}\dotplus\te_1$ and
$\g^{\Upsilon(\vartheta)}=\GR{A}{2n+1}\dotplus\GR{A}{2n-1}\dotplus\te_1$;

\textbullet \ \ If $\g=\GR{B}{4n+1}$, then $\g^\vartheta=\GR{B}{2n}\dotplus\GR{D}{2n+1}$ and
$\g^{\Upsilon(\vartheta)}=\GR{B}{2n+1}\dotplus\GR{D}{2n}$;

\textbullet \ \ If $\g=\GR{C}{2n+1}$, then $\g^\vartheta=\mathfrak{gl}_{2n+1}$ and
$\g^{\Upsilon(\vartheta)}=\GR{C}{n+1}\dotplus\GR{C}{n}$;

\textbullet \ \ If $\g=\GR{D}{4n+2}$, then $\g^\vartheta=\GR{D}{2n+1}\dotplus\GR{D}{2n+1}$ and
$\g^{\Upsilon(\vartheta)}=\GR{D}{2n+2}\dotplus\GR{D}{2n}$;

\textbullet \ \ If $\g=\GR{E}{7}$, then $\g^\vartheta=\GR{A}{7}$ and
$\g^{\Upsilon(\vartheta)}=\GR{D}{6}\dotplus\GR{A}{1}$.
\\[.6ex]
For computations in $\GR{E}{7}$ and $\GR{E}{8}$, one can use data from Example~\ref{ex:0=4}(1).
Further calculations show that one always has $\Upsilon^2(\vartheta)\sim\vartheta$. Note however that,
in general,  
$\Upsilon^2(\sigma)$ has no relation to $\sigma\check\sigma$.
\end{ex}
Since $\sigma$ and $\check{\sigma}$ commute, they determine a $\BZ_2\times\BZ_2$-grading or 
{\it quaternionic decomposition} $\g=\bigoplus_{i,j\in\BZ_2\times\BZ_2}\g_{ij}$. We refer 
to~\cite{hg01,TG13,ANT13} for various invariant-theoretic results related to this structure. 
However, it is worth stressing that our construction gives an ordered triple of involutions. For, starting 
from $\check\sigma$ or $\sigma\check{\sigma}$, one usually obtains another triple and a different 
$\BZ_2\times\BZ_2$-grading. 
\\ \indent
As a complement to Theorem~\ref{prop:dostat-del}, we have

\begin{thm}   \label{thm:conjugate}
Suppose that $\sigma\in {\sf Inv}(\g)$, $e\in\g_0\cap\N$ is regular in $\g_0$, and 
$\g=\bigoplus_{i,j\in\BZ\times\BZ_2}\g_j(2i)$ is the related $(\sigma,e)$-grading. If 
$d_0(4k+2)=d_1(4k+2)$ for all $k\in\BZ$, then $\dim\g^\sigma=\dim\g^{\sigma\check\sigma}$.
Moreover, if\/ $\g_0$ is semisimple, then $\sigma$ and $\sigma\check{\sigma}$ are conjugate involutions.
\end{thm}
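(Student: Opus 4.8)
The plan is to translate everything into the $\langle e,h,f\rangle$-module structure of $\g_0$ and $\g_1$ and then feed the outcome into the IBN machinery. Write $\g_0=\sum_{k\ge 0}m_{0,k}\sfr_{2k}$ and $\g_1=\sum_{k\ge 0}m_{1,k}\sfr_{2k}$ as $\langle e,h,f\rangle$-modules (all highest weights are even since $e$ is even in $\g$), and set $a_k=m_{0,k}-m_{1,k}$. Since $\sfr_{2k}$ has a $1$-dimensional weight space of weight $2m$ exactly when $k\ge|m|$, one has $d_j(2m)=\sum_{k\ge|m|}m_{j,k}$, so $d_0(2m)-d_1(2m)=\sum_{k\ge|m|}a_k$. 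Using $d_j(i)=d_j(-i)$, the hypothesis $d_0(4k+2)=d_1(4k+2)$ says precisely that $\sum_{l\ge m}a_l=0$ for every odd $m\ge 1$. First I would subtract the relations for $m=2j-1$ and $m=2j+1$ to obtain the key telescoping identity
\[
   a_{2j-1}+a_{2j}=0 \qquad\text{for all } j\ge 1 .
\]

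Next I would compute the two dimension differences. Directly, $\dim\g_0-\dim\g_1=\sum_k(2k+1)\,a_k$, while Lemma~\ref{lm:new-differ} gives $\dim\g_{[0]}-\dim\g_{[1]}=\sum_k(-1)^k a_k$. The point is that $(2k+1)-(-1)^k$ equals $4j$ for \emph{both} $k=2j-1$ and $k=2j$, and vanishes for $k=0$, so
\[
 (\dim\g_0-\dim\g_1)-(\dim\g_{[0]}-\dim\g_{[1]})
 =\sum_{k\ge 1}\bigl((2k+1)-(-1)^k\bigr)a_k
 =\sum_{j\ge 1}4j\,(a_{2j-1}+a_{2j})=0
\]
by the telescoping identity. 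Since $\dim\g_0+\dim\g_1=\dim\g_{[0]}+\dim\g_{[1]}=\dim\g$, this yields $\dim\g^\sigma=\dim\g_0=\dim\g_{[0]}=\dim\g^{\sigma\check\sigma}$, which is the first assertion.

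For the conjugacy statement I would assume $\g_0$ semisimple and exploit the fact that this upgrades the $k=0$ instance of the hypothesis into the hypothesis of Corollary~\ref{cor:IBN}. Indeed, a regular $e$ in a semisimple $\g_0$ is principal, so $\g_0(0)$ is a Cartan subalgebra of $\g_0$ and $\g_0(2)=\bigoplus_{\ap\in\Pi(\g_0)}(\g_0)_\ap$; hence $d_0(0)=d_0(2)=\rk(\g_0)$. The case $k=0$ of the hypothesis is $d_0(2)=d_1(2)$, so $d_0(0)=d_1(2)$, and Corollary~\ref{cor:IBN} shows that $\mathsf{Sat}(\sigma)$ has {\sl IBN}. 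On the other hand, $\mathsf{Sat}(\sigma\check\sigma)$ has {\sl IBN} by Proposition~\ref{prop:IBN2}.

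Finally, the first part gives $\dim\g^\sigma=\dim\g^{\sigma\check\sigma}$, hence $\dim\g_1^{(\sigma)}-\dim\g_0^{(\sigma)}=\dim\g_1^{(\sigma\check\sigma)}-\dim\g_0^{(\sigma\check\sigma)}$, since the total dimension is fixed. By Lemma~\ref{lm:raznost-IBN} this quantity is a complete invariant of {\sl IBN}\/ Satake diagrams, so $\mathsf{Sat}(\sigma)=\mathsf{Sat}(\sigma\check\sigma)$; as two involutions of $\g$ with the same Satake diagram are $G$-conjugate, $\sigma$ and $\sigma\check\sigma$ are conjugate. The part I expect to require the most care is the reduction of the hypothesis to $a_{2j-1}+a_{2j}=0$ together with the arithmetic identity that makes the difference collapse exactly onto these pairings; the conceptual crux of the ``moreover'' clause is the observation that semisimplicity of $\g_0$ turns $d_0(2)=d_1(2)$ into $d_0(0)=d_1(2)$, which is precisely what unlocks {\sl IBN} for $\sigma$ itself.
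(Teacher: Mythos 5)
Your proof is correct. The ``moreover'' part is essentially the paper's own argument: semisimplicity of $\g_0$ gives $d_0(0)=d_0(2)=\rk(\g_0)$, the $k=0$ case of the hypothesis upgrades this to $d_0(0)=d_1(2)$, Corollary~\ref{cor:IBN} and Proposition~\ref{prop:IBN2} then give {\sl IBN} for both $\mathsf{Sat}(\sigma)$ and $\mathsf{Sat}(\sigma\check\sigma)$, and the dimension equality forces conjugacy --- you merely make explicit the appeal to Lemma~\ref{lm:raznost-IBN} that the paper leaves implicit, which is a small improvement in rigour. Where you genuinely diverge is the first assertion. The paper reads it off directly from \eqref{eq:sigma3}: since $\g^\sigma=\bigoplus_k\g_0(4k)\oplus\bigoplus_k\g_0(4k{+}2)$ and $\g^{\sigma\check\sigma}=\bigoplus_k\g_0(4k)\oplus\bigoplus_k\g_1(4k{+}2)$ share the summands $\g_0(4k)$ and differ only by exchanging $\g_0(4k{+}2)$ with $\g_1(4k{+}2)$, one has the identity $\dim\g^\sigma-\dim\g^{\sigma\check\sigma}=\sum_k\bigl(d_0(4k{+}2)-d_1(4k{+}2)\bigr)$, which the hypothesis kills on the spot. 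You instead pass to $\tri$-multiplicities, invoke Lemma~\ref{lm:new-differ}, and run a telescoping argument; all of your identities check out ($d_j(2m)=\sum_{k\ge|m|}m_{j,k}$, the reduction of the hypothesis to $a_{2j-1}+a_{2j}=0$, and $(2k{+}1)-(-1)^k=4j$ for $k\in\{2j{-}1,2j\}$), so the route is valid, but it is longer and somewhat hides the structural reason the statement is true. What the paper's approach buys is precisely this general defect formula, valid without any hypothesis, which it reuses afterwards (e.g.\ for $\g=\GR{B}{4n+2}$, where the formula produces $\dim\g^{\sigma\check\sigma}-\dim\g^\sigma=2$ from the failure of the hypothesis at $k=n,-n-1$). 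What your route buys is an equivalent reformulation of the hypothesis as the pairing relations $m_{0,2j-1}+m_{0,2j}=m_{1,2j-1}+m_{1,2j}$ on $\lg e,h,f\rg$-module multiplicities, which is convenient when (as in Example~\ref{ex:0=4} or Remark~\ref{rmk:differ-classic}) the module structure is what one actually computes.
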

\begin{proof}
By Proposition~\ref{thm:cox-even}, $e$ remains even in $\g$, which justifies our notation for the mixed grading. Recall that
\\[.6ex]
\centerline{ 
$\g^\sigma=\g_0=\bigoplus_{i\in \BZ}\g_0(2i)$, \ 
$\g^{\check\sigma}=\bigoplus_{i\in \BZ}\g(4i)$, and \ 
$\g^{\sigma\check\sigma}=\bigoplus_{i\in \BZ}\g_{\bar i}(2i)$\ ,}
\\[.6ex]
where $\bar i$ is the image of $i$ in $\BZ/2\BZ$.  Hence 
$\dim\g^\sigma-\dim\g^{\sigma\check\sigma}=\sum_{k\in\BZ} (d_0(4k+2)-d_1(4k+2))$, which proves the first assertion.
\\ \indent
If $\g_0$ is semisimple and $e\in\g_{0,\sf reg}$, then $d_0(0)=d_0(2)=\rk(\g_0)$. Hence 
$d_0(0)=d_1(2)$. It then follows from Corollary~\ref{cor:IBN} 
that $\mathsf{Sat}(\sigma)$ has {\sl IBN}. Since $\dim\g^\sigma=\dim\g^{\sigma\check\sigma}$ and both involutions have only {\sl IBN} in their Satake diagrams, they must be $G$-conjugate.
\end{proof}

\begin{rmk}
Actually, the semisimplicity of $\g_0$ is not necessary, as long as we 
know that $\mathsf{Sat}(\sigma)$ has only {\sl IBN}. The equalities $d_0(4k+2)=d_1(4k+2)$ are often
related to the fact that $G{\cdot}e$ is divisible and $G{\cdot}\edva\cap\g_1(4)\ne\varnothing$, cf. the
proof of Theorem~\ref{prop:dostat-del}. Hence Theorem~\ref{thm:conjugate} applies if
$d_0(0)=d_1(4)$. But there are many other cases, where $\sigma\sim\sigma\check{\sigma}$, see
e.g. Example~\ref{ex:Dn-hermit} with $n$ odd. 
\end{rmk}

\begin{ex}
It can happen that $\sigma=\vartheta_{\sf max}$ with $\g^\sigma$ semisimple, but the involutions 
$\sigma$ and $\sigma\check\sigma$ are not $G$-conjugate. Suppose that $\g=\GR{B}{4n+2}$ and 
$\sigma$ is of maximal rank, which is also a {\sl PI\/}--involution here. Then
$\g_0=\g^\sigma\simeq\g^{\check\sigma}\simeq \GR{B}{2n+1}\dotplus\GR{D}{2n+1}$, but 
$\g^{\sigma\check\sigma}=\GR{B}{2n}\dotplus\GR{D}{2n+2}$. The reason is that $G{\cdot}e$ is not 
divisible for $e\in\g_{0,\sf reg}$ and the equality $d_0(4k+2)=d_1(4k+2)$ fails exactly for 
$k=n, -n-1$. This leads to the relation $\dim\g^{\sigma\check\sigma}-\dim\g^\sigma=2$.
\end{ex}

\begin{ex}  \label{ex:three-non-conj}
There is an interesting case in which three involutions $\sigma, \check\sigma$, and 
$ \sigma\check\sigma$ are pairwise non-conjugate. Let 
$\sigma\in\mathsf{Inv}(\g)$ be a {\it diagram involution}\/ for $\g=\GR{A}{2n-1}$, $\GR{D}{n}$, and 
$\GR{E}{6}$. (See~\cite[Ch.\,8]{kac} for diagram automorphisms of simple Lie algebras.) Then 
$\sigma$ is outer, $\g^\sigma$ is simple, and if $e\in\g_{0,\sf reg}$, then $e\in\g_{\sf reg}$. 
By the above ``first properties'',  $\check\sigma$ is inner and $\sigma\check{\sigma}$ is outer.
Since $e\in\g_1^{(\check\sigma)}$, this implies that $\check\sigma=\vartheta$ is a {\sl PI}--involution.
Furthermore, $\sigma\check{\sigma}$ is the unique, up to $G$-conjugacy, outer involution in the
connected component of $\mathsf{Aut}(\g)$ containing $\sigma$ that has the property that
$e\in\g_1^{(\sigma\check\sigma)}$, i.e., the $(-1)$-eigenspace of the outer involution
$\sigma\check\sigma$ contains a regular nilpotent element of $\g$. If $\vartheta$ is not of maximal rank, 
then $\sigma\check\sigma$ appears to be of maximal rank. Below is the table with fixed point 
subalgebras for this triple of involutions, where $\sigma\check\sigma\not\sim \vartheta_{\sf max}$ only 
for $\g=\GR{D}{2n}$.

\begin{table}[ht]
\caption{Diagram involutions $\sigma$ and related triples}   \label{table:tri}
\begin{center}
\begin{tabular}{>{$}l<{$}| >{$}l<{$} >{$}l<{$} >{$}l<{$}|}
\g & \g^{\sigma} & \g^{\check\sigma}=\g^\vartheta & \g^{\sigma\check\sigma} \\ \hline\hline
\GR{A}{2n-1} & \GR{C}{n} & \GR{A}{n-1}\dotplus\GR{A}{n-1}\dotplus\te_1 & \GR{D}{n} \\
\GR{D}{2n} & \GR{B}{2n-1} & \GR{D}{n}\dotplus\GR{D}{n} & \GR{B}{n}\dotplus\GR{B}{n-1} \\
\GR{D}{2n+1} & \GR{B}{2n} & \GR{D}{n}\dotplus\GR{D}{n+1} & \GR{B}{n}\dotplus\GR{B}{n} \\
\GR{E}{6} &  \GR{F}{4} & \GR{A}{5}\dotplus\GR{A}{1} &  \GR{C}{4}  \\
\hline
\end{tabular}
\end{center}
\end{table}

For $\g=\GR{A}{2n}$, the diagram involution $\sigma$ is of maximal rank. Here this procedure provides
$G$-conjugate involutions $\sigma$ and $\sigma\check\sigma$.
\end{ex}


\begin{thebibliography}{P95}

\bibitem{an} {\rusc L.V.~Antonyan}. 
{\rus O klassifikacii odnorodnykh {e1}lementov ${\BZ}_2$-graduirovannykh poluprostykh algebr Li},
{\rusi  Vestnik Mosk. Un-ta, Ser. Matem. Meh.}, {\rus N0}\,2 (1982), 29--34 (Russian). 
English translation: {\sc L.V.~Antonyan}. On classification of
homogeneous elements of ${\BZ}_2$-graded semisimple Lie algebras,
{\it  Moscow Univ. Math. Bulletin}, {\bf 37}\,(1982), {\rus N0}\,2, 36--43.

\bibitem{br94} {\sc A.~Broer.} 
Normality of some nilpotent varieties and cohomology of line bundles on the cotangent bundle 
of the flag variety, In: J.-L.~Brylinski, R.~Brylinski, V.~Guillemin and V.~Kac (Eds.)
``{Lie Theory and Geometry. In honor of Bertram Kostant}'',
Progr. Math. {\bf 123}, 1--19, Birkh\"auser Boston, 1994. 

\bibitem{CM} {\sc D.H.~Collingwood} and {\sc W.~McGovern}.
{\it ``Nilpotent orbits in semisimple Lie algebras''},  New York:
Van Nostrand Reinhold, 1993.

\bibitem{Dy52} {\rusc E.B.~Dynkin}. {\rus Poluprostye podalgebry poluprostykh algebr Li}, 
{\rusi  Matem.\ Sbornik}, {\rus t.}{\bf 30}, {\rus N0}\,2 (1952), 349--462 (Russian). 
English translation: {\sc E.B.~Dynkin}.
Semisimple subalgebras of semisimple Lie algebras, {\it  Amer. Math. Soc. Transl.}, 
II~Ser., {\bf 6}\,(1957), 111--244.

\bibitem{alela} {\sc A.G.\,Elashvili}.  The centralizers of nilpotent elements in 
semisimple Lie algebras, {\it Trudy Razmadze Matem. Inst.}~(Tbilisi)  
{\bf 46}\,(1975), 109--132 (Russian). (\href{http://www.ams.org/mathscinet-getitem?mr=0393148}{MR0393148})

\bibitem{hg01}  {\sc A.\,Helminck} and {\sc G.\,Schwarz}.
Orbits and invariants associated with a pair of commuting involutions,
{\it Duke Math. J.}, {\bf 106}\,(2001), 237--279.

\bibitem{kac}  {\sc V.G.~Kac}.
``Infinite-dimensional Lie algebras'', 3rd edition. 
Cambridge University Press, Cambridge, 1990. xxii+400 pp.

\bibitem{ke76} {\sc G.~Kempf}.
On the collapsing of homogeneous bundles, {\it Invent. Math.}, {\bf 37}, no.\,3~(1976), 229--239.

\bibitem{ko59} {\sc B.~Kostant}.  
The principal three-dimensional subgroup and the Betti numbers of a complex simple 
Lie group,  {\it  Amer. J. Math.}, {\bf 81}\,(1959), 973--1032.

\bibitem{kr71} {\sc B.~Kostant} and {\sc S.~Rallis}. 
Orbits and representations associated with symmetric spaces, 
{\it Amer. J. Math.}, {\bf 93}\,(1971), 753--809.

\bibitem{law95} {\sc R.~Lawther}. 
Jordan block sizes of unipotent elements in exceptional algebraic groups, 
{\it Comm. Alg.},  {\bf 23}\,(1995), 4125--4156.

\bibitem{luna} {\sc D.~Luna}. 
Slices \`{e}tales, {\it Bull. Soc. Math. France},
Memoire {\bf 33}\,(1973), 81--105.

\bibitem{theta05} {\sc D.~Panyushev}.
On invariant theory of $\theta$-groups,
{\it J. Algebra}, {\bf 283}\,(2005), 655--670.

\bibitem{divis} {\sc D.~Panyushev}.
On divisible weighted Dynkin diagrams and reachable elements,
{\it Transformation Groups}, {\bf 15}, no.\,4~(2010), 983--999.

\bibitem{TG13} {\sc D.~Panyushev}. 
Commuting involutions and degenerations of isotropy representations,
{\it Transformation Groups}, {\bf 18}, no.\,2~(2013), 507--537. 

\bibitem{ANT13} {\sc D.~Panyushev}.
Commuting involutions of Lie algebras, commuting varieties, and simple Jordan algebras,
{\it Algebra Number Theory}, {\bf 7}, no.\,6~(2013), 1505--1534.

\bibitem{BSM14} {\sc D.~Panyushev} and {\sc O.~Yakimova}. 
On maximal commutative subalgebras of Poisson algebras associated with involutions of semisimple
Lie algebras, {\it Bull. Sci. Math.}, {\bf 138}, no.\,6~(2014), 705--720.

\bibitem{som05} {\sc E.~Sommers}. $B$-stable ideals in the nilradical of a
Borel subalgebra, {\it Canad. Math. Bull.}, {\bf 48}\,(2005),  460--472.

\bibitem{spr77} {\sc T.A.~Springer}. 
``{\it Invariant theory}''. Lecture Notes Math., vol. {\bf 585}. Springer--Verlag, Berlin--New York, 
1977. iv+112 pp.

\bibitem{spr87} {\sc T.A.~Springer}. 
The classification of involutions of simple algebraic groups, 
{\it J. Fac. Sci. Univ. Tokyo} Sect. IA Math. {\bf 34}\,(1987), 655--670.

\bibitem{ss} {\sc T.A.~Springer} and {\sc R.~Steinberg}.
Conjugacy Classes, In: {\it ``Seminar on algebraic and related finite groups''},
Lecture Notes Math., Berlin: Springer, {\bf 131}\,(1970), 167--266.

\bibitem{vi76}
{\rusc {E1}.B.~Vinberg}. {\rus Gruppa Ve\u\i lya 
graduirovanno{\u\i} algebry Li}, {\rusi Izv. AN SSSR. Ser. Matem.} 
{\bf 40}, {\rus N0}\,3 (1976), 488--526 (Russian). English translation: 
{\sc E.B.\,Vinberg}. The Weyl group of a graded Lie algebra,
{\it Math. USSR-Izv.} {\bf 10}\,(1976), 463--495.

\bibitem{vi79}
{\rusc {E1}.B.~Vinberg.} {\rus Klassifikatsiya odnorodnykh  nil{\cprime}potentnykh
{e1}lementov poluprosto{\u\i} gra\-du\-iro\-vanno{\u\i} algebry Li, \ V sb.:}
{\rusi "Trudy seminara po vekt. i tenz. analizu"}, {\rus t.}\,{\bf 19}, {\rus str.}~155--177. 
{\rus Moskva: MGU} 1979 (Russian). 
English translation: {\sc E.B.~Vinberg}. 
Classification of homogeneous nilpotent elements of a semisimple graded Lie algebra,
{\it Selecta Math. Sov.}, {\bf 6}\,(1987), 15--35.

\bibitem{t41}
{\rusc {E1}.B.~Vinberg, V.V.~Gorbatseviq, A.L.~Oniwik}.
``{\rusi Gruppy i algebry Li - 3}'', {\rus  
Sovrem. probl. matematiki. Fundam. napravl., t.\,{\bf 41}.
Moskva: VINITI} 1990 (Russian).
English translation: V.V.\,Gorbatsevich, A.L.\,Onishchik and E.B.\,Vinberg.
``{\it Lie Groups and Lie Algebras}" III
(Encyclopaedia Math. Sci., vol.~{\bf 41}) Berlin: Springer 1994.

%\bibitem{t41}
%{\sc V.V.\,Gorbatsevich, A.L.\,Onishchik} and {\sc E.B.\,Vinberg}. ``Lie Groups and 
%Lie Algebras" III (Encyclopaedia Math. Sci., vol.~41) Berlin Heidelberg New York: 
%Springer 1994.

\end{thebibliography}
\end{document}